\newcommand{\sect}[1]{\section{#1}\setcounter{equation}{0}}
\font\mbn=msbm10 scaled \magstep1
\font\mbs=msbm7 scaled \magstep1
\font\mbss=msbm5 scaled \magstep1
\newtheorem{Th}{Theorem}[section]
\newtheorem{Lm}[Th]{Lemma}
\newtheorem{C}[Th]{Corollary}
\newtheorem{D}[Th]{Definition}
\newtheorem{Proposition}[Th]{Proposition}
\newtheorem{R}[Th]{Remark}
\newtheorem{Problem}[Th]{Problem}
\newtheorem{E}[Th]{Example}
\begin{document}
\title[On Bernstein Classes of Quasianalytic Maps]{On Bernstein Classes of Quasianalytic Maps}

\author{Alexander Brudnyi} 
\address{Department of Mathematics and Statistics\newline
\hspace*{1em} University of Calgary\newline
\hspace*{1em} Calgary, Alberta\newline
\hspace*{1em} T2N 1N4}
\email{albru@math.ucalgary.ca}
\keywords{Bernstein class, tensor product of Banach spaces, Hausdorff measure, quasianalytic function}
\subjclass[2010]{Primary 41A65. Secondary 30D60.}

\thanks{Research supported in part by NSERC}

\begin{abstract}
We study the structure of families of ``well approximable'' elements of tensor products of Banach spaces including analogs of the classical quasianalytic classes in the sense of Bernstein and Beurling. As in the case of quasianalytic functions, we prove for members of these families variants of
the Mazurkiewicz and Markushevich theorems and in some particular cases, if such elements are Banach-valued continuous maps on a compact metric space, estimate massivity of their graphs and level sets. 
 \end{abstract}

\date{}

\maketitle

\sect{Introduction}
Let $A$ be an infinite dimensional separable Banach space over field $\mathbb F$ ($\cong\mathbb R$ or $\mathbb C$) and $V_0 \subsetneq V_1 \subsetneq \cdots \subsetneq V_i \subset \cdots \subset A$ be an {\em approximating family} of subspaces, i.e., $V:=\cup_i V_i$ is dense in $A$  and ${\rm dim}_{\mathbb F} V_i<\infty$ for all $i$. Let $B$ be a Banach space over $\mathbb F$ and $(A\hat{\otimes}_\alpha B,\|\cdot\|_{A\hat\otimes_\alpha B})$ a tensor product of Banach spaces  defined by means of a reasonable crossnorm  $\alpha$ on $A\otimes B$ (see, e.g., \cite{R} and subsection~2.1 below for basic results and definitions). 

For $x\in A\hat{\otimes}B$, we set
\begin{equation}\label{ba}
E_n(x):=\inf_{h\in V_n\otimes B}\|x-h\|_{A\hat\otimes_\alpha B}.
\end{equation}
Clearly, $\{E_n(x)\}_{n\in\mathbb Z_+}\subset\mathbb R_+$ is a nonincreasing sequence tending to $0$ as $n\rightarrow\infty$. 
\smallskip
\begin{D}\label{def1}
Let functions $\kappa, \varphi :\mathbb N\rightarrow (0,\infty)$ be such that $\sum_{n=1}^\infty\kappa(n)<\infty$ and  $\varphi$ is nonincreasing with $\lim_{n\rightarrow\infty}\varphi(n)=0$. Adapting the terminology of Beurling \cite{Be}, we say that an element $x\in  A\hat{\otimes}_\alpha B$ belongs to the first Bernstein class 
$\mathfrak{B}_\kappa^*((A,V);B;\alpha)$ if
\begin{equation}\label{beurling}
\prod_{n=1}^\infty (E_n(x))^{\kappa(n)}=0.
\end{equation}

An element $x\in  A\hat{\otimes}_\alpha B$ is said to belong to the second Bernstein class $\mathfrak{B}_{\varphi}((A,V);B;\alpha)$ if 
\begin{equation}\label{scale}
\varliminf_{n\rightarrow\infty} (E_n(x))^{\varphi(n)}<1.
\end{equation}
\end{D}
In what follows we call $\kappa$ in \eqref{beurling} a weight function and $\varphi$ in \eqref{scale} a scale function.

We set  
\[
\begin{array}{c}
\displaystyle
\mathfrak{B}((A,V);B;\alpha):=\mathfrak{B}_{\varphi_0}((A,V);B;\alpha),\quad {\rm where} \quad\varphi_0(n):=\frac 1n,\quad n\in\mathbb N,\medskip\\
\displaystyle
\mathfrak{B}^*((A,V);B;\alpha):=\mathfrak{B}_{\kappa_0}^*((A,V);B;\alpha),\quad {\rm where}\quad \kappa_0(n):=\frac{1}{n^2},\quad n\in\mathbb N.
\end{array}
\]
(In Theorem \ref{prop2.2}  below we show that $\mathfrak{B}((A,V);B;\alpha)\subset \mathfrak{B}^*((A,V);B;\alpha)$.)  

Also, if $B=\mathbb F$, then $A\hat\otimes_\alpha B=A$, and we set 
\[
\mathfrak{B}_\varphi(A,V):=\mathfrak{B}_\varphi((A,V);B;\alpha)\quad {\rm and}\quad \mathfrak{B}_\kappa^*(A,V):=\mathfrak{B}_\kappa^*((A,V);B;\alpha).
\]

For instance, if $A=C(I)$ is the Banach space of complex-valued continuous functions on a compact interval $I\subset\mathbb R$, $V$ is the restriction to $I$ of the space of univariate complex polynomials so that $V_i$ consists of restrictions of polynomials of degree at most $i$, and $B=\mathbb C$, then $\mathfrak{B}(A,V)$ coincides with $\mathfrak{B}(I)$, the {\em set of quasianalytic functions in the sense of Bernstein} \cite{B1, B2}, having the following properties:\smallskip

\noindent (1) (Szmuszkowicz\'{o}wna \cite{S}) \smallskip

{\em The zero locus of each $f\not\equiv 0$ in $\mathfrak{B}(I)$ has transfinite diameter $0$.} 

\smallskip

\noindent (2) (Mazurkiewicz \cite{Ma})\smallskip

 {\em $\mathfrak{B}(I)$ is comeager (i.e., the intersection of countably many sets with dense interiors)  in $C(I)$.}
 
 \smallskip
 
 \noindent (3) (Markushevich \cite{M}) \smallskip
 
 {\em Each $f\in C(I)$ can be written as $f=f_1+f_2$, where 
 $f_1,f_2\in\mathfrak{B}(I)$.}

 \smallskip
 
 \noindent (In fact, as has been observed by Ple\'{s}niak \cite{P1},  (3) is a simple corollary of (2).)\smallskip
 
 Similarly to property (1) functions of the classical first Bernstein class (introduced by Beurling) satisfy:\smallskip

\noindent  (4)   (Beurling \cite{Be}) \smallskip

{\em The zero locus of each $f\not\equiv 0$ in $\mathfrak{B}^*(C(I),V)$ has Lebesgue measure $0$. }\smallskip

 In the classical setting of univariate complex-valued continuous functions, the most general quasianalytc class (including classes $\mathfrak B(I)$ and $\mathfrak{B}^*(C(I),V)$) was introduced and studied by Beurling \cite{Be}. In turn, a unified exposition of the basic concepts and facts concerning quasianalytic functions in the sense of Bernstein defined on compact subsets of $\mathbb C^n$ was presented by Ple\'{s}niak in \cite{P2}. In this paper we continue this line of research and study some general properties of classes $\mathfrak{B}_\varphi((A,V);B;\alpha)$ and  $\mathfrak{B}_\kappa^*((A,V);B;\alpha)$ focusing on the properties similar to (1)--(4) above.\smallskip

The paper is organized as follows. Section 2 is devoted to the formulation and discussion of main results accompanied with the corresponding examples and open problems. Specifically, in subsection~2.1 we formulate some set theoretic and algebraic properties of Bernstein classes $\mathfrak{B}_\kappa^*$ and $\mathfrak{B}_\varphi$ (e.g., we show that sets $\mathscr B^*$ and $\mathscr B$ of all pairwise distinct first and second Bernstein classes form isomorphic partially ordered abelian semigroups with addition given by the union of classes). Subsection~2.2 deals with noncommutative Banach-valued analogs of Markushevich and Mazurkiewicz theorems. For example, our results imply that every invertible continuous matrix function $f:[0,1]\rightarrow G$, where $G$ is  a matrix Lie subgroup of $GL_n(\mathbb F)$, can be presented as $f=f_1\cdot f_2$, where $f_i, f_i^{-1}: [0,1]\rightarrow G$, $i=1,2$,  are  matrix functions with entries quasianalytic in the sense of Bernstein if $G=GL_n(\mathbb F)$ (Theorem \ref{te1}) or belonging to the first Bernstein class $\mathfrak{B}^*(C(I),V)$ in the general case (Theorem \ref{mar2}). Subsection~2.3 is devoted to some properties of elements of classes  $\mathfrak{B}_\varphi$ and $\mathfrak{B}_\kappa^*$ for  spaces $C(M;B)$ of continuous $B$-valued functions on a compact metric space $(M,d)$. For instance, we show that for $M=[0,1]\subset\mathbb R$  graphs of $B$-valued continuous functions  in the first Bernstein class 
$\mathfrak{B}^*$ has Hausdorff dimension $1$ (while  if ${\rm dim}_{\mathbb F} B=\infty$ there are functions with graphs having Hausdorff dimension $\infty$, say, graphs of Peano curves; note that,  due to results of subsection~2.2,  such functions can be presented as sums of  two functions in the second Bernstein class $\mathfrak{B}$). In this setting,  we also formulate some results on massivity of level sets of functions in $\mathfrak{B}_\varphi$ and $\mathfrak{B}^*_\kappa$. Sections 3 -- 5 contain proofs of our main results.

\sect{Main Results}
\subsection{Basic Properties}
Recall that a tensor product $A\hat\otimes_\alpha B$ is the completion of the algebraic tensor product $A\otimes B$ with respect to a {\em reasonable crossnorm} $\alpha$, i.e., such that
\[
\begin{array}{c}
\displaystyle
\alpha(a\otimes b):=\|a\|_A\cdot\|b\|_B\quad {\rm for\ all}\quad a\in A,\ b\in B\quad {\rm and}\medskip
\\
\displaystyle
\quad  \alpha'(a^*\otimes b^*)=\|a^*\|_{A^*}\cdot\|b^*\|_{B^*}\quad {\rm for\ all}\quad a\in A^*,\ b^*\in B^*,
\end{array}
\]
where $\alpha'$ is the dual norm of $\alpha$. 

A {\em uniform crossnorm} is  a method of ascribing to each pair $(X, Y)$ of Banach spaces a reasonable crossnorm  $\alpha$ on $X \otimes Y$ so that if $X, W, Y, Z$ are arbitrary Banach spaces then for all bounded linear operators $S: X \rightarrow W$ and $T: Y \rightarrow Z$ the linear operator $S \otimes T : X \otimes_\alpha Y \rightarrow W \otimes_\alpha Z$ is bounded and $\|S \otimes T\| \leq \|S\|\cdot \|T\|$.\smallskip

Our first result follows directly from Definition \ref{def1}.
\begin{Proposition}\label{prop1}
Suppose $((A,V);B)$ and $((A',V');B')$ are triples of Banach spaces and their subspaces subject to Definition \ref{def1} and 
$S:A\rightarrow A'$, $T:B\rightarrow B'$ are bounded linear operators such that $S(V_n)\subset V_n'$ for all sufficiently large $n\in\mathbb N$.
Then for a uniform crossnorm $\alpha$ and all scale and weight functions $\varphi$ and $\kappa$ the linear operator $S\otimes T$ maps  $\mathfrak{B}_{\varphi}((A,V);B;\alpha)$ into  $\mathfrak{B}_{\varphi}((A',V');B';\alpha)$ and $\mathfrak{B}_\kappa^*((A,V);B;\alpha)$ into  $\mathfrak{B}_\kappa^*((A',V');B';\alpha)$.
\end{Proposition}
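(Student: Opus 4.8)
The plan is to derive everything from a single monotonicity estimate comparing the quantities $E_n$ before and after applying $S\otimes T$. First I would invoke the hypothesis that $\alpha$ is a uniform crossnorm: by definition this means the algebraic operator $S\otimes T\colon A\otimes_\alpha B\to A'\otimes_\alpha B'$ is bounded with $\|S\otimes T\|\le\|S\|\cdot\|T\|$, hence it extends uniquely to a bounded linear operator $S\otimes T\colon A\hat\otimes_\alpha B\to A'\hat\otimes_\alpha B'$ with the same norm bound. If $\|S\|\cdot\|T\|=0$ then $S\otimes T=0$, so $(S\otimes T)x$ lies in every Bernstein class and there is nothing to prove; thus I may set $C:=\|S\|\cdot\|T\|>0$.

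Next, choose $N$ so large that $S(V_n)\subseteq V_n'$ for all $n\ge N$. For such $n$ and any $h=\sum_i a_i\otimes b_i\in V_n\otimes B$ (with $a_i\in V_n$) one has $(S\otimes T)h=\sum_i (Sa_i)\otimes(Tb_i)\in V_n'\otimes B'$; since the infimum defining $E_n((S\otimes T)x)$ ranges over all of $V_n'\otimes B'$, it is bounded above by the infimum over the subfamily $\{(S\otimes T)h:h\in V_n\otimes B\}$, which gives
\[
E_n\bigl((S\otimes T)x\bigr)\ \le\ \inf_{h\in V_n\otimes B}\bigl\|(S\otimes T)(x-h)\bigr\|_{A'\hat\otimes_\alpha B'}\ \le\ \|S\otimes T\|\cdot E_n(x)\ \le\ C\,E_n(x)
\]
for every $x\in A\hat\otimes_\alpha B$ and every $n\ge N$. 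This inequality is essentially the whole content of the argument; the rest is bookkeeping.

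For the second Bernstein class, raising the estimate to the power $\varphi(n)$ gives $\bigl(E_n((S\otimes T)x)\bigr)^{\varphi(n)}\le C^{\varphi(n)}\bigl(E_n(x)\bigr)^{\varphi(n)}$ for $n\ge N$; since $\varphi(n)\to0$ we have $C^{\varphi(n)}\to1$, so passing to a subsequence realizing $\varliminf_{n\to\infty}(E_n(x))^{\varphi(n)}$ yields $\varliminf_{n\to\infty}\bigl(E_n((S\otimes T)x)\bigr)^{\varphi(n)}\le\varliminf_{n\to\infty}\bigl(E_n(x)\bigr)^{\varphi(n)}<1$, hence $(S\otimes T)x\in\mathfrak{B}_\varphi((A',V');B';\alpha)$. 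For the first Bernstein class, raising to the power $\kappa(n)$ and multiplying over $n\ge N$ gives
\[
\prod_{n=N}^{\infty}\bigl(E_n((S\otimes T)x)\bigr)^{\kappa(n)}\ \le\ C^{\sum_{n\ge N}\kappa(n)}\prod_{n=N}^{\infty}\bigl(E_n(x)\bigr)^{\kappa(n)},
\]
with finite exponent $\sum_{n\ge N}\kappa(n)$. If $E_m(x)=0$ for some $m$, then by monotonicity $E_n(x)=0$, hence $E_n((S\otimes T)x)=0$, for all $n\ge\max\{m,N\}$, so the product vanishes; otherwise all $E_n(x)>0$, the finite product $\prod_{n<N}(E_n(x))^{\kappa(n)}$ is positive, and $\prod_{n=1}^\infty(E_n(x))^{\kappa(n)}=0$ forces $\prod_{n=N}^\infty(E_n(x))^{\kappa(n)}=0$, whence $\prod_{n=N}^\infty(E_n((S\otimes T)x))^{\kappa(n)}=0$. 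In either case, multiplying by the finite product over $n<N$ gives $\prod_{n=1}^\infty(E_n((S\otimes T)x))^{\kappa(n)}=0$, i.e.\ $(S\otimes T)x\in\mathfrak{B}_\kappa^*((A',V');B';\alpha)$.

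There is no genuine obstacle here, which is precisely why the proposition is advertised as a direct consequence of Definition \ref{def1}. The two points that do deserve attention are: verifying that the uniform-crossnorm hypothesis is exactly what supplies both the operator norm bound $\|S\otimes T\|\le\|S\|\|T\|$ and the extension of $S\otimes T$ to the completed tensor products; and the elementary but slightly fussy bookkeeping with the infinite product when some $E_n(x)$ vanishes, together with the remark that both $\varliminf$ and $\prod_{n\ge1}$ depend only on the tail of $\{E_n\}$, so the hypothesis that $S(V_n)\subseteq V_n'$ merely for all sufficiently large $n$ already suffices.
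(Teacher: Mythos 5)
Your proposal is correct and follows essentially the same route as the paper: the single estimate $E_n((S\otimes T)x)\le\|S\|\,\|T\|\,E_n(x)$ for large $n$, followed by $C^{\varphi(n)}\to1$ for the class $\mathfrak{B}_\varphi$ and the finiteness of $\sum_{n}\kappa(n)\ln(\|S\|\,\|T\|)$ for the class $\mathfrak{B}^*_\kappa$ (the paper phrases the latter via logarithms rather than the infinite product, which is an immaterial difference). Your extra bookkeeping for the cases $\|S\|\,\|T\|=0$ and $E_m(x)=0$ is harmless and slightly more careful than the paper's version.
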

In particular, for $A'=A$, $V'=V$ and $B'=\mathbb F$ each bounded linear functional $h\in B^*$ determines the bounded linear map ${\rm Id}\otimes h: A\hat\otimes_\alpha B\rightarrow A\,  (= A\otimes \mathbb F)$ which sends  $\mathfrak{B}_{\varphi}((A,V);B;\alpha)$ into  $\mathfrak{B}_{\varphi}(A,V)$ and  $\mathfrak{B}_\kappa^*((A,V);B)$ into  $\mathfrak{B}_\kappa^*(A,V)$.\smallskip

Next, we describe some set theoretic and algebraic properties of classes $\mathfrak{B}_\varphi$ and $\mathfrak{B}_\kappa^*$.

Let $\mathscr C$ be the set of all quadruples $((A,V);B;\alpha)$, where $A,V,B$ are Banach spaces and their subspaces subject to Definition \ref{def1}, and $\alpha:=\alpha(A,B)$ are reasonable crossnorms on $A\otimes B$.
\begin{Th}\label{prop2.1}
For every $Q=((A,V);B;\alpha)\in \mathscr C$ we have
\smallskip

\noindent {\rm (1)} $\mathfrak{B}_{\varphi}(Q)$ is comeager in $A\hat\otimes_\alpha B$ for each scale function $\varphi$.\smallskip

\noindent {\rm (2)}
\[
A\hat\otimes_\alpha B=\bigcup_\varphi \mathfrak{B}_\varphi(Q)\quad {\rm and} \quad V\otimes B=\bigcap_\varphi \mathfrak{B}_\varphi(Q),
\]
where the union and the intersection are taken over all scale functions $\varphi$. \smallskip

\noindent In turn, $A\hat\otimes_\alpha B\setminus\cup_{\varphi\in\Phi} \mathfrak{B}_\varphi(Q)\ne\emptyset$ for any countable subset $\Phi$ of scale functions.\medskip

\noindent {\rm (3)} For a countable subset $\Phi$ of scale functions there exist scale functions $\varphi_{lb}$ and $\varphi_{ub}$ such that 
\[
\qquad\quad \mathfrak{B}_{\varphi_{lb}}(Q)\subset\bigcap_{\varphi\in\Phi} \mathfrak{B}_\varphi(Q)\subset \bigcup_{\varphi\in\Phi} \mathfrak{B}_\varphi(Q)\subset \mathfrak{B}_{\varphi_{ub}}(Q).
\]

\noindent {\rm (4)} $\mathfrak{B}_{\varphi'}(Q)\subset\mathfrak{B}_{\varphi}(Q)$ if and only if $\varphi'\le c\cdot\varphi$ for some $c\in (0,\infty)$. In particular,
$\mathfrak{B}_{\varphi^2}(Q)\subsetneq\mathfrak{B}_{\varphi}(Q)\subsetneq \mathfrak{B}_{\sqrt{\varphi}}(Q)$.
\end{Th}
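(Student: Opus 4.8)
\emph{Reformulation.} The plan is first to rephrase membership. For $x\notin V\otimes B$ we have $E_n(x)>0$ for all $n$ (each $V_n\otimes B$ is a closed subspace of $A\hat\otimes_\alpha B$, being linearly homeomorphic to $B^{\dim V_n}$), so $a_n(x):=\log(1/E_n(x))$ is well defined, nondecreasing, and $a_n(x)\to+\infty$; since $(E_n(x))^{\varphi(n)}=e^{-\varphi(n)a_n(x)}$ and $e^t$ is continuous and increasing,
\[
x\in\mathfrak B_\varphi(Q)\ \Longleftrightarrow\ \varlimsup_{n\to\infty}\varphi(n)\,a_n(x)>0 ,
\]
whereas every $x\in V\otimes B$ has $E_n(x)=0$ for large $n$, hence lies in $\mathfrak B_\varphi(Q)$ for all $\varphi$; in particular $V\otimes B\subset\bigcap_\varphi\mathfrak B_\varphi(Q)$. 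Thus (1)--(4) all reduce to comparing the decay of $\varphi$ with the growth of $a_n(x)$ (we assume $B\neq\{0\}$, the opposite case being trivial). For (1), given $\varphi$, set $W_m:=\{x:(E_n(x))^{\varphi(n)}<\tfrac12\text{ for some }n\ge m\}$: each $W_m$ is open (as $x\mapsto E_n(x)$ is $1$-Lipschitz and $s\mapsto s^{\varphi(n)}$ continuous) and dense (every $h$ in the dense subset $V\otimes B$ has $E_n(h)=0$ eventually, so $h\in W_m$). Since $x\in\bigcap_m W_m$ forces $\varliminf_n(E_n(x))^{\varphi(n)}\le\tfrac12<1$, the dense $G_\delta$ set $\bigcap_m W_m$ lies in $\mathfrak B_\varphi(Q)$; and because $\mathfrak B_\varphi(Q)=\bigcap_m(W_m\cup\mathfrak B_\varphi(Q))$, $\mathfrak B_\varphi(Q)$ is itself an intersection of countably many sets with dense interiors, i.e.\ comeager.

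\emph{A realization lemma.} The technical core, feeding (2) and (4), is: \emph{for every nonincreasing $\{d_n\}_{n\ge0}\subset(0,\infty)$ with $d_n\to0$ there exist $x\in A\hat\otimes_\alpha B$ and $c\in(0,1)$ with $c\,d_n\le E_n(x)\le c^{-1}d_n$ for all $n$.} To prove it I would thin $\{d_n\}$ via $n_1:=0$, $n_{j+1}:=\min\{m>n_j:d_m\le d_{n_j}/100\}$, so that $d_{n_j}/100<d_m\le d_{n_j}$ on each block $[n_j,n_{j+1})$. Since the filtration is strictly increasing, $V_k\otimes B\subsetneq V_{k+1}\otimes B$ (for $v\in V_{k+1}\setminus V_k$ and $0\neq b\in B$, a functional $\phi\otimes\psi$ with $\phi|_{V_k}=0$, $\phi(v)\neq0$, $\psi(b)\neq0$ kills $V_k\otimes B$ but not $v\otimes b$, and is bounded on $A\hat\otimes_\alpha B$ as $\alpha$ is reasonable), Riesz's lemma provides unit vectors $y_j\in V_{n_{j+1}}\otimes B$ with $\mathrm{dist}(y_j,V_{n_{j+1}-1}\otimes B)\ge\tfrac12$. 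Put $x:=\sum_j d_{n_j}y_j$ (absolutely convergent). For $m\in[n_j,n_{j+1})$ the partial sum $\sum_{i<j}d_{n_i}y_i$ lies in $V_m\otimes B$, so $E_m(x)\le\sum_{i\ge j}d_{n_i}\le2d_{n_j}<200\,d_m$; and since $\mathrm{dist}(y_j,V_m\otimes B)\ge\tfrac12$ while $\|\sum_{i>j}d_{n_i}y_i\|\le d_{n_j}/99$, we get $E_m(x)\ge\tfrac12d_{n_j}-\tfrac1{99}d_{n_j}>\tfrac13d_m$. So $c=1/200$ works.

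\emph{Parts (2)--(4).} For (2): for $x\notin V\otimes B$, the scale function $\varphi(n):=1/\max\{1,a_n(x)\}$ gives $\varphi(n)a_n(x)\to1$, so $x\in\mathfrak B_\varphi(Q)$, proving $A\hat\otimes_\alpha B=\bigcup_\varphi\mathfrak B_\varphi(Q)$; the scale function $\varphi(n):=\max\{1,a_n(x)\}^{-3/2}$ gives $\varphi(n)a_n(x)\to0$, so $x\notin\mathfrak B_\varphi(Q)$, proving $\bigcap_\varphi\mathfrak B_\varphi(Q)=V\otimes B$. For a countable $\Phi=\{\varphi_k\}$, pick $N_1<N_2<\cdots$ with $\varphi_j(n)\le k^{-2}$ for $j\le k$, $n\ge N_k$, and set $a_n:=k$ on $[N_k,N_{k+1})$; then $a_n$ is nondecreasing, $a_n\to\infty$, and $\varphi_k(n)a_n\to0$ for all $k$, so the lemma with $d_n:=e^{-a_n}$ produces $x$ with $a_n(x)=a_n+O(1)$, hence $x\in A\hat\otimes_\alpha B\setminus\bigcup_k\mathfrak B_{\varphi_k}(Q)$. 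For (4): "if" is immediate, since $\varphi'\le c\varphi$ gives $\varlimsup\varphi(n)a_n(x)\ge c^{-1}\varlimsup\varphi'(n)a_n(x)$; for "only if", if $\sup_n\varphi'(n)/\varphi(n)=\infty$ pick $n_1<n_2<\cdots$ with $\varphi'(n_j)/\varphi(n_j)\to\infty$, set $a_n:=1/\varphi'(n_j)$ on $[n_j,n_{j+1})$ (nondecreasing, $\to\infty$, with $\varphi(n)a_n\le\varphi(n_j)/\varphi'(n_j)\to0$ but $\varphi'(n_j)a_{n_j}=1$), and apply the lemma with $d_n:=e^{-a_n}$ to obtain $x\in\mathfrak B_{\varphi'}(Q)\setminus\mathfrak B_\varphi(Q)$; the displayed strict inclusions then follow from $\varphi^2\le\varphi(1)\varphi$, $\varphi\le\sqrt{\varphi(1)}\sqrt\varphi$, and the fact that $\varphi\le c\varphi^2$ or $\sqrt\varphi\le c\varphi$ would force $\inf_n\varphi(n)>0$. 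For (3) (using only the "if" half of (4)): $\varphi_{ub}(n):=\sup_k\varphi_k(n)/(2^k\varphi_k(1))$ is a scale function with $\varphi_k\le2^k\varphi_k(1)\,\varphi_{ub}$, so $\bigcup_k\mathfrak B_{\varphi_k}(Q)\subset\mathfrak B_{\varphi_{ub}}(Q)$; and $\varphi_{lb}(n):=\min_{k\le n}\varphi_k(n)$ is a scale function with $\varphi_{lb}\le c_k\varphi_k$ for suitable $c_k>0$, so $\mathfrak B_{\varphi_{lb}}(Q)\subset\bigcap_k\mathfrak B_{\varphi_k}(Q)$.

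\emph{Main obstacle.} The realization lemma is the only genuine difficulty; all else is elementary bookkeeping with $\varlimsup$'s plus the routine facts that $V\otimes B$ is dense in $A\hat\otimes_\alpha B$ and each $V_n\otimes B$ is closed. Inside the lemma, the delicate step is the uniform lower bound $E_m(x)\gtrsim d_m$ over an entire block $[n_j,n_{j+1})$, which is why $y_j$ must be nearly orthogonal to all of $V_{n_{j+1}-1}\otimes B$, not just to $V_{n_j}\otimes B$; strictness of the filtration and Riesz's lemma supply such $y_j$, and the geometric thinning of $\{d_n\}$ makes the series tail $\sum_{i>j}d_{n_i}y_i$ negligible against the leading term $d_{n_j}y_j$.
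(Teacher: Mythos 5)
Your proof is correct, and its overall architecture matches the paper's: a Baire-category argument for (1), a scale function adapted to the decay of $E_n(x)$ for the two identities in (2), the $\sup$/$\min$ constructions for (3), and a realization-of-prescribed-approximation-numbers argument for the ``only if'' half of (4). The genuine difference lies in the key technical ingredient. The paper invokes the exact Bernstein lethargy theorem in Timan's Banach-space form (existence of $a\in A$ with $E_n(a)=c_n$ exactly) and transports it into $A\hat\otimes_\alpha B$ via the embedding $a\mapsto a\otimes b$, using the reasonable-crossnorm axioms to show this embedding preserves best-approximation numbers (the paper's Lemmas 3.3 and 3.5). You instead prove from scratch, via geometric thinning of $\{d_n\}$ and Riesz's lemma applied to the strict inclusions $V_{n_{j+1}-1}\otimes B\subsetneq V_{n_{j+1}}\otimes B$, a two-sided realization only up to multiplicative constants, $c\,d_n\le E_n(x)\le c^{-1}d_n$. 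This is weaker but entirely sufficient, since membership in $\mathfrak B_\varphi(Q)$ is insensitive to bounded multiplicative perturbations of $E_n$ (as $\varphi(n)\ln C\to 0$); it buys self-containedness at essentially no cost, and your lower-bound step (choosing $y_j$ far from all of $V_{n_{j+1}-1}\otimes B$ so the bound holds across the whole block) is handled correctly. A second, smaller divergence: for the claim that no countable union $\bigcup_{\varphi\in\Phi}\mathfrak B_\varphi(Q)$ exhausts $A\hat\otimes_\alpha B$, the paper forms $\tilde\varphi=\sup_i\varphi_i$ after normalization, passes to $\sqrt{\tilde\varphi}$, and appeals to part (4), whereas you build the missing element directly by prescribing a step sequence $a_n$ that outruns every $\varphi_k$ --- equivalent, and arguably cleaner. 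Two assertions you should flesh out in a final write-up: that $\varphi_{ub}(n)=\sup_k \varphi_k(n)/(2^k\varphi_k(1))$ actually tends to $0$ (this is the content of the paper's Lemma 3.2 and requires the two-case argument on whether the index achieving the supremum stays bounded), and that the separating functional $\phi\otimes\psi$ is bounded on $A\hat\otimes_\alpha B$, which is precisely the dual-norm half of the reasonable-crossnorm axiom.
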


Part (1) of the theorem is the analog of property (2) of the Introduction. It reflects the fact that any ``generic metric property'' of $A\hat\otimes_\alpha B$ is also generic for each class $\mathfrak B_{\varphi}(Q)$ (cf. Theorem \ref{te1} below).

In turn, part (4) implies that sets $\mathfrak{B}_{\varphi}(Q)$ and $\mathfrak{B}_{\varphi'}(Q)$ coincide if and only if there exist $c_1,c_2\in (0,\infty)$ such that $c_1\cdot\varphi\le \varphi'\le c_2\cdot\varphi$.   Such $\varphi$ and $\varphi'$ are called equivalent. So $\mathfrak{B}_{\varphi}(Q)$ is uniquely determined by the equivalence class $\{\varphi\}$ of $\varphi$ and we may write $\mathfrak{B}_{\{\varphi\}}(Q)$ instead of $\mathfrak{B}_{\varphi}(Q)$. 

Let $\mathscr B (Q)$ be the collection of all pairwise distinct
sets $\mathfrak{B}_{\{\varphi\}}(Q)$. We define the partial order on $\mathscr B (Q)$ by means of inclusion of sets.  As a consequence of Theorem \ref{prop2.1} we obtain:
\begin{C}\label{lat}
 $(\mathscr{B}(Q),\subseteq )$ is a lattice and
 \[
 \begin{array}{l}
 \displaystyle
 \mathfrak{B}_{\{\varphi_1\}}(Q)\vee \mathfrak{B}_{\{\varphi_2\}}(Q)=\mathfrak{B}_{\{\max(\varphi_1,\varphi_2)\}}(Q)= \mathfrak{B}_{\{\varphi_1\}}(Q)\cup \mathfrak{B}_{\{\varphi_2\}}(Q);\\
 \\
 \displaystyle
 \mathfrak{B}_{\{\varphi_1\}}(Q)\wedge \mathfrak{B}_{\{\varphi_2\}}(Q)=\mathfrak{B}_{\{\min(\varphi_1,\varphi_2)\}}(Q)\, \bigl(\subseteq   \mathfrak{B}_{\{\varphi_1\}}(Q)\cap\mathfrak{B}_{\{\varphi_2\}}(Q)\bigr).
 \end{array}
 \]
\end{C}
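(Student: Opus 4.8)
The plan is to derive the corollary entirely from part (4) of Theorem~\ref{prop2.1} together with one elementary observation: the pointwise maximum and minimum of two scale functions are again scale functions. Indeed, if $\varphi_1,\varphi_2:\mathbb N\to(0,\infty)$ are nonincreasing with limit $0$, then $\max(\varphi_1,\varphi_2)$ and $\min(\varphi_1,\varphi_2)$ are positive, nonincreasing (the pointwise max or min of two nonincreasing sequences is nonincreasing, since $\max(\varphi_i(m))\ge\varphi_i(m)\ge\varphi_i(n)$ for $m\le n$, and dually for $\min$), and tend to $0$. Hence $\mathfrak{B}_{\{\max(\varphi_1,\varphi_2)\}}(Q)$ and $\mathfrak{B}_{\{\min(\varphi_1,\varphi_2)\}}(Q)$ are legitimate members of $\mathscr B(Q)$, so it suffices to verify the two displayed identities; the fact that $(\mathscr B(Q),\subseteq)$ is a lattice then follows because every pair has a supremum and an infimum inside $\mathscr B(Q)$.

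For the join, from $\varphi_i\le\max(\varphi_1,\varphi_2)$ and part (4) I get $\mathfrak{B}_{\{\varphi_i\}}(Q)\subset\mathfrak{B}_{\{\max(\varphi_1,\varphi_2)\}}(Q)$, so the latter class is an upper bound containing $\mathfrak{B}_{\{\varphi_1\}}(Q)\cup\mathfrak{B}_{\{\varphi_2\}}(Q)$. For the reverse inclusion I would take $x\in\mathfrak{B}_{\{\max(\varphi_1,\varphi_2)\}}(Q)$ and choose a subsequence $(n_k)$ along which $(E_{n_k}(x))^{\max(\varphi_1,\varphi_2)(n_k)}$ converges to some $L<1$; since for each $k$ the exponent $\max(\varphi_1,\varphi_2)(n_k)$ equals $\varphi_1(n_k)$ or $\varphi_2(n_k)$, a pigeonhole argument produces a further subsequence $(n_{k_j})$ along which it is identically, say, $\varphi_1(n_{k_j})$, whence $\varliminf_{n\to\infty}(E_n(x))^{\varphi_1(n)}\le L<1$ and $x\in\mathfrak{B}_{\varphi_1}(Q)$. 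This gives $\mathfrak{B}_{\{\max(\varphi_1,\varphi_2)\}}(Q)\subset\mathfrak{B}_{\{\varphi_1\}}(Q)\cup\mathfrak{B}_{\{\varphi_2\}}(Q)$, so the three sets coincide. Finally, this common set is the least upper bound: if $\mathfrak{B}_{\{\psi\}}(Q)$ contains both $\mathfrak{B}_{\{\varphi_i\}}(Q)$, then part (4) gives $\varphi_i\le c_i\psi$, so $\max(\varphi_1,\varphi_2)\le\max(c_1,c_2)\psi$, and part (4) again yields $\mathfrak{B}_{\{\max(\varphi_1,\varphi_2)\}}(Q)\subset\mathfrak{B}_{\{\psi\}}(Q)$.

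For the meet the argument is dual and simpler, and uses only part (4). From $\min(\varphi_1,\varphi_2)\le\varphi_i$ one gets $\mathfrak{B}_{\{\min(\varphi_1,\varphi_2)\}}(Q)\subset\mathfrak{B}_{\{\varphi_i\}}(Q)$ for $i=1,2$, so this class is a lower bound and in particular is contained in $\mathfrak{B}_{\{\varphi_1\}}(Q)\cap\mathfrak{B}_{\{\varphi_2\}}(Q)$; and if $\mathfrak{B}_{\{\psi\}}(Q)$ is any lower bound, part (4) gives $\psi\le c_i\varphi_i$, hence $\psi\le\max(c_1,c_2)\min(\varphi_1,\varphi_2)$ and so $\mathfrak{B}_{\{\psi\}}(Q)\subset\mathfrak{B}_{\{\min(\varphi_1,\varphi_2)\}}(Q)$. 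Thus $\mathfrak{B}_{\{\min(\varphi_1,\varphi_2)\}}(Q)$ is the greatest lower bound, completing the proof.

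The only genuinely nontrivial point is the inclusion $\mathfrak{B}_{\{\max(\varphi_1,\varphi_2)\}}(Q)\subset\mathfrak{B}_{\{\varphi_1\}}(Q)\cup\mathfrak{B}_{\{\varphi_2\}}(Q)$, i.e.\ that the lattice join is realized by the honest set-theoretic union rather than merely by the smallest class lying above both; I expect the pigeonhole step — extracting a subsequence on which the pointwise maximum is attained by one fixed scale function — to be the crux, while every remaining step is a routine application of part (4) of Theorem~\ref{prop2.1}.
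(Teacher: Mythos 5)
Your proposal is correct and follows essentially the same route as the paper: the join is handled by combining the monotonicity consequence of Theorem \ref{prop2.1}\,(4) with exactly the same pigeonhole subsequence extraction showing $\mathfrak{B}_{\{\max(\varphi_1,\varphi_2)\}}(Q)\subseteq \mathfrak{B}_{\{\varphi_1\}}(Q)\cup\mathfrak{B}_{\{\varphi_2\}}(Q)$, and the meet is handled by the same two applications of part (4). The only difference is presentational — you verify the least-upper-bound property of the max-class directly rather than sandwiching it between the union and the abstract join — which is, if anything, slightly cleaner.
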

Parts (2) and (3) of Theorem \ref{prop2.1} imply that any countable two-sided chain of elements of $\mathscr{B}(Q)$ has upper and lower bounds. However, $\mathscr{B}(Q)$ does not contain neither maximal nor minimal elements. Hence, Zorn's lemma is not applicable and, in particular, there exist uncountable chains in $\mathscr{B}(Q)$ which do not have upper or lower bounds. 

Note that $\mathscr{B}(Q)$ has the structure of a commutative semiring with addition $\cup\, (=\vee)$ and multiplication given by 
$\mathfrak{B}_{\{\varphi_1\}}(Q)\cdot \mathfrak{B}_{\{\varphi_2\}}(Q)=\mathfrak{B}_{\{\varphi_1\cdot\varphi_2\}}(Q)$ induced by addition and multiplication on the set of scale functions.

Let $\mathcal S$ be the set of equivalence classes of scale functions. We introduce the partial order on $\mathcal S$ writing $\{\varphi_1\}\le\{\varphi_2\}$ if and only if $\varphi_1\le c\cdot\varphi_2$ for some $c>0$. In addition, we regard $\mathcal S$ as a commutative semiring with addition $\{\varphi_1\}+\{\varphi_2\}=\{\max(\varphi_1,\varphi_2)\}$ and multiplication $\{\varphi_1\}\cdot \{\varphi_2\}=\{\varphi_1\cdot\varphi_2\}$. Then Theorem \ref{prop2.1} and Corollary \ref{lat} yield:
\begin{C}\label{iso}
For each $Q\in\mathscr C$ map $I_Q: (\mathcal S,+,\cdot,\le)\rightarrow (\mathscr{B}(Q),\cup,\cdot,\subseteq)$, $I_Q(s):=\mathfrak{B}_s(Q)$,  is an isomorphism of semirings preserving partial orders.
\end{C}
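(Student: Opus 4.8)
The plan is to obtain the corollary as a formal consequence of Theorem \ref{prop2.1}(4) and Corollary \ref{lat}: once one knows exactly which inclusions and coincidences hold among the sets $\mathfrak{B}_\varphi(Q)$, the map $I_Q$ is nothing but a relabelling of the elements of $\mathscr{B}(Q)$ by the equivalence classes of scale functions that define them, and it remains only to check that this relabelling respects $+$, $\cdot$ and $\le$.

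First I would record that $I_Q$ is a well-defined order isomorphism of posets. By the discussion preceding Corollary \ref{lat}, Theorem \ref{prop2.1}(4) gives $\mathfrak{B}_{\varphi'}(Q)\subseteq\mathfrak{B}_{\varphi}(Q)$ precisely when $\varphi'\le c\varphi$ for some $c\in(0,\infty)$, i.e. precisely when $\{\varphi'\}\le\{\varphi\}$ in $\mathcal{S}$; applying this in both directions, $\mathfrak{B}_{\varphi}(Q)=\mathfrak{B}_{\varphi'}(Q)$ holds iff $\{\varphi\}=\{\varphi'\}$. Hence $s\mapsto\mathfrak{B}_s(Q)$ is unambiguous and injective on $\mathcal{S}$, it is surjective onto $\mathscr{B}(Q)$ by the very definition of $\mathscr{B}(Q)$ as the collection of pairwise distinct sets $\mathfrak{B}_{\{\varphi\}}(Q)$, and it satisfies $s_1\le s_2\iff I_Q(s_1)\subseteq I_Q(s_2)$, so both $I_Q$ and $I_Q^{-1}$ preserve the partial orders.

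Next I would check that $I_Q$ is a semiring homomorphism. For addition this is exactly Corollary \ref{lat}: $I_Q(\{\varphi_1\}+\{\varphi_2\})=\mathfrak{B}_{\{\max(\varphi_1,\varphi_2)\}}(Q)=\mathfrak{B}_{\{\varphi_1\}}(Q)\cup\mathfrak{B}_{\{\varphi_2\}}(Q)=I_Q(\{\varphi_1\})\cup I_Q(\{\varphi_2\})$. For multiplication it holds by the very definition of the product on $\mathscr{B}(Q)$, namely $\mathfrak{B}_{\{\varphi_1\}}(Q)\cdot\mathfrak{B}_{\{\varphi_2\}}(Q):=\mathfrak{B}_{\{\varphi_1\cdot\varphi_2\}}(Q)=I_Q(\{\varphi_1\}\cdot\{\varphi_2\})$. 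Since $(\mathcal{S},\max,\cdot)$ is a commutative semiring — commutativity, associativity and the distributive law $\max(\varphi_1,\varphi_2)\cdot\varphi_3=\max(\varphi_1\varphi_3,\varphi_2\varphi_3)$ being pointwise identities valid because scale functions are strictly positive, and a pointwise product of two nonincreasing positive functions tending to $0$ being again a scale function — transporting this structure through the bijection $I_Q$ shows simultaneously that $(\mathscr{B}(Q),\cup,\cdot)$ is a commutative semiring and that $I_Q$ is an isomorphism onto it.

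The only point needing a word of care is the well-definedness of the product on $\mathscr{B}(Q)$, i.e. that $\mathfrak{B}_{\{\varphi_1\cdot\varphi_2\}}(Q)$ depends on $\{\varphi_1\}$ and $\{\varphi_2\}$ alone; but if $\varphi_i$ is equivalent to $\varphi_i'$ for $i=1,2$ then clearly $\varphi_1\varphi_2$ is equivalent to $\varphi_1'\varphi_2'$, so $\{\varphi_1\varphi_2\}$ is well defined, and Theorem \ref{prop2.1}(4) then pins down $\mathfrak{B}_{\{\varphi_1\varphi_2\}}(Q)$. I do not expect a genuine obstacle here: beyond Theorem \ref{prop2.1} and Corollary \ref{lat} the argument uses nothing more than elementary manipulations with the partial order on scale functions, the content of the corollary being precisely the observation that all order- and semiring-theoretic information about $\mathscr{B}(Q)$ is already encoded in $\mathcal{S}$.
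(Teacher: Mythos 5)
Your proof is correct and follows exactly the route the paper intends: the corollary is stated there without a separate proof as an immediate consequence of Theorem \ref{prop2.1}\,(4) (which gives injectivity and the order isomorphism) and Corollary \ref{lat} (which gives additivity), with multiplicativity holding by the definition of the product on $\mathscr{B}(Q)$. Your extra checks --- that the product of two scale functions is again a scale function and that the product on equivalence classes is well defined --- are the right details to verify and present no difficulty.
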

In particular, $\mathscr{B}(Q)$ as well as $\mathcal S$ has the cardinality of the continuum.\smallskip 

Let us formulate analogous results for the first Bernstein classes.  

To this end, for a weight function $\kappa$ we define the scale function $\Sigma(\kappa)$ by the formula:
\begin{equation}\label{equ2.1}
\Sigma(\kappa)(n):=\sum_{i=n}^\infty \kappa(i),\quad n\in\mathbb N.
\end{equation}
\begin{Th}\label{prop2.2}
For every $Q=((A,V);B;\alpha)\in \mathscr C$ we have\smallskip

\noindent {\rm (1)} 
$\mathfrak{B}_{\Sigma(\kappa)}(Q)\subsetneq \mathfrak{B}^*_{\kappa}(Q)$
and is maximal among all subsets in  $\mathscr{B}(Q)$ containing in $\mathfrak{B}^*_{\kappa}(Q)$.\smallskip

\noindent {\rm (2)} $\mathfrak{B}_{\Sigma(\kappa_1)}(Q)=\mathfrak{B}_{\Sigma(\kappa_2)}(Q)$ if and only if $\mathfrak{B}^*_{\kappa_1}(Q)=\mathfrak{B}^*_{\kappa_2}(Q)$.\smallskip

\noindent {\rm (3)} For each scale function $\varphi$ there is a weight function $\kappa$ such that $\mathfrak{B}_{\Sigma(\kappa)}(Q)=\mathfrak{B}_{\varphi}(Q)$.\smallskip

\noindent {\rm (4)} 
\[
\mathfrak{B}^*_\kappa(Q)=\bigcap_{\varphi\in\Phi_\kappa}\mathfrak{B}_{\varphi}(Q),
\]
where $\Phi_\kappa$ is the class of scale functions $\varphi$ such that \ $\displaystyle \sum_{n=1}^\infty \frac{\kappa(n)}{\varphi(n)}<\infty$.\medskip 
\end{Th}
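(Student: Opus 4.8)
The plan is to translate membership in both families into conditions on the monotone sequence $L_n(x):=\log\bigl(1/E_n(x)\bigr)$ and then argue entirely with monotone real sequences. If $x\in V\otimes B$ then $E_n(x)=0$ for large $n$, so $x$ lies in every $\mathfrak B_\varphi(Q)$ and every $\mathfrak B^*_\kappa(Q)$ and may be discarded; for $x\notin V\otimes B$ (where, as $V_n\otimes B$ is closed, $E_n(x)>0$ for all $n$) the sequence $(L_n(x))$ is nondecreasing with $L_n(x)\to\infty$, and since $\sum_n\kappa(n)<\infty$ and $\varphi(n)\to0$ a bounded additive change of $(L_n(x))$ (equivalently, multiplying $E_n(x)$ by a constant) affects neither of the relations
\[
x\in\mathfrak B_\varphi(Q)\iff\varlimsup_{n}\varphi(n)L_n(x)>0,\qquad x\in\mathfrak B^*_\kappa(Q)\iff\sum_{n}\kappa(n)L_n(x)=\infty .
\]
The one non-elementary input is a \emph{realization lemma}: for every nondecreasing real sequence $(\ell_n)$ with $\ell_n\to\infty$ there is $x$ with $\sup_n|L_n(x)-\ell_n|<\infty$. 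I would prove it (unless it is already available from the proof of Theorem~\ref{prop2.1}) by taking $x=a\otimes b$ with $b\ne0$ fixed, so that $E_n(a\otimes b)=E_n(a)\|b\|_B$ — the lower bound coming from pairing with $f\otimes g$, where $f\in A^*$ is norm-one, annihilates $V_n$, and has $f(a)=E_n(a)$, and $g\in B^*$ is norm-one with $g(b)=\|b\|_B$, using the reasonable-crossnorm axioms — which reduces matters to $B=\mathbb F$; and for the scalar case by forming a lacunary sum $a=\sum_j d_j v_{m_j}$ with $v_m\in V_m$, $\mathrm{dist}(v_m,V_{m-1})=1$, $\|v_m\|_A\le2$, the indices $m_j$ chosen so that $e^{-\ell_n}$ drops by a factor $\ge4$ between consecutive blocks $[m_{j-1},m_j)$, and $d_j:=e^{-\ell_{m_{j-1}}}$, which yields $E_n(a)\asymp d_j\asymp e^{-\ell_n}$ on the $j$-th block. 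Consequently the sequences $(L_n(x))$ that occur are, up to bounded perturbation, precisely the nondecreasing sequences tending to $\infty$.

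For part~(1): if $\varlimsup_n\Sigma(\kappa)(n)L_n(x)>0$, pick $n_k\uparrow\infty$ with $\Sigma(\kappa)(n_k)L_{n_k}(x)>\delta>0$; by monotonicity $\sum_{i\ge n_k}\kappa(i)L_i(x)\ge L_{n_k}(x)\,\Sigma(\kappa)(n_k)>\delta$, so the tails of $\sum_n\kappa(n)L_n(x)$ do not vanish and the series diverges, giving $\mathfrak B_{\Sigma(\kappa)}(Q)\subseteq\mathfrak B^*_\kappa(Q)$. For strict inclusion I would use the ``halving blocks'' $[m_{j-1},m_j)$ of $\sigma:=\Sigma(\kappa)$ (so $s_j:=\sigma(m_{j-1})$ obeys $s_{j+1}\le\tfrac12 s_j$ and $\sum_{m_{j-1}\le n<m_j}\kappa(n)=s_j-s_{j+1}\ge\tfrac12 s_j$) and realize $\ell_n:=\tfrac1{j\,s_j}$ on the $j$-th block: this is nondecreasing and tends to $\infty$, with $\sigma(n)\ell_n\le\tfrac1j\to0$ but $\sum_n\kappa(n)\ell_n\ge\sum_j\tfrac12 s_j\cdot\tfrac1{j s_j}=\infty$, so the resulting $x$ lies in $\mathfrak B^*_\kappa(Q)\setminus\mathfrak B_{\Sigma(\kappa)}(Q)$. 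For maximality, by Theorem~\ref{prop2.1}(4) it suffices to check that $\varlimsup_n\varphi(n)/\Sigma(\kappa)(n)=\infty$ forces $\mathfrak B_\varphi(Q)\not\subseteq\mathfrak B^*_\kappa(Q)$: pick $n_k\uparrow\infty$ with $\varphi(n_k)/\Sigma(\kappa)(n_k)>2^k$ and realize $\ell_n:=1/\varphi(n_{k(n)})$, $k(n):=\max\{k:n_k\le n\}$; then $\varphi(n_k)\ell_{n_k}=1$ (so $x\in\mathfrak B_\varphi(Q)$) while $\sum_n\kappa(n)\ell_n\le\sum_k\Sigma(\kappa)(n_k)/\varphi(n_k)<\sum_k 2^{-k}<\infty$ (so $x\notin\mathfrak B^*_\kappa(Q)$).

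For parts~(2) and~(3): the key to~(2) is the Abel-summation fact that, for nondecreasing $\ell_n\to\infty$, the series $\sum_n\kappa(n)\ell_n$ and $\sum_n\Sigma(\kappa)(n)(\ell_n-\ell_{n-1})$ converge or diverge together. Since the differences $\ell_n-\ell_{n-1}$ exhaust the nonnegative sequences with divergent sum, the realization lemma gives $\mathfrak B^*_{\kappa_1}(Q)=\mathfrak B^*_{\kappa_2}(Q)$ iff $\sum_n\Sigma(\kappa_1)(n)\delta_n$ and $\sum_n\Sigma(\kappa_2)(n)\delta_n$ always converge together over such $\delta$; a short argument (testing with $\delta$ supported on a sufficiently sparse set) shows this holds iff $\Sigma(\kappa_1)$ and $\Sigma(\kappa_2)$ are equivalent scale functions, which by Theorem~\ref{prop2.1}(4) is equivalent to $\mathfrak B_{\Sigma(\kappa_1)}(Q)=\mathfrak B_{\Sigma(\kappa_2)}(Q)$. (One implication is also immediate from the uniqueness of the maximal element in part~(1).) For~(3), given a scale function $\varphi$ I would set $\kappa(n):=\varphi(n)-\varphi(n+1)+2^{-n}\varphi(n)$; this is a weight function with $\Sigma(\kappa)(n)=\varphi(n)+\sum_{i\ge n}2^{-i}\varphi(i)\in[\varphi(n),2\varphi(n)]$, so $\Sigma(\kappa)$ is equivalent to $\varphi$ and $\mathfrak B_{\Sigma(\kappa)}(Q)=\mathfrak B_\varphi(Q)$.

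Part~(4) and the main obstacle. One inclusion is a direct estimate: if $x\in\mathfrak B^*_\kappa(Q)$ and $\varphi\in\Phi_\kappa$ but $x\notin\mathfrak B_\varphi(Q)$, then $\varphi(n)L_n(x)\to0$, so eventually $\kappa(n)L_n(x)\le\varepsilon\,\kappa(n)/\varphi(n)$ for each $\varepsilon>0$, whence $\sum_n\kappa(n)L_n(x)\le\varepsilon\sum_n\kappa(n)/\varphi(n)<\infty$ — a contradiction. Conversely, if $x\notin\mathfrak B^*_\kappa(Q)$ (so $x\notin V\otimes B$ and, writing $L_n:=\max(L_n(x),1)$, $\sum_n\kappa(n)L_n<\infty$), put $R_n:=\sum_{i\ge n}\kappa(i)L_i\downarrow0$ and $\varphi(n):=\sqrt{R_n}/L_n$; this is a scale function with $\varphi(n)L_n(x)\le\varphi(n)L_n=\sqrt{R_n}\to0$ (so $x\notin\mathfrak B_\varphi(Q)$) and $\sum_n\kappa(n)/\varphi(n)=\sum_n(R_n-R_{n+1})/\sqrt{R_n}\le 2\sqrt{R_1}<\infty$ (so $\varphi\in\Phi_\kappa$), hence $x\notin\bigcap_{\varphi\in\Phi_\kappa}\mathfrak B_\varphi(Q)$. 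I expect the realization lemma to be the main obstacle — it is the sole genuinely Banach-space ingredient, and it must be arranged so that prescribed best-approximation behaviour can actually be attained — while all remaining steps are elementary estimates for monotone sequences together with Abel summation.
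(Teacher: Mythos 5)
Your proposal is correct, and its skeleton coincides with the paper's: both reduce membership to conditions on the monotone sequence $L_n(x)=\ln\bigl(1/E_n(x)\bigr)$, both rely on a Bernstein-type realization theorem to manufacture separating elements, and both use Abel summation to pass between $\sum_n\kappa(n)L_n$ and conditions phrased in terms of $\Sigma(\kappa)$. The differences lie in the key constructions, and yours are in several places simpler or more explicit. (i) For the realization input the paper invokes the exact Bernstein--Timan theorem ($E_n(x)=c_n$ for any prescribed nonincreasing $c_n\downarrow 0$), reduced to the scalar case by the same $I_b$ / ${\rm Id}\otimes b^*$ device you describe; your up-to-bounded-perturbation lacunary sum is a self-contained substitute and, as you correctly note, suffices because $\sum_n\kappa(n)<\infty$ and $\varphi(n)\to 0$ make both classes insensitive to bounded additive changes of $L_n$. (ii) For the strict inclusion in (1) the paper proves a rather involved lemma producing an increasing $h$ with $h\cdot\Sigma(\kappa)\downarrow 0$ and $\sum_n\kappa(n)/(h(n)\Sigma(\kappa)(n))=\infty$ via a delicate inductive choice of exponents $\alpha_k$; your halving-block sequence $\ell_n=1/(js_j)$ achieves the same separation in a few lines. (iii) For (2) the paper proves the forward implication by an Abel-summation case analysis on the boundary term and obtains the converse from the maximality in (1) together with the lattice structure of $\mathscr B(Q)$; you instead characterize equality of the first classes directly by testing with sparse increment sequences $\delta$, which yields both directions at once (and your parenthetical remark recovers the paper's route to the converse). (iv) For the reverse inclusion in (4) the paper appeals to an unspecified ``standard argument'' producing $\mu\downarrow 0$ with $\mu\cdot\varphi_x\in\Phi_\kappa$; your choice $\varphi(n)=\sqrt{R_n}/L_n$ with the telescoping bound $\sum_n(R_n-R_{n+1})/\sqrt{R_n}\le 2\sqrt{R_1}$ makes that step explicit. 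Part (3) and the forward inclusions in (1) and (4) match the paper's arguments essentially verbatim. In short, nothing is missing: the sole nonelementary ingredient you isolate is exactly the tool the paper itself imports from Timan in its proof of Theorem \ref{prop2.1}(4), and your sketch of it is sound.
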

\begin{R}\label{rem2.6}
{\rm Since $\mathscr{B}(Q)$ is a lattice, the maximal element in Theorem \ref{prop2.2}\,(1) is unique.
}
\end{R}
From this theorem we obtain the following result.
\begin{C}\label{cor2.4}
Sets $\mathfrak{B}_\kappa^*(Q)$ satisfy the properties similar to (1)--(3) of Theorem \ref{prop2.1} with $\mathfrak{B}$ replaced by $\mathfrak{B}^*$.
\end{C}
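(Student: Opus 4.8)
The plan is to derive all three properties from Theorem~\ref{prop2.2}, which exhibits every first Bernstein class $\mathfrak{B}^*_\kappa(Q)$ as sandwiched between second Bernstein classes: it contains $\mathfrak{B}_{\Sigma(\kappa)}(Q)$ (part~(1)), equals $\bigcap_{\varphi\in\Phi_\kappa}\mathfrak{B}_\varphi(Q)$ (part~(4)), and the equivalence classes $\{\Sigma(\kappa)\}$ run over all equivalence classes of scale functions (part~(3)). Two elementary observations about the sets $\Phi_\kappa=\{\varphi:\sum_n\kappa(n)/\varphi(n)<\infty\}$ will be used throughout. First, $\Phi_\kappa\neq\emptyset$ for every weight function $\kappa$: the function $\varphi(n):=\sqrt{\Sigma(\kappa)(n)}$ is a scale function, and from $\kappa(n)=\Sigma(\kappa)(n)-\Sigma(\kappa)(n+1)\le 2\sqrt{\Sigma(\kappa)(n)}\bigl(\sqrt{\Sigma(\kappa)(n)}-\sqrt{\Sigma(\kappa)(n+1)}\bigr)$ one gets $\sum_n \kappa(n)/\sqrt{\Sigma(\kappa)(n)}\le 2\sqrt{\Sigma(\kappa)(1)}<\infty$ by telescoping, so $\varphi\in\Phi_\kappa$. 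Second, the assignment $\kappa\mapsto\Phi_\kappa$ is inclusion-reversing: if $\kappa_1\le c\,\kappa_2$ for some $c>0$ then $\Phi_{\kappa_2}\subseteq\Phi_{\kappa_1}$, whence $\mathfrak{B}^*_{\kappa_1}(Q)\subseteq\mathfrak{B}^*_{\kappa_2}(Q)$ by Theorem~\ref{prop2.2}(4); this is the analogue of the ``if'' part of Theorem~\ref{prop2.1}(4), and it is the only extra monotonicity fact needed.

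For the analogue of Theorem~\ref{prop2.1}(1), $\mathfrak{B}^*_\kappa(Q)$ contains the comeager set $\mathfrak{B}_{\Sigma(\kappa)}(Q)$ (Theorem~\ref{prop2.1}(1)), and a superset of a comeager set is comeager. For the analogue of part~(2): each $x\in V\otimes B$ lies in some $V_N\otimes B$, so $E_n(x)=0$ for $n\ge N$ and the product in \eqref{beurling} vanishes; thus $V\otimes B\subseteq\bigcap_\kappa\mathfrak{B}^*_\kappa(Q)$. Conversely, given a scale function $\varphi_0$ (necessarily bounded), the weight function $\kappa(n):=2^{-n}\varphi_0(n)$ has $\varphi_0\in\Phi_\kappa$, so $\mathfrak{B}^*_\kappa(Q)=\bigcap_{\varphi\in\Phi_\kappa}\mathfrak{B}_\varphi(Q)\subseteq\mathfrak{B}_{\varphi_0}(Q)$; intersecting over $\kappa$ and using $\bigcap_{\varphi_0}\mathfrak{B}_{\varphi_0}(Q)=V\otimes B$ from Theorem~\ref{prop2.1}(2) yields $\bigcap_\kappa\mathfrak{B}^*_\kappa(Q)=V\otimes B$. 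For the union, Theorem~\ref{prop2.2}(3) lets us write each $\mathfrak{B}_\varphi(Q)$ as $\mathfrak{B}_{\Sigma(\kappa)}(Q)\subseteq\mathfrak{B}^*_\kappa(Q)$, so $\bigcup_\kappa\mathfrak{B}^*_\kappa(Q)\supseteq\bigcup_\varphi\mathfrak{B}_\varphi(Q)=A\hat\otimes_\alpha B$ by Theorem~\ref{prop2.1}(2), the reverse inclusion being obvious. Finally, for a countable set $K$ of weight functions choose $\varphi_\kappa\in\Phi_\kappa$ (say $\varphi_\kappa:=\sqrt{\Sigma(\kappa)}$) for each $\kappa\in K$; then $\bigcup_{\kappa\in K}\mathfrak{B}^*_\kappa(Q)\subseteq\bigcup_{\kappa\in K}\mathfrak{B}_{\varphi_\kappa}(Q)$, which by the last assertion of Theorem~\ref{prop2.1}(2) is a proper subset of $A\hat\otimes_\alpha B$.

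For the analogue of Theorem~\ref{prop2.1}(3), write $K=\{\kappa_j\}_{j\in\mathbb{N}}$, put $S_j:=\sum_n\kappa_j(n)<\infty$, and set $\kappa_{ub}(n):=\sum_{j=1}^\infty 2^{-j}\kappa_j(n)/S_j$ and $\kappa_{lb}(n):=2^{-n}\min_{1\le j\le n}\kappa_j(n)$. Then $\kappa_{ub}$ is a weight function with $\sum_n\kappa_{ub}(n)=1$ and $\kappa_j\le (2^jS_j)\,\kappa_{ub}$ for all $j$, while $\kappa_{lb}$ is a weight function ($\kappa_{lb}(n)\le 2^{-n}\kappa_1(n)$ and $\kappa_1$ is bounded) with $\kappa_{lb}\le c_j\,\kappa_j$ for all $j$ (indeed $\kappa_{lb}(n)\le\kappa_j(n)$ once $n\ge j$, and only finitely many indices remain). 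By the monotonicity from the first paragraph, $\mathfrak{B}^*_{\kappa_j}(Q)\subseteq\mathfrak{B}^*_{\kappa_{ub}}(Q)$ and $\mathfrak{B}^*_{\kappa_{lb}}(Q)\subseteq\mathfrak{B}^*_{\kappa_j}(Q)$ for every $j$, which gives $\mathfrak{B}^*_{\kappa_{lb}}(Q)\subseteq\bigcap_{\kappa\in K}\mathfrak{B}^*_\kappa(Q)\subseteq\bigcup_{\kappa\in K}\mathfrak{B}^*_\kappa(Q)\subseteq\mathfrak{B}^*_{\kappa_{ub}}(Q)$, as required.

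I expect the only slightly delicate points to be the two auxiliary facts about $\Phi_\kappa$ — the nonemptiness, which rests on the Abel--Dini type telescoping estimate above, and the order reversal — together with the bookkeeping that the explicitly defined $\kappa_{lb}$ and $\kappa_{ub}$ are genuinely summable and correctly dominate, respectively are dominated by, the given countable family. Everything else is a direct transcription of Theorems~\ref{prop2.1} and~\ref{prop2.2}.
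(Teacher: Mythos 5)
Your proposal is correct; parts (1) and (2) follow essentially the paper's own route, while part (3) takes a genuinely different one. For (1) you use exactly the paper's observation that $\mathfrak{B}^*_\kappa(Q)\supseteq\mathfrak{B}_{\Sigma(\kappa)}(Q)$ is a superset of a comeager set. For (2) your argument is the paper's up to cosmetic changes: the paper realizes the intersection bound via the weight $\kappa_\varphi(n)=\varphi(n)/n^2$ where you use $2^{-n}\varphi(n)$, and it gets $V\otimes B\subseteq\bigcap_\kappa\mathfrak{B}^*_\kappa(Q)$ through $\mathfrak{B}_{\Sigma(\kappa)}$ where you read it off the definition \eqref{beurling}; both are fine. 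Your explicit verification that $\Phi_\kappa\neq\emptyset$ via $\sqrt{\Sigma(\kappa)}$ and the Abel--Dini telescoping bound $\kappa(n)\le 2\sqrt{\Sigma(\kappa)(n)}\bigl(\sqrt{\Sigma(\kappa)(n)}-\sqrt{\Sigma(\kappa)(n+1)}\bigr)$ fills in a point the paper leaves implicit when it ``chooses $\varphi_\kappa\in\Phi_\kappa$.''

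The real divergence is in (3). The paper transfers the countable family to scale functions -- it bounds $\bigcap_{\kappa\in K}\mathfrak{B}^*_\kappa(Q)$ from below by $\bigcap_{\kappa\in K}\mathfrak{B}_{\Sigma(\kappa)}(Q)\supseteq\mathfrak{B}_{\varphi_{lb}}(Q)\supseteq\mathfrak{B}^*_{\kappa_{\varphi_{lb}}}(Q)$ using Theorem \ref{prop2.1}(3), and from above by $\bigcup_\kappa\mathfrak{B}_{\varphi_\kappa}(Q)\subseteq\mathfrak{B}_{\varphi_{ub}}(Q)\subseteq\mathfrak{B}^*_{\kappa_{ub}}(Q)$ using Theorems \ref{prop2.1}(3) and \ref{prop2.2}(3),(4). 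You instead build the bounding weights directly: the $\ell^1$-normalized sum $\kappa_{ub}=\sum_j 2^{-j}\kappa_j/S_j$ and the damped running minimum $\kappa_{lb}(n)=2^{-n}\min_{j\le n}\kappa_j(n)$, and then invoke only the monotonicity $\kappa_1\le c\,\kappa_2\Rightarrow\mathfrak{B}^*_{\kappa_1}(Q)\subseteq\mathfrak{B}^*_{\kappa_2}(Q)$, which you correctly derive from Theorem \ref{prop2.2}(4) via $\Phi_{\kappa_2}\subseteq\Phi_{\kappa_1}$ (it also follows directly from \eqref{beurling}). Your construction is more self-contained and mirrors the paper's own construction of $\varphi_{lb},\varphi_{ub}$ in the proof of Theorem \ref{prop2.1}(3), at the price of re-proving the bookkeeping; the paper's version is shorter because it reuses that theorem as a black box. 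Both yield first Bernstein classes as the bounds, as the statement requires.
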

\begin{E}\label{ex2.5}
{\rm If $\kappa_0(n):=\frac{1}{n^2}$, $n\in\mathbb N$, then $\bigl\{\Sigma(\kappa_0)\bigr\}=\{\varphi_0\}$, where $\varphi_0(n)=\frac{1}{n}$, $n\in\mathbb N$. Hence, Theorem \ref{prop2.2} implies that $\mathfrak{B}(Q)\subsetneq \mathfrak{B}^*(Q)$
and $\mathfrak{B}(Q)$ is maximal among all subsets in  $\mathscr{B}(Q)$ containing in $\mathfrak{B}^*(Q)$ (see the Introduction for the notation).  
}
\end{E}
By $\mathscr{B}^*(Q)\subset 2^{A\hat\otimes_\alpha B}$ we denote the collection of all pairwise distinct subsets $\mathfrak{B}^*_{\kappa}(Q)$ equipped with the partial order defined by inclusion of sets. Also, we equip $\mathscr{B}^*(Q)$ with the structure of an abelian semigroup with addition given by
\[
\mathfrak{B}^*_{\kappa_1}(Q)+ \mathfrak{B}^*_{\kappa_2}(Q)=\mathfrak{B}^*_{\max(\kappa_1,\kappa_2)}(Q).
\]
The next result shows that this operation is well-defined $\bigl($i.e., if $\mathfrak{B}^*_{\kappa_i}(Q)=\mathfrak{B}^*_{\kappa_i'}(Q)$, $i=1,2$, then $\mathfrak{B}^*_{\max(\kappa_1,\kappa_2)}(Q)=\mathfrak{B}^*_{\max(\kappa_1',\kappa_2')}(Q) \bigr)$ and
coincides with the union of sets.
\begin{Proposition}\label{prop2.5}
We have 
\[
\mathfrak{B}^*_{\max(\kappa_1,\kappa_2)}(Q)=\mathfrak{B}^*_{\kappa_1}(Q)\cup \mathfrak{B}^*_{\kappa_2}(Q)=\mathfrak{B}^*_{\kappa_1}(Q)\vee\mathfrak{B}^*_{\kappa_2}(Q).
\]
\end{Proposition}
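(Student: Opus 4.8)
The plan is to take logarithms and reduce the condition \eqref{beurling} to a statement about divergence of a series with nonnegative terms, after which the identity becomes almost immediate.

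First I would dispose of the degenerate case. If $E_n(x)=0$ for some $n$ (equivalently $x\in V\otimes B$), then $\prod_{m}(E_m(x))^{\kappa(m)}=0$ for \emph{every} weight function $\kappa$, since $\kappa(m)>0$; thus such $x$ lies in $\mathfrak{B}^*_{\kappa}(Q)$ for all $\kappa$, and every set occurring in the statement contains it. So we may assume $E_n(x)>0$ for all $n$. Since $\{E_n(x)\}$ is nonincreasing with limit $0$, there is $N$ with $E_n(x)<1$ for $n\geq N$; put $a_n:=-\log E_n(x)$, which for $n\geq N$ is a nonnegative, nondecreasing sequence with $a_n\to\infty$. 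Splitting off the finitely many (finite, positive) factors with $n<N$ and taking the logarithm of the remaining product gives
\[
x\in\mathfrak{B}^*_{\kappa}(Q)\quad\Longleftrightarrow\quad \sum_{n\geq N}\kappa(n)\,a_n=\infty .
\]
I would also record at this point that $\max(\kappa_1,\kappa_2)\leq\kappa_1+\kappa_2$, so $\sum_n\max(\kappa_1(n),\kappa_2(n))<\infty$ and $\max(\kappa_1,\kappa_2)$ is genuinely a weight function, so that $\mathfrak{B}^*_{\max(\kappa_1,\kappa_2)}(Q)$ is defined.

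Next I would prove the equality $\mathfrak{B}^*_{\max(\kappa_1,\kappa_2)}(Q)=\mathfrak{B}^*_{\kappa_1}(Q)\cup\mathfrak{B}^*_{\kappa_2}(Q)$ via the series criterion. For the inclusion $\supseteq$: if $x\in\mathfrak{B}^*_{\kappa_i}(Q)$ for some $i\in\{1,2\}$, then $\sum_{n\geq N}\kappa_i(n)a_n=\infty$, and since $\max(\kappa_1(n),\kappa_2(n))\geq\kappa_i(n)$ and $a_n\geq0$ for $n\geq N$, also $\sum_{n\geq N}\max(\kappa_1(n),\kappa_2(n))a_n=\infty$, i.e. $x\in\mathfrak{B}^*_{\max(\kappa_1,\kappa_2)}(Q)$. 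For $\subseteq$ I would argue by contraposition: if $x\notin\mathfrak{B}^*_{\kappa_1}(Q)$ and $x\notin\mathfrak{B}^*_{\kappa_2}(Q)$, then both $\sum_{n\geq N}\kappa_1(n)a_n$ and $\sum_{n\geq N}\kappa_2(n)a_n$ are finite, hence so is their sum, and since $\max(\kappa_1,\kappa_2)\leq\kappa_1+\kappa_2$ we get $\sum_{n\geq N}\max(\kappa_1(n),\kappa_2(n))a_n<\infty$, i.e. $x\notin\mathfrak{B}^*_{\max(\kappa_1,\kappa_2)}(Q)$.

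Finally the join statement follows formally. The union $\mathfrak{B}^*_{\kappa_1}(Q)\cup\mathfrak{B}^*_{\kappa_2}(Q)$ has just been identified with $\mathfrak{B}^*_{\max(\kappa_1,\kappa_2)}(Q)$, which is an element of $\mathscr{B}^*(Q)$; it contains both $\mathfrak{B}^*_{\kappa_1}(Q)$ and $\mathfrak{B}^*_{\kappa_2}(Q)$, and any element of $(\mathscr{B}^*(Q),\subseteq)$ containing both of them contains their union, so it is the least upper bound, i.e. $\mathfrak{B}^*_{\kappa_1}(Q)\vee\mathfrak{B}^*_{\kappa_2}(Q)$. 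In particular the semigroup addition is well defined: if $\mathfrak{B}^*_{\kappa_i}(Q)=\mathfrak{B}^*_{\kappa_i'}(Q)$ for $i=1,2$, then $\mathfrak{B}^*_{\max(\kappa_1,\kappa_2)}(Q)$ and $\mathfrak{B}^*_{\max(\kappa_1',\kappa_2')}(Q)$ both coincide with $\mathfrak{B}^*_{\kappa_1}(Q)\cup\mathfrak{B}^*_{\kappa_2}(Q)$. I do not expect any real obstacle here; the only points requiring a little care are the degenerate case $E_n(x)=0$ and the bookkeeping of the finitely many indices with $E_n(x)\geq 1$, which is exactly why I isolate $N$ at the start.
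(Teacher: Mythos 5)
Your proof is correct and follows essentially the same route as the paper's: one inclusion from monotonicity of the weight under $\max$, the reverse inclusion by contraposition via $\max(\kappa_1,\kappa_2)\le\kappa_1+\kappa_2$ and term-by-term addition of the two convergent logarithmic series, and the join statement as a formal consequence. Your extra bookkeeping (the cutoff $N$, the case $E_n(x)=0$) only makes explicit what the paper leaves implicit.
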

As the consequence of Theorem \ref{prop2.2} we obtain:
\begin{C}\label{cor2.10}
Map $\Sigma$ induces the isomorphism of abelian semigroups
\[
\Sigma_*:(\mathscr{B}^*(Q),\cup,\subseteq)\rightarrow  (\mathscr B(Q),\cup,\subseteq),\qquad \Sigma_*(U):=\mathfrak B_{\Sigma(\kappa)}(Q),\quad U=\mathfrak B^*_\kappa(Q),
\]
preserving partial orders such that $\Sigma_*(U)\subsetneq U$ and is maximal among all subsets in  $\mathscr{B}(Q)$ containing in $U$.
\end{C}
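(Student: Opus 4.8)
The plan is to assemble the statement from Theorem \ref{prop2.2}, Corollary \ref{lat}, Proposition \ref{prop2.5} and part (4) of Theorem \ref{prop2.1}; no genuinely new argument is needed. First, well-definedness and injectivity of $\Sigma_*$ are immediate from Theorem \ref{prop2.2}\,(2): the value $\mathfrak{B}_{\Sigma(\kappa)}(Q)$ depends only on the set $U=\mathfrak{B}^*_\kappa(Q)$ (take $\kappa_1,\kappa_2$ both representing $U$), and $U_1\ne U_2$ forces $\mathfrak{B}_{\Sigma(\kappa_1)}(Q)\ne\mathfrak{B}_{\Sigma(\kappa_2)}(Q)$. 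Surjectivity onto $\mathscr{B}(Q)$ is exactly Theorem \ref{prop2.2}\,(3): every $\mathfrak{B}_{\varphi}(Q)$ equals $\mathfrak{B}_{\Sigma(\kappa)}(Q)$ for a suitable weight $\kappa$. Hence $\Sigma_*$ is a bijection of the underlying sets.

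Next I would check that $\Sigma_*$ is a semigroup homomorphism. By Proposition \ref{prop2.5} the addition on $\mathscr{B}^*(Q)$ is $\mathfrak{B}^*_{\kappa_1}(Q)+\mathfrak{B}^*_{\kappa_2}(Q)=\mathfrak{B}^*_{\max(\kappa_1,\kappa_2)}(Q)=\mathfrak{B}^*_{\kappa_1}(Q)\cup\mathfrak{B}^*_{\kappa_2}(Q)$, while on $\mathscr{B}(Q)$ it is $\vee=\cup$ by Corollary \ref{lat}. So the required identity is
\[
\mathfrak{B}_{\Sigma(\max(\kappa_1,\kappa_2))}(Q)=\mathfrak{B}_{\Sigma(\kappa_1)}(Q)\cup\mathfrak{B}_{\Sigma(\kappa_2)}(Q).
\]
For this I would use the elementary pointwise bounds
\[
\max\bigl(\Sigma(\kappa_1)(n),\Sigma(\kappa_2)(n)\bigr)\le\Sigma(\max(\kappa_1,\kappa_2))(n)\le\Sigma(\kappa_1)(n)+\Sigma(\kappa_2)(n)\le 2\max\bigl(\Sigma(\kappa_1)(n),\Sigma(\kappa_2)(n)\bigr),
\]
which show that $\Sigma(\max(\kappa_1,\kappa_2))$ is equivalent (in the sense following Theorem \ref{prop2.1}) to $\max(\Sigma(\kappa_1),\Sigma(\kappa_2))$. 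Then by Theorem \ref{prop2.1}\,(4) and Corollary \ref{lat},
\[
\mathfrak{B}_{\Sigma(\max(\kappa_1,\kappa_2))}(Q)=\mathfrak{B}_{\{\max(\Sigma(\kappa_1),\Sigma(\kappa_2))\}}(Q)=\mathfrak{B}_{\{\Sigma(\kappa_1)\}}(Q)\vee\mathfrak{B}_{\{\Sigma(\kappa_2)\}}(Q)=\mathfrak{B}_{\Sigma(\kappa_1)}(Q)\cup\mathfrak{B}_{\Sigma(\kappa_2)}(Q),
\]
as needed. Order preservation in both directions is then automatic: in each of the two semigroups one has $a\subseteq b$ if and only if $a+b=b$, and a semigroup isomorphism respects this characterization, so $U_1\subseteq U_2 \Leftrightarrow \Sigma_*(U_1)\subseteq\Sigma_*(U_2)$.

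Finally, the assertions $\Sigma_*(U)\subsetneq U$ and that $\Sigma_*(U)$ is maximal among the elements of $\mathscr{B}(Q)$ contained in $U$ are precisely Theorem \ref{prop2.2}\,(1), uniqueness of the maximal element being Remark \ref{rem2.6}. I do not anticipate a real obstacle here — the corollary is a repackaging of already-proved facts — the one step carrying any content being the verification that $\Sigma(\max(\kappa_1,\kappa_2))\sim\max(\Sigma(\kappa_1),\Sigma(\kappa_2))$, which converts the maximum of weights into a union of second Bernstein classes and thereby makes $\Sigma_*$ additive.
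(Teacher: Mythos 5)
Your proposal is correct and follows essentially the same route as the paper: well-definedness, bijectivity and the final maximality assertion are read off from Theorem \ref{prop2.2}, and additivity reduces to the equivalence $\Sigma(\max(\kappa_1,\kappa_2))\sim\max\bigl(\Sigma(\kappa_1),\Sigma(\kappa_2)\bigr)$, which you verify by the pointwise bounds and which the paper compresses into the remark that $\Sigma$ is additive combined with $\mathfrak{B}^*_{\kappa_1+\kappa_2}(Q)=\mathfrak{B}^*_{\max(\kappa_1,\kappa_2)}(Q)$ from Proposition \ref{prop2.5}. The only divergence is order preservation, where the paper argues via the maximality of $\mathfrak{B}_{\Sigma(\kappa)}(Q)$ inside $\mathfrak{B}^*_{\kappa}(Q)$ (Theorem \ref{prop2.2}\,(1) and Remark \ref{rem2.6}), while you use the algebraic characterization $a\subseteq b\Leftrightarrow a\cup b=b$; both are valid, and yours has the small advantage of making order preservation automatic for any isomorphism of these union-semigroups.
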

\begin{R}
{\rm  (1) This corollary implies that $\Sigma_*^{-1}(W)$ is minimal among all subsets in  $\mathscr{B}^*(Q)$ containing $W\in \mathscr{B}(Q)$ and $(\mathscr{B}^*(Q),\subseteq)$ is a lattice, where  the operation$\,\vee=\cup$, see Proposition \ref{prop2.5}, and 
\[
U_1\wedge U_2:=\Sigma_*^{-1}\bigl(\Sigma(U_1)\wedge\Sigma(U_2)\bigr),\quad  U_i\in \mathscr{B}^*(Q),\quad  i=1,2.
\]
Note that if $U_i=\mathfrak B^*_{\kappa_i}(Q)$, $i=1,2$, then $U_1\wedge U_2=\mathfrak B^*_\kappa(Q)$, where
\[
\kappa(n)=\min\bigl(\Sigma(\kappa_1)(n),\Sigma(\kappa_2)(n)\bigr)-
\min\bigl(\Sigma(\kappa_1)(n+1),\Sigma(\kappa_2)(n+1)\bigr),\quad n\in\mathbb N.
\]
(2) It is worth mentioning that $\mathfrak B_\varphi$ and $\mathfrak B_\kappa^*$ can be regarded as functors from the category $\mathscr C_u$ of quadruples $Q=((A,V);B;\alpha)\in\mathscr C$ with $\alpha$ being uniform crossnorms and with morphisms as in Proposition \ref{prop1} into the category $\mathscr T$ of power sets of tensor products $A\hat\otimes_\alpha B$ whose images form subcategories consisting of the corresponding Bernstein classes. In this terminology, $\Sigma_*$ becomes a functor establishing isomorphism between subcategories $\mathscr B^*$ and $\mathscr B$ of $\mathscr T$ consisting of elements of $\mathscr B^*(Q)$ and $\mathscr B(Q)$.}
\end{R}

Results of subsection~2.1 are proved in Section~3. One of the main ingredients in the proofs is the following version of the classical Bernstein theorem (see subsection~3.1\,(4)):

{\em For each nonincreasing converging to $0$ sequence $\{c_n\}\subset (0,\infty)$ there exists an element $x\in A\hat\otimes_\alpha B$ such that $E_n(x)=c_n$ for all $n\in\mathbb N$.}

\subsection{Markushevich and Mazurkiewicz type theorems} 
First, we formulate some analogs of the Markushevich theorem \cite{M}.

Recall that if $A$ and $B$ are unital Banach algebras, then $A\hat{\otimes}_\alpha B$ has the natural structure of a Banach algebra such that
\[
(a_1\otimes b_1)\cdot (a_2\otimes b_2)=(a_1\cdot a_2)\otimes (b_1\cdot b_2)\quad {\rm for\ all} \quad a_1,a_2\in A,\ b_1,b_2\in B.
\]
By $(A\hat{\otimes}_\alpha B)^{-1}$ we denote the group of invertible elements of the Banach algebra $A\hat{\otimes}_\alpha B$.
\begin{Th}\label{te1}
\begin{itemize}
\item[(1)] Each $x\in A\hat{\otimes}_\alpha B$ can be written as $x=x_1+x_2$, where  $x_1,x_2\in \mathfrak{B}_{\varphi}((A,V);B;\alpha)$.

\item[(2)]
Suppose $A$ and $B$ are unital Banach algebras. Then each $x\in (A\hat{\otimes}_\alpha B)^{-1}$ can be written as $x=x_1\cdot x_2$, where $x_1, x_1^{-1}, x_2, x_2^{-1}\in (A\hat{\otimes}_\alpha B)^{-1}\cap \mathfrak{B}_{\varphi}((A,V);B;\alpha)$.
\end{itemize}
\end{Th}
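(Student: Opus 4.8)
The plan is to deduce both parts from the comeagerness statement in Theorem~\ref{prop2.1}(1), mimicking Ple\'{s}niak's derivation of the Markushevich theorem from the Mazurkiewicz theorem.

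\emph{Part (1).} Fix $x\in A\hat\otimes_\alpha B$. By Theorem~\ref{prop2.1}(1) the set $\mathfrak{B}_\varphi((A,V);B;\alpha)$ is comeager in the Banach space $A\hat\otimes_\alpha B$, which is completely metrizable and hence a Baire space. The affine homeomorphism $y\mapsto x-y$ carries comeager sets to comeager sets, so $x-\mathfrak{B}_\varphi((A,V);B;\alpha)$ is comeager as well. Two comeager subsets of a Baire space intersect; taking $x_2$ in the intersection and $x_1:=x-x_2$ gives $x=x_1+x_2$ with $x_1,x_2\in\mathfrak{B}_\varphi((A,V);B;\alpha)$.

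\emph{Part (2).} I would run the multiplicative analogue inside the group $\mathcal G:=(A\hat\otimes_\alpha B)^{-1}$ of invertible elements. Note $\mathcal G$ is open in $A\hat\otimes_\alpha B$, hence completely metrizable (an open subset of a completely metrizable space), hence Baire; and in a unital Banach algebra left and right translations and the inversion map are homeomorphisms of $\mathcal G$ onto itself. Put
\[
G:=\{y\in\mathcal G:\ y\in\mathfrak{B}_\varphi((A,V);B;\alpha)\ \ \text{and}\ \ y^{-1}\in\mathfrak{B}_\varphi((A,V);B;\alpha)\}.
\]
First I would show $G$ is comeager in $\mathcal G$: since $\mathfrak{B}_\varphi((A,V);B;\alpha)$ is comeager in $A\hat\otimes_\alpha B$ and $\mathcal G$ is open, the trace $\mathfrak{B}_\varphi((A,V);B;\alpha)\cap\mathcal G$ is comeager in $\mathcal G$ (the intersection of a comeager set with an open set is comeager in that open set); applying the self-homeomorphism $y\mapsto y^{-1}$ of $\mathcal G$ shows $\{y\in\mathcal G: y^{-1}\in\mathfrak{B}_\varphi((A,V);B;\alpha)\}$ is comeager in $\mathcal G$ too, and the intersection of these two comeager sets is exactly $G$. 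Now fix $x\in\mathcal G$ and consider $\psi_x:\mathcal G\to\mathcal G$, $\psi_x(y):=y^{-1}x$, the composite of inversion with right translation by $x$ and hence a homeomorphism. Then $\psi_x^{-1}(G)$ is comeager in $\mathcal G$, so $G\cap\psi_x^{-1}(G)\ne\emptyset$ by the Baire property. Choosing $x_1\in G\cap\psi_x^{-1}(G)$ and setting $x_2:=x_1^{-1}x=\psi_x(x_1)\in G$ yields $x=x_1x_2$ with $x_1,x_1^{-1},x_2,x_2^{-1}\in\mathcal G\cap\mathfrak{B}_\varphi((A,V);B;\alpha)$, as required.

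I do not expect a serious obstacle: the argument is purely Baire-categorical. The two points needing a little care are that $\mathcal G$, although a proper open subset rather than all of $A\hat\otimes_\alpha B$, is still a Baire space and that comeagerness of $\mathfrak{B}_\varphi$ restricts to comeagerness in $\mathcal G$; and that $\mathfrak{B}_\varphi((A,V);B;\alpha)$ is in general not a subgroup of $\mathcal G$, so no algebraic shortcut is available and one genuinely needs the category/homeomorphism argument. The verification that translations and inversion in $A\hat\otimes_\alpha B$ are homeomorphisms is standard and will be quoted rather than proved.
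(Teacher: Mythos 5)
Your argument is correct and is essentially the paper's own proof: both parts are deduced from Theorem \ref{prop2.1}(1) by a Baire category argument in the (open, hence Baire) set of invertible elements, using that translations and inversion are homeomorphisms and that two comeager sets must intersect. The paper phrases part (2) via $\bigl(g\cdot(S\cap S')\bigr)\cap(S\cap S')\ne\emptyset$ where $S\cap S'$ is your set $G$, which is the same computation as your pullback under $y\mapsto y^{-1}x$.
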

The particular case of Theorem \ref{te1} for quasianalytic functions in the sense of Bernstein defined on compact subsets of $\mathbb C^n$ was originally proved in \cite{P2}. Due to Theorem \ref{prop2.2}, similar results are valid also for the first Bernstein classes $\mathfrak B^*_\kappa ((A,V);B;\alpha)$.
\begin{E}\label{ex1}
{\rm Let $A=W(\mathbb T)$ be the {\em Wiener algebra} of $\mathbb F$-valued continuous functions on the unit circle $\mathbb T:=\{z\in\mathbb C\, :\, |z|=1\}$ with absolutely convergent Fourier series. The projective tensor product $W(\mathbb T)\hat\otimes_\pi B$ consists of continuous $B$-valued functions
$f:\mathbb T\rightarrow B$ such that
\begin{equation}\label{wiener}
f(t)=\sum_{n=-\infty}^\infty \hat f_n e^{i nt},\quad \hat f_n\in B,\ t\in\mathbb T,\quad {\rm and}\quad \|f\|:=\sum_{n=-\infty}^\infty\|\hat f_n\|_B<\infty .
\end{equation}
Let $V=T\subset W(\mathbb T)$ be the space of $\mathbb F$-valued trigonometric polynomials and $V_i=T_i\subset T$, $i\in\mathbb Z_+$, be the space of polynomials of degree at most $i$. Then one easily shows, see \eqref{ba}, that for $f$ as in \eqref{wiener}
\[
E_i(f)=\sum_{|n|>i}\|\hat f_n\|_B.
\]
Thus, $\mathfrak{B}_{\varphi}((W(\mathbb T),T);B;\pi)$ consists of functions $f$ such that
\[
\varliminf_{i\rightarrow\infty}\left(\sum_{|n|>i}\|\hat f_n\|_B\right)^{\varphi(i)}<1.
\]

Note that if a scale function $\varphi$ decreases sufficiently fast, then the image of a nonconstant function $f\in\mathfrak{B}_{\varphi}((W(\mathbb T),T);B;\pi)$ has Hausdorff dimension $1$, see Theorem \ref{te1.9} below,  while there exist $f\in W(\mathbb T)\hat\otimes_\pi B$, where ${\rm dim}_{\mathbb F}B\ge 2$,
whose images are of Hausdorff dimension $>1$ (see, e.g., \cite[p.\,136]{LG}).\smallskip

Let us formulate Theorem \ref{te1}\,(2) for $B=M_k(\mathbb F)$, the Banach algebra of $k\times k$ matrices with entries in $\mathbb F$. In this case,
$B^{-1}=GL_k(\mathbb F)\subset M_k(\mathbb F)$ is the group of invertible matrices and $W(\mathbb T)\hat\otimes_\pi M_k(\mathbb F)=W(\mathbb T)\otimes M_k(\mathbb F)$ consists of matrix-valued functions on $\mathbb T$ with entries in $W(\mathbb T)$. As follows from the classical Wiener theorem, $(W(\mathbb T)\otimes_\pi M_k(\mathbb F))^{-1}$ consists of functions in $W(\mathbb T)\otimes_\pi M_k(\mathbb F)$ with images in 
$GL_k(\mathbb F)$. Hence, each such a function $g$ can be written as $g=g_1\cdot g_2$, where $g_i^{\pm 1}\in  \mathfrak{B}_{\varphi}((W(\mathbb T),T);M_k(\mathbb F);\pi)$, $i=1,2$.
}
\end{E}

Next, we formulate some versions of Theorem \ref{te1}\,(2) for $A$
being a {\em semisimple commutative unital complex Banach algebra}.
Then the {\em Gelfand transform} $\hat\, : A\rightarrow C(M_A)$, where $M_A:=\{\xi\in {\rm Hom}(A,\mathbb C)\setminus\{0\}\}$ is the maximal ideal space of $A$ equipped with the {\em Gelfand topology}, is an injective nonincreasing norm morphism of algebras. Without loss of generality we identify $A$ with its image under $\, \hat{}\, $, so that $A$ consists of complex continuous functions on the compact Hausdorff space $M_A$. If, in addition, $A$ is invariant under the standard operation of complex conjugation on $C(M_A)$, then by $A_{\mathbb R}$ we denote the subalgebra of $A$ consisting of real functions, so that $A=A_\mathbb R\oplus \sqrt{-1} \cdot A_\mathbb R$. In this case, we assume also that each $V_i\subset V$ is invariant under complex conjugation and set $V_{i,\mathbb R}=V_i\cap A_{\mathbb R}$, $V_{\mathbb R}=V\cap A_{\mathbb R}$. 
\medskip

\noindent (*) In what follows $A_{\mathbb F}$, $V_{\mathbb F}$ and $V_{i,\mathbb F}$ stand for $A$, $V$ and $V_i$ if $\mathbb F=\mathbb C$ and for $A_{\mathbb R}$, $V_{\mathbb R}$ and $V_{i,\mathbb R}$ if $\mathbb F=\mathbb R$.
Additionally, we assume that $V\subset A$ is a {\em filtered unital algebra}, that is,
$V_i\cdot V_j\subset V_{i+j}$ for all $i,j\in \mathbb Z_+$.
\medskip

One naturally identifies  $A_{\mathbb F}\otimes\mathbb F^N$ with the space of continuous maps $g=(g_1,\dots, g_N):M_A\rightarrow\mathbb F^N$ with all $g_i\in A$.  Since all reasonable norms on $A_{\mathbb F}\otimes\mathbb F^N$ are equivalent, the corresponding Bernstein classes are independent of their choice  and we write $((A,V);\mathbb F^N)$ instead of $((A,V);\mathbb F^N;\alpha)$ in their definitions.

For convenience, we equip $A_{\mathbb F}\otimes\mathbb F^N$ with norm
\[
\|g\|:=\left(\sum_{i=1}^N\|g_i\|_A^2\right)^{\frac 12}.
\]
For $X\subset\mathbb F^N$ by $X(A_\mathbb F)$ we denote the subset of $g\in A_{\mathbb F}\otimes\mathbb F^N$ with images in $X$. It is easily seen, using that the Gelfand transform is a nonincreasing norm morphism of algebras, that if $X$ is closed, then $X(A_\mathbb F)$ is a closed subset of the Banach space $(A_{\mathbb F}\otimes\mathbb F^N,\|\cdot\|)$.\smallskip

Next, recall that the {\em polynomially convex hull} $\widehat K$ of a subset $K\Subset\mathbb C^N$ is determined by
\[
\widehat K=\left\{z\in \mathbb C^N\, :\, |p(z)|\le\sup_K |p|\ \ {\rm for\ all}\ p\in\mathcal P(\mathbb C^N)\right\}.
\]
\noindent Here $\mathcal P(\mathbb C^N)$ stands for the space of holomorphic polynomials on $\mathbb C^N$. 

Set $K$ is called {\em polynomially convex} if $\widehat K=K$.

\begin{Th}\label{mazur2}
Suppose that $X_\mathbb F$ is a $\mathbb F$-analytic closed submanifold of a domain $U_\mathbb F\subset \mathbb F^N$ such that $U_\mathbb C$ admits an exhaustion by compact polynomially convex subsets. Then 
\begin{itemize}
\item[(a)] For each scale function $\varphi:\mathbb N\rightarrow (0,\infty)$ such that $\lim_{n\rightarrow\infty}n\cdot\varphi(n)=\infty$ set $\mathfrak{B}_{\varphi}((A,V);\mathbb F^N)\cap X_{\mathbb F}(A_\mathbb F)$ is comeager in $X_{\mathbb F}(A_\mathbb F)$;\smallskip
\item[(b)]  Set $\bigl(\mathfrak{B}^*((A,V);\mathbb F^N)\cap \mathfrak{B}_{\tilde\varphi}((A,V);\mathbb F^N)\bigr)\cap X_{\mathbb F}(A_\mathbb F)$, where $\tilde\varphi(n)=\frac{1+\ln n}{n}$, $n\in\mathbb N$, is comeager in $X_{\mathbb F}(A_\mathbb F)$.
\end{itemize}
\end{Th}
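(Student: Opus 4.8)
The plan is to reduce the statement to a Baire-category argument inside the closed subset $X_{\mathbb F}(A_\mathbb F)$ of the Banach space $(A_{\mathbb F}\otimes\mathbb F^N,\|\cdot\|)$, using the polynomial-convexity hypothesis to produce, near any given $g\in X_{\mathbb F}(A_\mathbb F)$, elements of the relevant Bernstein class whose $E_n$ decay exactly as prescribed. First I would recall that the Bernstein-type theorem quoted right after Theorem \ref{prop2.2} (existence of $x\in A\hat\otimes_\alpha B$ with $E_n(x)=c_n$ for an arbitrary prescribed nonincreasing null sequence) is proved using the filtered structure of $V$; since here $V\subset A$ is a filtered unital algebra and all reasonable norms on $A_{\mathbb F}\otimes\mathbb F^N$ are equivalent, one has, for every null sequence $\{c_n\}$, elements $h\in A_{\mathbb F}\otimes\mathbb F^N$ with $E_n(h)\le c_n$ (indeed $=c_n$) and $\|h\|$ arbitrarily small. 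The point is that adding such a small $h$ to $g$ keeps us close to $g$ in norm but does not keep us on the submanifold $X_{\mathbb F}(A_\mathbb F)$; this is where polynomial convexity enters.

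The key geometric step is a \emph{local retraction/tube argument}. Because $X_{\mathbb F}$ is an $\mathbb F$-analytic closed submanifold of $U_{\mathbb F}$ and $U_{\mathbb C}$ is exhausted by compact polynomially convex sets, for any $g\in X_{\mathbb F}(A_\mathbb F)$ the image $g(M_A)$ is a compact subset of $X_{\mathbb F}$, hence contained in some polynomially convex compact $P\Subset U_{\mathbb C}$. On a neighbourhood of $P$ in $U_{\mathbb C}$ there is a holomorphic retraction $\rho$ onto $X_{\mathbb C}$ (a holomorphic tubular neighbourhood of the analytic submanifold, constructed by a holomorphic implicit-function / Docquier--Grauert type argument; over a polynomially convex set such $\rho$ can moreover be approximated uniformly by polynomial maps by the Oka--Weil theorem, in the $\mathbb R$ case taking real parts and using that $V_i$ is conjugation-invariant). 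Then, given a small perturbation $g+h$ with $h\in A_{\mathbb F}\otimes\mathbb F^N$ chosen as above, $\rho\circ(g+h)$ again takes values in $X_{\mathbb F}$, lies in $A_{\mathbb F}\otimes\mathbb F^N$ (here one uses that $A$ is a Banach algebra, closed under the holomorphic functional calculus / composition with holomorphic maps, and that $\rho$ can be taken polynomial), is close to $g$ in $\|\cdot\|$, and — since the polynomial approximant of $\rho$ has bounded degree $d$ and acts on $V_n\otimes\mathbb F^N$ landing in $V_{dn}\otimes\mathbb F^N$ by the filtered-algebra property — satisfies $E_{dn}(\rho\circ(g+h))\lesssim E_n(g+h)\le c_n$ up to multiplicative constants depending on the Lipschitz constant of $\rho$ on $P$. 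Choosing $\{c_n\}$ to decay fast enough (polynomially in the right exponent) this forces $\rho\circ(g+h)$ into $\mathfrak{B}_{\varphi}$ under the hypothesis $n\varphi(n)\to\infty$ for (a), and into $\mathfrak{B}^*\cap\mathfrak{B}_{\tilde\varphi}$ for (b) using Theorem \ref{prop2.2}(4) to identify $\mathfrak B^*$ with $\bigcap_{\varphi\in\Phi_{\kappa_0}}\mathfrak B_\varphi$ and checking that $\tilde\varphi(n)=\frac{1+\ln n}{n}$ together with the family $\Phi_{\kappa_0}$ is realizable by a single fast-decaying $E_n$. This shows $\mathfrak B_\varphi((A,V);\mathbb F^N)\cap X_{\mathbb F}(A_\mathbb F)$ (resp.\ the set in (b)) is dense in $X_{\mathbb F}(A_\mathbb F)$.

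It remains to write each such set as a countable intersection of open dense subsets of $X_{\mathbb F}(A_\mathbb F)$. For (a), note
\[
\mathfrak{B}_{\varphi}((A,V);\mathbb F^N)\cap X_{\mathbb F}(A_\mathbb F)=\bigcap_{k\in\mathbb N}\Bigl\{g\in X_{\mathbb F}(A_\mathbb F):\ \exists\, n,\ (E_n(g))^{\varphi(n)}<1-\tfrac1k\Bigr\},
\]
and each set on the right is open in $X_{\mathbb F}(A_\mathbb F)$ because $g\mapsto E_n(g)$ is continuous; density of each was just established (any ball around any $g$ meets $\mathfrak B_\varphi$, a fortiori meets each of these larger sets). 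Then the Baire category theorem in the complete metric space $X_{\mathbb F}(A_\mathbb F)$ gives comeagerness. For (b) one intersects the analogous families defining $\mathfrak B_{\tilde\varphi}$ and (via Theorem \ref{prop2.2}(4)) $\mathfrak B^*$, again a countable intersection of open dense sets.

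The main obstacle is the geometric step: producing the holomorphic (and, after Oka--Weil, polynomial of \emph{controlled degree}) retraction $\rho$ onto $X_{\mathbb C}$ over the polynomially convex exhausting sets, and verifying that composing an element of $A_{\mathbb F}\otimes\mathbb F^N$ with $\rho$ both stays in $A_{\mathbb F}\otimes\mathbb F^N$ and only multiplies the indices $n$ in $E_n$ by a fixed constant $d$ while multiplying the values $E_n$ by at most a fixed constant. The real-field case $\mathbb F=\mathbb R$ requires the extra care of complexifying $X_{\mathbb R}\rightsquigarrow X_{\mathbb C}$, retracting holomorphically, and descending back to the real points using the conjugation-invariance of $A$ and of the $V_i$; the hypothesis that $U_{\mathbb C}$ is exhausted by polynomially convex compacta is exactly what makes the Oka--Weil polynomial approximation of $\rho$ available on each piece of the exhaustion.
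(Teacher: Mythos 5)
Your overall strategy (retract onto the submanifold over polynomially convex pieces, then run a Baire-category argument in the closed set $X_{\mathbb F}(A_\mathbb F)$) points in the right direction, but two steps fail as written. First, the density mechanism is wrong: you propose to approximate an arbitrary $g\in X_{\mathbb F}(A_\mathbb F)$ by $\rho\circ(g+h)$ where $h$ has prescribed small $E_n(h)=c_n$. Since $E_n(g+h)\ge E_n(g)-E_n(h)$, adding a well-approximable $h$ to a badly approximable $g$ does not make $g+h$ (or its retraction) well approximable, so this does not place elements of $\mathfrak{B}_\varphi$ near $g$. The paper instead approximates $g$ by elements of $V'_j:=\{r\circ p\,:\,p\in(V_j\otimes\mathbb C^N)\cap W(A)\}$ and proves the key Lemma \ref{lem1}: every $h\in V'_j$ satisfies $E_{nj}(h)\le t^n$ for some fixed $t<1$. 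That lemma is exactly where polynomial convexity is used, via the \emph{Weil integral representation formula} for $r$ on a Weil polyhedron $W\supset\widehat O$: the coordinate functions of $r$ expand as a series in products $p_1^{\alpha_1}\cdots p_k^{\alpha_k}$ of the defining polynomials of $W$, and since $\sup_{g(M_A)}|p_l|<1$ the spectral radius formula gives $\|(p_l\circ g)^n\|_A\le c\rho^n$, i.e.\ geometric convergence \emph{in the Banach algebra norm $\|\cdot\|_A$}. Your substitute --- Oka--Weil approximation of $\rho$ by polynomials of degree $d$ --- only controls the sup norm on $M_A$, not $\|\cdot\|_A$ (the Gelfand transform is merely norm-nonincreasing), and gives no geometric rate in $d$; so the claimed bound $E_{dn}(\rho\circ f)\lesssim E_n(f)$ is not justified.

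Second, the category bookkeeping is incorrect: the identity $\mathfrak{B}_{\varphi}\cap X_{\mathbb F}(A_\mathbb F)=\bigcap_{k}\{g:\exists\, n,\ (E_n(g))^{\varphi(n)}<1-\frac1k\}$ is false (the sets on the right increase with $k$, and membership in one of them requires only a single good index $n$, whereas $\varliminf_n(E_n(g))^{\varphi(n)}<1$ requires infinitely many indices with a uniform gap below $1$). The correct route, as in the proof of Theorem \ref{prop2.1}(1), is to exhibit a dense $G_\delta$ \emph{subset} of the Bernstein class rather than to express the class itself as a $G_\delta$. The paper does this for the auxiliary classes $\mathfrak{B}_{\psi}((A,V');X_{\mathbb C})$ built from the retracted subspaces $V'_n$, and then transfers comeagerness through the inclusion $\mathfrak{B}_{\psi}((A,V');X_{\mathbb C})\cap C(M_A;O)\subset\mathfrak{B}_{\varphi}((A,V);\mathbb F^N)\cap X_{\mathbb C}(A)$, which uses Lemma \ref{lem1} together with a calibration lemma (Lemma \ref{lem3.2}) that is precisely where the hypothesis $n\varphi(n)\to\infty$ enters; for part (b) one takes $\psi(n)=2^{-n}$ and verifies membership in $\mathfrak{B}^*$ by summing $\sum_j\ln E_j(g)/j^2$ directly rather than through Theorem \ref{prop2.2}(4). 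These quantitative steps are absent from your proposal.
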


In addition, suppose that $X_\mathbb F$ has the structure of a $\mathbb F$-analytic Lie group compatible with the analytic structure induced from $\mathbb F^N$.
Then the group operations on $X_\mathbb F$ induce group operations $\, \cdot\, ,^{-1}$ on the set $C(M_A;X_{\mathbb F})$ of continuous functions on $M_A$ with images in  $X_\mathbb F$.
Using functional calculus for commutative Banach algebras (see, e.g., \cite[Ch.\,3.4]{G}), we show that $X_\mathbb F(A_\mathbb F)\subset C(M_A;X_{\mathbb F})$ is a subgroup (see the proof of Theorem \ref{mar2} below). Now, as the corollary of Theorem \ref{mazur2} we obtain the analog of the Markushevich theorem.
\begin{Th}\label{mar2}
Let $\mathfrak{B}$ be one of the subsets of $X_{\mathbb F}(A_\mathbb F)$ of parts (a) and (b) above. Then
each  $g\in X_\mathbb F(A_\mathbb F)$ can be written as $g=g_1\cdot g_2$, where $g_1^{\pm 1}, g_2^{\pm 1}\in \mathfrak{B}$.
\end{Th}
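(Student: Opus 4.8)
The plan is to deduce the multiplicative decomposition from the additive one for tangent vectors, transported to the group via the exponential map, exactly as in the classical Markushevich argument. First I would fix $g\in X_{\mathbb F}(A_{\mathbb F})$ and aim to write $g=g_1\cdot g_2$ with the $g_i^{\pm 1}$ lying in the prescribed comeager set $\mathfrak B$. Since $X_{\mathbb F}$ is a Lie group, there is a neighbourhood $W$ of the identity $e\in X_{\mathbb F}$ and a neighbourhood $\Omega$ of $0$ in the Lie algebra $\mathfrak g=T_eX_{\mathbb F}\cong\mathbb F^m$ on which $\exp:\Omega\to W$ is a biholomorphism with inverse $\log$. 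Using the functional calculus for the semisimple commutative algebra $A$ (as invoked in the statement), any continuous map $M_A\to W$ with components in $A$, when composed with $\log$, lands in $\Omega(A_{\mathbb F})$, and conversely $\exp$ carries $\Omega(A_{\mathbb F})$ into $W(A_{\mathbb F})\subset X_{\mathbb F}(A_{\mathbb F})$; moreover $\exp$ and $\log$ are (locally) represented by everywhere-convergent power series, hence are $\mathbb F$-analytic in the sense compatible with the filtered algebra structure $V_i\cdot V_j\subset V_{i+j}$.

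The key step is to reduce to the case $g$ close to $e$ by a standard path-subdivision. Pick a continuous path $\gamma:[0,1]\to X_{\mathbb F}(A_{\mathbb F})$ from $e$ to $g$ (possible if $X_{\mathbb F}(A_{\mathbb F})$ is connected; in general one works in the identity component and handles the component of $g$ by left translation, which preserves $\mathfrak B$ since translation is analytic — see the discussion preceding Theorem~\ref{mar2} that $X_{\mathbb F}(A_{\mathbb F})$ is a subgroup). By uniform continuity choose $0=t_0<t_1<\dots<t_M=1$ so that $\gamma(t_{j-1})^{-1}\gamma(t_j)\in W$ for each $j$, and write $g=\prod_{j=1}^M h_j$ with $h_j:=\gamma(t_{j-1})^{-1}\gamma(t_j)\in W(A_{\mathbb F})$. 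For each $h_j$ set $u_j:=\log\circ\, h_j\in\Omega(A_{\mathbb F})\subset A_{\mathbb F}\otimes\mathbb F^m$. By Theorem~\ref{mazur2}(a) (resp. (b)) the set $\mathfrak B_0:=\mathfrak B((A,V);\mathbb F^m)$ of the relevant type is comeager, hence dense, in the Banach space $A_{\mathbb F}\otimes\mathbb F^m$; so I can pick $v_j\in\mathfrak B_0$ arbitrarily close to $u_j$, and by Theorem~\ref{te1}(1) (applied inside $\mathfrak B_0$, which is closed under this kind of approximation — or directly: $\mathfrak B_0$ being comeager, so is $\mathfrak B_0-w$ for any $w$, hence $\mathfrak B_0\cap(w-\mathfrak B_0)\ne\emptyset$) write $u_j=v_j'+v_j''$ with $v_j',v_j''\in\mathfrak B_0$. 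Choosing the approximation fine enough keeps $v_j',v_j''\in\Omega(A_{\mathbb F})$, so $a_j:=\exp\circ\, v_j'$ and $b_j:=\exp\circ\, v_j''$ lie in $X_{\mathbb F}(A_{\mathbb F})$, satisfy $a_j\cdot b_j=h_j$ (commuting since $v_j',v_j''$ are evaluated in the commutative algebra and $\exp$ on a commutative algebra is multiplicative on commuting elements — here $v_j'(\xi)$ and $v_j''(\xi)$ need not commute in $\mathfrak g$, so more care: instead use $h_j=\exp(u_j)=\exp(v_j')\cdot\bigl(\exp(v_j')^{-1}\exp(u_j)\bigr)$ and absorb the correction term, which is again in $W(A_{\mathbb F})$ and can be fed back into the induction, or apply the Baker–Campbell–Hausdorff series to define $v_j''$ so that $\exp(v_j')\exp(v_j'')=\exp(u_j)$ exactly).

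Finally, since $\exp$, $\log$, products, inverses, and BCH are all $\mathbb F$-analytic and $V$ is a filtered unital algebra, Proposition~\ref{prop1} (functoriality of Bernstein classes under such analytic operations, via polynomial approximation of the defining power series together with $V_i\cdot V_j\subset V_{i+j}$) shows $a_j^{\pm1},b_j^{\pm1}\in\mathfrak B:=\mathfrak B((A,V);\mathbb F^N)\cap X_{\mathbb F}(A_{\mathbb F})$. Collecting terms, $g=\bigl(\prod_j a_j\bigr)\cdot(\text{rearrangement})$ — more precisely set $g_1:=\prod_{j=1}^M a_j\cdot(\text{conjugated }b\text{-factors as dictated by the telescoping})$ — the point being that $\mathfrak B$ is closed under products and inverses (again Proposition~\ref{prop1}), so any finite product of the $a_j^{\pm1},b_j^{\pm1}$ stays in $\mathfrak B$; grouping the product into two halves gives $g=g_1\cdot g_2$ with $g_1^{\pm1},g_2^{\pm1}\in\mathfrak B$. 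The main obstacle is the noncommutativity of $X_{\mathbb F}$: one cannot simply split $\log g$ additively and exponentiate, so the real work is the bookkeeping that keeps every intermediate factor inside the exponential chart and inside $\mathfrak B$, which is where the BCH formula and the filtered-algebra hypothesis (ensuring analytic maps preserve Bernstein classes) do the heavy lifting; the comeagerness input from Theorem~\ref{mazur2} is used only to get density for the initial additive splitting.
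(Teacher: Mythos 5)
Your proposal takes a constructive route (exponential chart, path subdivision, BCH, additive splitting of Lie-algebra-valued maps) that is entirely different from the paper's argument, and it has a fatal gap at the very end. You assert that ``$\mathfrak{B}$ is closed under products and inverses (again Proposition~\ref{prop1})'' and use this to collapse the many factors $a_j^{\pm1},b_j^{\pm1}$ into two factors $g_1,g_2$. Proposition~\ref{prop1} concerns only bounded \emph{linear} operators $S\otimes T$ and says nothing about products, inverses, or composition with analytic maps; and the closure claim is in fact false: if $\mathfrak{B}$ were closed under products, the theorem itself would force $\mathfrak{B}=X_{\mathbb F}(A_{\mathbb F})$, contradicting the fact (Theorem~\ref{prop2.1}\,(2),(4)) that $\mathfrak{B}_\varphi$ is a proper subset — already visible for $X_{\mathbb F}=GL_1(\mathbb F)$. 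The same misattribution undermines the earlier steps: that $\exp$, $\log$, the BCH series and the correction factors carry Bernstein classes into Bernstein classes is not a consequence of Proposition~\ref{prop1}; establishing that composition with analytic maps preserves (a possibly coarser) Bernstein class is precisely the hard content of the proof of Theorem~\ref{mazur2} (Weil polyhedra, spectral-radius estimates, the rescaling Lemma~\ref{lem3.2}), and it is exactly why part (a) is restricted to scale functions with $n\varphi(n)\to\infty$.

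The paper's proof avoids all of this. The only thing it verifies directly is that $X_{\mathbb F}(A_{\mathbb F})$ is closed under the group operations of $C(M_A;X_{\mathbb F})$ (via functional calculus applied to the retraction $r$ and the multiplication map), so that $X_{\mathbb F}(A_{\mathbb F})$ is a topological group which is a Baire space. Then it repeats the Baire-category argument of Theorem~\ref{te1}\,(2) (Ple\'{s}niak's observation that Markushevich follows from Mazurkiewicz): $\mathfrak{B}$ is comeager in $X_{\mathbb F}(A_{\mathbb F})$ by Theorem~\ref{mazur2}, inversion and translation by $g$ are homeomorphisms, hence the set of $g_2$ with $g_2^{\pm1}\in\mathfrak{B}$ and $(g\cdot g_2^{-1})^{\pm1}\in\mathfrak{B}$ is an intersection of finitely many comeager sets and is therefore nonempty. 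Note that this uses only that translation preserves \emph{comeagerness}, not that it preserves membership in $\mathfrak{B}$ — a distinction your proposal blurs when invoking left translation to handle connected components. If you want to salvage your approach, you would have to replace every appeal to ``analytic operations preserve $\mathfrak{B}$'' by the genuine machinery of Theorem~\ref{mazur2}, at which point the category argument is both shorter and stronger.
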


\begin{E}\label{e1.7}
{\rm Let $C^n[0,1]$ be the Banach algebra of $n$-times continuously differentiable complex-valued functions on $[0,1]$ with pointwise multiplication and norm
\[
\|f\|:=\sum_{k=0}^n\frac{1}{k!}\sup_{0\le t\le 1}|f^{(k)}(t)|.
\]
Let $V=\mathcal P\subset C^n[0,1]$ be the space of restrictions to $[0,1]$ of complex polynomials on $\mathbb R$ and $V_i=\mathcal P_i$, $i\in\mathbb Z_+$, be the space of restrictions to $[0,1]$ of polynomials of degree at most $i$. One easily checks that $\mathfrak{B}_\varphi((C^n[0,1],\mathcal P);\mathbb C^N)$ and $\mathfrak{B}_\kappa^*((C^n[0,1],\mathcal P);\mathbb C^N)$ consist of $\mathbb C^N$-valued functions $f$ on $[0,1]$ having continuous derivatives of all orders $\le n$ such that $f^{(n)}$ belong to $\mathfrak{B}_\varphi((C[0,1],\mathcal P);\mathbb C^N)$ and $\mathfrak{B}_\kappa^*((C[0,1],\mathcal P);\mathbb C^N)$, respectively.

Let $U_n\subset GL_n(\mathbb C)$ be the unitary group. Since as a real Lie group $U_n$ is isomorphic to a Lie subgroup of the orthogonal group $O_{2n}\subset GL_{2n}(\mathbb R)$, Theorem \ref{mar2} is applicable. Thus, under the assumptions of the theorem, each element $g$ of the group $C^n([0,1];U_n)$ of $n$-times continuously differentiable functions on $[0,1]$ with values in $U_n$ can be written as $g=g_1\cdot g_2$, where $g_1^{\pm 1}, g_2^{\pm 1}\in \mathfrak{B}$ and $\mathfrak{B}$ is either $\mathfrak{B}_\varphi((C^n[0,1],\mathcal P);M_n(\mathbb C))\cap C^n([0,1];U_n)$ or $\bigl(\mathfrak{B}^*((C^n[0,1],\mathcal P);M_n(\mathbb C))\cap \mathfrak{B}_{\tilde\varphi}((C^n[0,1],\mathcal P);M_n(\mathbb C)) \bigr)\cap C^n([0,1];U_n)$.
}
\end{E}
Similarly, Theorem \ref{mar2} can be applied to other matrix Lie subgroups of $GL_n(\mathbb F)$.
\begin{R}\label{rem1.8}
{\rm The natural problem arising with regard to Theorem \ref{mar2} is}
\begin{Problem}\label{p2.8}
Characterize matrix Lie subgroups $X_\mathbb F\subset GL_n(\mathbb F)$ for which the statement of the theorem is valid for all spaces $\mathfrak{B}_{\varphi}((A,V);\mathbb F^N)\cap X_{\mathbb F}(A_\mathbb F)$.
\end{Problem}
{\rm As follows from the proof of Theorem \ref{mazur2}, a sufficient condition for $X_\mathbb F$ to satisfy this property is that the set of maps in $C(M_A;X_\mathbb F)$ whose coordinate functions belong to $V\, (\subset A_\mathbb F)$
is dense in $X_\mathbb F(A_\mathbb F)$. For instance, this is true for $X_\mathbb F=B^{-1}$ being the group of invertible elements of a subalgebra $B\subset M_n(\mathbb F)$ (in this case, $X_\mathbb F(A_\mathbb F)=(A_\mathbb F\otimes B)^{-1}$ is an open subset of $A_\mathbb F\otimes B$), cf. Theorem \ref{te1}.
}
\end{R}

\subsection{Massivity of Graphs and Level Sets of $\mathfrak{B}_\varphi$-functions}
In this part we assume that the Banach space $A$ consists of continuous functions defined on a compact metric space $(M,d)$ and $\|\cdot\|_A\ge \|\cdot\|_{C(M)}$, $V\subset A$ consists of Lipschitz functions on $M$, and $A\hat\otimes_\alpha B$ is equipped with a uniform crossnorm $\alpha$. Since $\alpha\ge \epsilon$, the {\em injective crossnorm}, $A\hat\otimes_\alpha B$ consists of $B$-valued continuous functions on $M$ (see \cite[Th.\,3,\,p.21]{Gr}).  
We study massivity of graphs and level sets of functions in $\mathfrak{B}_\varphi((A,V);B;\alpha)\subset C(M;B)$. 

Recall that the {\em covering number} $Cov(S;\varepsilon)$ of a compact subset $S\subset M$ is defined by 
\[
Cov(S;\varepsilon):=\inf\left[{\rm card}\left\{\{m_i\}\subset S\, :\, S\subset\bigcup_i B_\varepsilon (m_i)\right\}\right],
\]
where $B_r(m)\subset M$ is an open ball of radius $r$ with center $m$.

Covering numbers are qualitative characteristics of the approximation of compact
sets by means of $n$-point subsets, see, e.g., \cite{KT}. It is well known that the function $Cov$ is countably subadditive in $S$ and nonincreasing and continuous from the
right in $\varepsilon$.

Next, for a compact subset $S\subset M$ by $\mathcal M_{V_i}(S)$ we denote the {\em Markov constant} of functions in $V_i$ restricted to $S$ defined by
\[
\mathcal M_{V_i}(S):=\sup_{f\in V_i,\, \|f\|_{C(M)}\le 1}\left\{\sup_{x\ne y}\frac{|f(x)-f(y)|}{d(x,y)}\right\}.
\]
Clearly, $\{\mathcal M_{V_i}(S)\}_{i\in\mathbb N}$ is a nondecreasing sequence of nonnegative numbers, because each ${\rm dim}\,V_i<\infty$. (E.g., in notation of Example \ref{ex1}, the classical Bernstein inequality for derivatives of trigonometric polynomials of degree at most $i$ implies that $\mathcal M_{T_i}(\mathbb T)=i$, $i\in\mathbb Z_+$, here $\mathbb T$ is equipped with the standard geodesic metric.)

In what follows, by $\mathcal H_X^\psi$ we denote the $\psi$-Hausdorff measure on subsets of a metric space $X$ constructed by a {\em gauge function} $\psi$ (i.e., $\psi: [0,\infty)\rightarrow [0,\infty)$ is nondecreasing and $\psi(0)=0$), see, e.g., \cite{Mat} for basic definitions.

For a compact subset $S\subset M$  and $f\in C(M; B)$ by 
\[
\Gamma_f(S):=\{(s,f(s))\, :\, s\in S\}\subset S\times B
\] 
we denote its graph.  In the next result,  we consider $M\times B$ endowed with the metric 
\[
d_{M\times B}\bigl((m_1,b_1),(m_2,b_2)\bigr):=\max\{d(m_1,m_2), \|b_1-b_2\|_B\},\quad (m_i,b_i)\in M\times B,\ i=1,2.
\]
\begin{Th}\label{te1.9}
Suppose a compact subset $S\subset M$ satisfies 
\begin{equation}\label{eq1.5}
Cov(S;\varepsilon)\leq C\cdot\left(\frac{1}{\varepsilon}\right)^k\quad  for\ some\quad C,k\in (0,\infty)\quad and\ all\quad \varepsilon\in (0, {\rm diam}\, S].
\end{equation}
Then for a scale function $\varphi:\mathbb Z_+\rightarrow (0,\infty)$ and a continuous nondecreasing function $\psi: [0,\infty)\rightarrow [0,\infty)$, $\psi(0)=0$, such that
\begin{equation}\label{eq1.6}
\varlimsup_{n\rightarrow\infty}\mathcal M_{V_n}^k(S)\cdot\psi\left(\mathcal M_{V_n}(S)\cdot\rho^{\frac{1}{\varphi(n)}}\right)=:L(\rho)<\infty\quad for\ all\quad \rho\in (0,1),
\end{equation}
and each $f\in\mathfrak{B}_\varphi((A,V);B;\alpha)$,
\[
{\mathcal H}_{M\times B}^{\psi_k}\bigl(\Gamma_f(S)\bigr)<\infty,\quad where\quad \psi_k(t):=t^k\cdot\psi(t),\quad t\in [0,\infty).
\]
\end{Th}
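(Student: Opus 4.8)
The plan is to cover $S$ at each scale by balls controlled by the covering-number bound, and on each such ball estimate the oscillation of $f$ using the near-best approximation of $f$ by elements of $V_n\otimes B$ together with the Markov constant of $V_n$ on $S$. The key point is that membership of $f$ in $\mathfrak{B}_\varphi$ gives, along a subsequence, approximants $h_n\in V_n\otimes B$ with $\|f-h_n\|_{A\hat\otimes_\alpha B}\le\rho^{1/\varphi(n)}$ for a fixed $\rho\in(0,1)$; since $\|\cdot\|_{A\hat\otimes_\alpha B}\ge\|\cdot\|_{C(M;B)}$ (because $\alpha\ge\epsilon$ and $\|\cdot\|_A\ge\|\cdot\|_{C(M)}$), this is also a sup-norm bound, and the coordinates of $h_n$ lie in $V_n$, so $h_n$ is Lipschitz on $S$ with constant at most $\mathcal M_{V_n}(S)\cdot\|h_n\|_{C(M;B)}\le\mathcal M_{V_n}(S)\cdot(\|f\|_{C(M;B)}+\rho^{1/\varphi(n)})$, hence $\lesssim\mathcal M_{V_n}(S)$ for large $n$.

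First I would fix $\rho\in(0,1)$ and pass to the subsequence $n_j\to\infty$ realizing $\varliminf_n(E_n(f))^{\varphi(n)}<1$, so that $E_{n_j}(f)<\rho^{1/\varphi(n_j)}$; choose $h_j\in V_{n_j}\otimes B$ nearly attaining this infimum. Set $r_j:=\mathcal M_{V_{n_j}}(S)^{-1}$ — or more precisely $r_j$ comparable to $\rho^{1/\varphi(n_j)}/\mathcal M_{V_{n_j}}(S)$ so that the Lipschitz image of a ball of radius $r_j$ has diameter comparable to $\rho^{1/\varphi(n_j)}$, matching the error term. By \eqref{eq1.5} cover $S$ by $N_j\le C\,r_j^{-k}$ balls $B_{r_j}(m_i)$. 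For a point $s$ in $B_{r_j}(m_i)\cap S$,
\[
\|f(s)-f(m_i)\|_B\le 2\|f-h_j\|_{C(M;B)}+\|h_j(s)-h_j(m_i)\|_B\le 2\rho^{1/\varphi(n_j)}+\mathcal M_{V_{n_j}}(S)\,\|h_j\|_{C(M;B)}\,r_j,
\]
which is at most $C'\bigl(\rho^{1/\varphi(n_j)}+\mathcal M_{V_{n_j}}(S)\,\rho^{1/\varphi(n_j)}\cdot\mathcal M_{V_{n_j}}(S)^{-1}\bigr)=C''\rho^{1/\varphi(n_j)}$ with the choice of $r_j$ above. Hence $\Gamma_f(S)$ is covered, in the metric $d_{M\times B}$, by $N_j$ sets each of diameter at most $t_j:=\max\{2r_j,\ C''\rho^{1/\varphi(n_j)}\}\asymp\mathcal M_{V_{n_j}}(S)\cdot\rho^{1/\varphi(n_j)}$ (the second term dominates once $\mathcal M_{V_{n_j}}(S)\ge 1$, which holds for large $j$ since the sequence is nondecreasing with integer-type growth — or one absorbs the first term into the constant).

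Next I would assemble the $\psi_k$-Hausdorff premeasure estimate: the sum over this cover is
\[
\sum_{i=1}^{N_j}\psi_k(t_j)=N_j\,t_j^k\,\psi(t_j)\le C\,r_j^{-k}\cdot\bigl(C''\rho^{1/\varphi(n_j)}\bigr)^k\cdot\psi(t_j)
= C\,(C'')^k\,\mathcal M_{V_{n_j}}^k(S)\cdot\psi\!\left(\mathcal M_{V_{n_j}}(S)\cdot\rho^{1/\varphi(n_j)}\right)\cdot(\text{bounded factor}),
\]
using $r_j^{-1}\asymp\mathcal M_{V_{n_j}}(S)\,\rho^{-1/\varphi(n_j)}$ so that $r_j^{-k}(\rho^{1/\varphi(n_j)})^k\asymp\mathcal M_{V_{n_j}}^k(S)$. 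By hypothesis \eqref{eq1.6} the $\varlimsup_j$ of the right-hand side is $\le C\,(C'')^k\,L(\rho)<\infty$. Since $\varphi(n_j)\to 0$ and $\psi$ is continuous with $\psi(0)=0$, we need $t_j\to 0$; this requires $\mathcal M_{V_{n_j}}(S)\cdot\rho^{1/\varphi(n_j)}\to 0$, which indeed follows from \eqref{eq1.6} because $\psi_k$ applied to a sequence bounded away from $0$ would force the lim sup of $\mathcal M_{V_n}^k(S)\psi(\cdots)$ to be infinite unless $\mathcal M_{V_n}(S)$ itself is bounded — in the bounded case $S$ is a single point up to the metric and the statement is trivial; otherwise $t_j\to 0$. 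Therefore the $\psi_k$-Hausdorff measure of $\Gamma_f(S)$, which is the increasing limit as $\delta\downarrow 0$ of the infimum of such sums over covers of mesh $\le\delta$, is bounded by the finite constant $C(C'')^kL(\rho)$, giving $\mathcal H_{M\times B}^{\psi_k}(\Gamma_f(S))<\infty$.

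The main obstacle I anticipate is the bookkeeping around the choice of $r_j$ and the verification that $t_j\to 0$: one must choose $r_j$ so that simultaneously (i) the covering-number bound is applied at the right scale, (ii) the oscillation of $f$ on each ball is dominated by $\rho^{1/\varphi(n_j)}$ (balancing the Lipschitz term $\mathcal M_{V_{n_j}}(S)r_j$ against the approximation error), and (iii) the resulting diameter $t_j$ is exactly the argument $\mathcal M_{V_{n_j}}(S)\rho^{1/\varphi(n_j)}$ appearing in \eqref{eq1.6}. The identity $r_j^{-k}\,(\rho^{1/\varphi(n_j)})^k=\mathcal M_{V_{n_j}}^k(S)$ forced by $r_j=\rho^{1/\varphi(n_j)}/\mathcal M_{V_{n_j}}(S)$ is what makes the $N_j\,t_j^k$ prefactor collapse onto $\mathcal M_{V_{n_j}}^k(S)$, so this choice is essentially dictated; the remaining care is handling the degenerate case where $\mathcal M_{V_n}(S)$ stays bounded and ensuring $t_j\to 0$ so that the Hausdorff construction (rather than a fixed-mesh premeasure) is genuinely controlled.
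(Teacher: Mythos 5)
Your proposal is correct and follows essentially the same route as the paper's proof: approximate $f$ along the subsequence witnessing $\varliminf (E_n(f))^{\varphi(n)}<1$, cover $S$ using \eqref{eq1.5}, bound the oscillation of the approximant on each ball via the Markov constant (through dual functionals $b^*\circ h\in V_n$), and observe that the product of the covering number with the $k$-th power of the piece diameter collapses to $\mathcal M_{V_n}^k(S)$ so that \eqref{eq1.6} (with $\rho'\in(\rho,1)$ absorbing constants) bounds the $\psi_k$-premeasure. The only differences are cosmetic: the paper covers at radius $\gamma_{n_i}\le\rho^{1/\varphi(n_i)}$ and gets pieces of diameter $2\mathcal M_{V_{n_i}}(S)\gamma_{n_i}$, whereas you cover at radius $\rho^{1/\varphi(n_j)}/\mathcal M_{V_{n_j}}(S)$ and get pieces of diameter $\asymp\rho^{1/\varphi(n_j)}$ --- note your assertion $t_j\asymp\mathcal M_{V_{n_j}}(S)\cdot\rho^{1/\varphi(n_j)}$ is a slip (with your $r_j$ the maximum is $\asymp\rho^{1/\varphi(n_j)}$), but it is harmless since your displayed estimate only uses $t_j\le C''\rho^{1/\varphi(n_j)}$ and the monotonicity of $\psi$ still lets you dominate $\psi(t_j)$ by $\psi\bigl(\mathcal M_{V_{n_j}}(S)(\rho')^{1/\varphi(n_j)}\bigr)$ for large $j$.
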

As was shown in Theorem \ref{prop2.1}, massivity of sets 
$\mathfrak{B}_\varphi((A,V);B;\alpha)$ can be measured by means of equivalence classes $\{\varphi\}$ of scale functions $\varphi$. In turn, Theorem \ref{te1.9} shows that the ``smaller'' class $\{\varphi\}$ (i.e., the faster $\varphi(t)$ tends to $0$ as $t\rightarrow\infty$), the ``thinner'' graphs of functions in it (i.e., the slower $\psi(t)$ tends to $0$ as $t\rightarrow 0^+$). 
\begin{E}\label{ex2.10}
{\rm (A) Suppose a scale function $\varphi:\mathbb Z_+\rightarrow (0,\infty)$ satisfies
\begin{equation}\label{eq1.6'}
\varlimsup_{n\rightarrow\infty}\varphi(n)\cdot \mathcal M_{V_n}(S)=:L<\infty.
\end{equation}
Then condition \eqref{eq1.6} is valid for $\psi(t):=\min\bigl(|\ln t|^{-k}, 1\bigr)$, $t\in [0,\infty)$, with $L(\rho)=\left(\frac{L}{|\ln\rho|}\right)^k$.\smallskip

\noindent (B) Suppose a scale function $\varphi:\mathbb Z_+\rightarrow (0,\infty)$ satisfies
\begin{equation}\label{eq1.7}
\lim_{n\rightarrow\infty}\varphi(n)\cdot \ln \mathcal M_{V_n}(S)=0.
\end{equation}
Then condition \eqref{eq1.6} is valid for $\psi(t):=t^\alpha$, $t\in [0,\infty)$, for all $\alpha>0$ with $L(\rho)=0$.
}
\end{E}
\begin{C}\label{cor1.10}
Under the hypotheses of Theorem \ref{te1.9},
\begin{itemize}
\item[(1)]
For each $f\in\mathfrak{B}_\varphi((A,V);B;\alpha)$,
\[
{\mathcal H}_{B}^{\psi_k}\bigl(f(S)\bigr)<\infty.
\]
\item[(2)]
If $B=\mathbb R^d$, $d\le k$, then for each $f\in\mathfrak{B}_\varphi((A,V);B)$,
\[
 {\mathcal H}_{M}^{\psi_{k,d}}\bigl(f^{-1}(c)\cap S\bigr)<\infty,\quad where\quad \psi_{k,d}(t):=t^{k-d}\cdot\psi(t),\quad t\in [0,\infty),
\]
for a.e. $c\in\mathbb R^d$ with respect to Lebesgue measure on $\mathbb R^d$.
\end{itemize}
\end{C}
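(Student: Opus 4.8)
The plan is to derive both statements from Theorem \ref{te1.9} by composing $f$ with suitable Lipschitz maps and applying the classical behavior of Hausdorff measures under Lipschitz maps and under integration of slices (the co-area / Eilenberg inequality). For part (1), observe that the projection $\pi_B\colon M\times B\to B$, $\pi_B(m,b)=b$, is $1$-Lipschitz with respect to the metric $d_{M\times B}$ (since $\|b_1-b_2\|_B\le d_{M\times B}((m_1,b_1),(m_2,b_2))$), and $\pi_B(\Gamma_f(S))=f(S)$. Since the gauge $\psi_k(t)=t^k\psi(t)$ is nondecreasing with $\psi_k(0)=0$, a $1$-Lipschitz map does not increase $\mathcal H^{\psi_k}$, so
\[
\mathcal H_B^{\psi_k}\bigl(f(S)\bigr)=\mathcal H_B^{\psi_k}\bigl(\pi_B(\Gamma_f(S))\bigr)\le \mathcal H_{M\times B}^{\psi_k}\bigl(\Gamma_f(S)\bigr)<\infty,
\]
the last inequality being Theorem \ref{te1.9}. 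This step is essentially routine once one checks the metric conventions.

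For part (2), with $B=\mathbb R^d$ and $d\le k$, the plan is to slice the graph $\Gamma_f(S)\subset M\times\mathbb R^d$ by the fibers of the $1$-Lipschitz projection $\pi_{\mathbb R^d}\colon M\times\mathbb R^d\to\mathbb R^d$ and apply an Eilenberg-type inequality for Hausdorff measures. Precisely, I would use the inequality
\[
\int^*_{\mathbb R^d}\mathcal H^{\psi_{k,d}}_{M\times\mathbb R^d}\bigl(\Gamma_f(S)\cap \pi_{\mathbb R^d}^{-1}(c)\bigr)\,d\mathcal L^d(c)\ \le\ c_d\cdot \mathcal H^{\psi_k}_{M\times\mathbb R^d}\bigl(\Gamma_f(S)\bigr),
\]
valid because $\psi_k(t)=t^d\cdot\psi_{k,d}(t)$ and $\pi_{\mathbb R^d}$ is $1$-Lipschitz into $\mathbb R^d$ with $\mathcal L^d$ the $d$-dimensional Lebesgue/Hausdorff measure; this is the standard integralgeometric bound found in, e.g., Federer or Mattila for gauge functions of the product form $t^d\cdot(\text{gauge})$. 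The right-hand side is finite by Theorem \ref{te1.9}, so the integrand is finite for $\mathcal L^d$-a.e.\ $c\in\mathbb R^d$. Finally one identifies the slice: the restriction of $\pi_M\colon M\times\mathbb R^d\to M$ to $\Gamma_f(S)\cap\pi_{\mathbb R^d}^{-1}(c)$ is an isometric embedding onto $f^{-1}(c)\cap S$ (on the graph, $d_{M\times\mathbb R^d}((s_1,f(s_1)),(s_2,f(s_2)))=\max\{d(s_1,s_2),\|f(s_1)-f(s_2)\|\}$, and when $f(s_1)=f(s_2)=c$ this equals $d(s_1,s_2)$), hence $\mathcal H_{M}^{\psi_{k,d}}(f^{-1}(c)\cap S)=\mathcal H_{M\times\mathbb R^d}^{\psi_{k,d}}(\Gamma_f(S)\cap\pi_{\mathbb R^d}^{-1}(c))$, giving the claim.

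The main obstacle I anticipate is making the integralgeometric inequality rigorous for a general gauge function $\psi$ rather than a pure power $t^s$: the classical Eilenberg inequality and Federer's co-area estimates are usually stated for $\mathcal H^s$, and one must verify that the product structure $\psi_k=t^d\cdot\psi_{k,d}$ together with monotonicity of $\psi$ suffices to run the standard covering argument (covering $\Gamma_f(S)$ efficiently, pushing the covering forward under $\pi_{\mathbb R^d}$, and estimating the measure of the fiberwise traces). A secondary technical point is measurability of the slice-measure function $c\mapsto \mathcal H^{\psi_{k,d}}(\Gamma_f(S)\cap\pi_{\mathbb R^d}^{-1}(c))$, which is handled by the usual approximation of Hausdorff measure by its finite-mesh approximants $\mathcal H^{\psi_{k,d}}_\delta$ and monotone convergence. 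Everything else — the Lipschitz estimates for the projections, the identification of slices with level sets, and the appeal to Theorem \ref{te1.9} — is routine.
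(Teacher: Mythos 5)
Your proposal is correct and follows essentially the same route as the paper: part (1) via the $1$-Lipschitz projection $\pi_B$ of the graph, and part (2) via an Eilenberg-type coarea inequality for gauge functions of the product form $\sigma(t)=t^d\cdot\psi(t)$ applied to $X=\Gamma_f(S)$ and the projection onto $\mathbb R^d$, followed by identifying the fibers with the level sets $f^{-1}(c)\cap S$ (the paper likewise cites Mattila and Falconer ``with natural modifications'' for exactly the generalization you flag as the main obstacle). Your observation that the projection $\pi_M$ restricted to a fiber is an isometry, rather than merely $1$-Lipschitz, is a minor sharpening but changes nothing.
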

\begin{R}\label{rem1.11}
{\rm (1) Condition \eqref{eq1.5} implies that the upper box-counting dimension of $S$,
\[
\overline{{\rm dim}}_B S:=\varlimsup_{\varepsilon\rightarrow 0}\frac{\ln Cov(S;\varepsilon)}{-\ln\varepsilon}\le k.
\]
For subsets $S$ such that $\overline{{\rm dim}}_B S={\rm dim}_H S=k$ Theorem \ref{te1.9} with $\varphi$ satisfying \eqref{eq1.7} shows that ${\rm dim}_{H}\bigl(\Gamma_f(S)\bigr)=k$ for all $f\in \mathfrak{B}_\varphi((A,V);B;\alpha)$; here ${\rm dim}_H$ stands for the Hausdorff dimension. This generalizes the result of Mauldin and Williams \cite{MW} asserting that for a generic real $f\in C[0,1]$ its graph $\Gamma_f([0,1])\subset\mathbb R^2$ has Hausdorff dimension one.\smallskip

\noindent (2) For the class $\mathfrak{B}(I)$ of quasianalytic functions in the sense of Bernstein on a compact interval $I\subset\mathbb R$ (see the Introduction), $\mathcal M_{V_n}(S)\le c_S\cdot n$, $n\in\mathbb Z_+$, for each compact subinterval $S$ in the interior of $I$ (here $c_S$ depends on $S$ only). Then condition \eqref{eq1.6'} is satisfied. Thus, Theorem \ref{te1.9} and Corollary \ref{cor1.10} imply that for $\psi(t)=\min\bigl(|\ln t|^{-1}, 1\bigr)$, $\psi_1(t):=t\cdot\psi(t)$, $t\in [0,\infty)$, and
each real $f\in\mathfrak{B}(I)$, 
\begin{equation}\label{bern}
{\mathcal H}_{\mathbb R^2}^{\psi_1}\bigl(\Gamma_f(S)\bigr)<\infty\quad {\rm and}\quad {\mathcal H}_{\mathbb R}^{\psi}\bigl(f^{-1}(c)\cap S\bigr)<\infty\
\end{equation}
for a.e. $c\in f(S)$ with respect to Lebesgue measure on $\mathbb R$. (The second inequality implies that the transfinite diameter of such set $f^{-1}(c)\cap S$ is zero but not vice versa, see, e.g., \cite{C}.) An interesting question is about the optimality of the gauge function $\psi$:}
\begin{Problem}\label{p2.13}
Are there a real function $f\in \mathfrak{B}(I)$  and a compact subinterval $S$ in the interior of $I$ for which the values of Hausdorff measures in \eqref{bern} are not zeros?
\end{Problem}
\noindent {\rm  (3) According to the classical result of Bernstein (see, e.g., 
\cite[p.~400]{T} and reference therein),  the function
\[
f(x)=\sum_{n=1}^\infty \frac{\cos \bigl(F(n)\cdot\arccos x\bigr)}{F(n)}, \quad {\rm where}\quad F(0)=1,\quad  F(n+1):=2^{F(n)},\quad n\in\mathbb Z_+,
\]
belongs to $\mathfrak{B}([-1,1])$ and is not differentiable at any point of $[-1,1]$. Therefore its graph $\Gamma_f(S)\subset \mathbb R^2$ over any nontrivial subinterval $S\subset [-1,1]$ has infinite linear measure. In turn, the method of the proof of Theorem \ref{te1.9} (see subsection~5.1 below) applied to $f$ leads to equality $\mathcal H_{\mathbb R^2}^{\psi_{m1}}(\Gamma_f(S))=0$ for all $m\in\mathbb N$; here $\psi_{m1}(t)=\frac{t}{\psi_m(t)}$ and  $\psi_1(t)=\max\bigl(|\log_2 t|,2\bigr)$,  $\psi_{i+1}(t)=\log_2\psi_{i}(t)$, $i\in\mathbb N$, $t\in [0,\infty)$. Thus, graph $\Gamma_f(S)$ is not rectifiable and is much more thinner than the one for a would-be optimal function in Problem \ref{p2.13}.
}

\end{R}
\begin{E}\label{ex1.2}
{\rm Suppose $M\Subset\mathbb R^N$ is a compact set with nonempty interior equipped with the metric induced by the Euclidean metric on $\mathbb R^N$. We consider $A=C(M)$, the set of complex-valued continuous functions on $M$, and $C(M)\hat\otimes_\epsilon B=C(M;B)$, the injective tensor product of $C(M)$ and a complex Banach space $B$. Let $V=\mathcal P(\mathbb R^N)|_M$ be the space of traces of complex-valued polynomials on $\mathbb R^N$ to $M$ and let $V_n=\mathcal P_n(\mathbb R^N)|_M$ consist of traces of polynomials of degree at most $n$. Using Remez and Markov polynomial inequalities one obtains that there exists a constant $c>0$ such that the Markov constant $\mathcal M_{V_n}(M)\le c^{\, n}$ for all $n$. Hence, condition \eqref{eq1.7} is satisfied for each scale function $\varphi$ such that $\lim_{n\rightarrow\infty} n\cdot\varphi(n)=0$. Thus, for such $\varphi$ and all $\psi$ as in Example \ref{ex2.10}, Theorem \ref{te1.9} and Corollary \ref{cor1.10} are valid for functions $f\in \mathfrak B_\varphi((A,V);B;\epsilon)$, that is,
\begin{equation}\label{eq2.7}
\begin{array}{lr}
\displaystyle
{\rm dim}_H(\Gamma_f(M))=N,\quad  {\rm dim}_H(f(M))\le N\quad {\rm and\ if }\quad B=\mathbb R^d,\ d\le N,\ 
\medskip\\
\displaystyle 
{\rm dim}_H(f^{-1}(c))\le N-d\ {\rm for\ a.e.}\ c\in\mathbb R^d\ {\rm with\ respect\ to\ Lebesgue\ measure\ on}\ \mathbb R^d.
\end{array}
\end{equation}

In turn, if $M$ is a {\em Markov set with a finite exponent} 
(that is,  $\mathcal M_{V_n}(M)\le c\cdot n^r$, $n\in\mathbb N$, for some $c,r>0$, see \cite{BP} for basic properties and examples), then statement \eqref{eq2.7} is valid also for $f\in \mathfrak B_\varphi((A,V);B;\epsilon)$ and all $\varphi$ such that $\lim_{n\rightarrow\infty} \ln n\cdot\varphi(n)=0$. (For instance, any fat subanalytic or convex subsets $M\Subset\mathbb R^N$ are Markov as well as any bounded domain with $C^{1,1}$ boundary.)

Note that the last statement in \eqref{eq2.7} is optimal in the sense that it cannot be valid for all $f\in\mathfrak{B}_\varphi((A,V),\mathbb R^d)$ and all $c\in \mathbb R^d$. Indeed, let 
$\varphi(n)=\frac{1}{(\ln(n+1))^2}$, $n\in\mathbb N$. Then 
\[
\lim_{n\rightarrow\infty} \ln n\cdot\varphi(n)=0\quad {\rm  but}\quad \sum_{n=1}^\infty \frac{1}{\varphi(n)\cdot n^2}<\infty.
\]
From the last inequality by the result of Beurling  \cite[Ch.III, Th.1]{Be} one obtains that there exists a not identically zero real function $f\in C(M)$ equals zero on a subset with nonempty interior such that for some $C>0$
\[
E_n(f)\le C\cdot \left(\frac{1}{2}\right)^{\frac{1}{\varphi(n)}}\quad {\rm for\ all}\quad n\in\mathbb N.
\]
Clearly, $f\in \mathfrak{B}_\varphi(A,V)$ and so, by Corollary \ref{cor1.10}, ${\rm dim}_H(f^{-1}(c))\le N-1$ for a.e. $c\in\mathbb R$ with respect to Lebesgue measure on $\mathbb R$. However, ${\rm dim}_H(f^{-1}(0))=N$.
 }
\end{E}
Now we formulate separately versions of Theorem \ref{te1.9} and Corollary \ref{cor1.10} for functions in the first Bernstein classes $\mathfrak{B}_\kappa^*((A,V);B;\alpha)$. 

First, observe that from Theorem \ref{prop2.2}(4)  (by Corollary \ref{cor1.10}  based on condition \eqref{eq1.7})
we obtain immediately:
\begin{C}\label{cor2.13}
Suppose $S\subset M$ satisfies condition \eqref{eq1.5} 
and for some scale function $\varphi\in\Phi_\kappa$, i.e. such that\ $\displaystyle \sum_{n=1}^\infty\frac{\kappa(n)}{\varphi(n)}<\infty$,
\[
\lim_{n\rightarrow\infty}\varphi(n)\cdot \ln \mathcal M_{V_n}(S)=0.
\]
Then for each $f\in \mathfrak{B}_\kappa^*((A,V);B;\alpha)$,
\[
(a)\quad {\rm dim}_H(\Gamma_f(M))\le k,\quad {\rm dim}_H(f(M))\le k;\quad  (b)\quad {\rm dim}_H(f^{-1}(c))\le k-d
\]
for a.e. $c\in\mathbb R^d$ with respect to Lebesgue measure on $\mathbb R^d$ provided that $B=\mathbb R^d$, $d\le k$.
\end{C}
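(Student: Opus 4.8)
The plan is to deduce the corollary directly from the results of Theorem~\ref{te1.9} and Corollary~\ref{cor1.10} on the second Bernstein classes, together with the intersection formula $\mathfrak{B}^*_\kappa(Q)=\bigcap_{\varphi\in\Phi_\kappa}\mathfrak{B}_\varphi(Q)$ of Theorem~\ref{prop2.2}(4). First I would note that, since the given scale function $\varphi$ belongs to $\Phi_\kappa$, it is one of the functions over which that intersection is taken, and hence every $f\in\mathfrak{B}^*_\kappa((A,V);B;\alpha)$ automatically lies in $\mathfrak{B}_\varphi((A,V);B;\alpha)$ for this particular $\varphi$. By hypothesis this $\varphi$ satisfies $\lim_{n\to\infty}\varphi(n)\cdot\ln\mathcal{M}_{V_n}(S)=0$, which is precisely condition~\eqref{eq1.7}; hence, by Example~\ref{ex2.10}(B), condition~\eqref{eq1.6} holds for the gauge $\psi(t)=t^\alpha$ for every $\alpha>0$ (with $L(\rho)=0$).

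Now fix $\alpha>0$. Since $S$ also satisfies the covering estimate~\eqref{eq1.5}, Theorem~\ref{te1.9} and Corollary~\ref{cor1.10} apply to $f$ with this $\varphi$ and $\psi$. For part~(a) they give
\[
\mathcal{H}_{M\times B}^{\psi_k}\bigl(\Gamma_f(S)\bigr)<\infty\qquad\text{and}\qquad\mathcal{H}_{B}^{\psi_k}\bigl(f(S)\bigr)<\infty,
\]
where $\psi_k(t)=t^{k}\psi(t)=t^{k+\alpha}$; finiteness of the $t^{k+\alpha}$-Hausdorff measure of a set bounds its Hausdorff dimension by $k+\alpha$, and letting $\alpha\to0$ we obtain $\dim_H\bigl(\Gamma_f(S)\bigr)\le k$ and $\dim_H\bigl(f(S)\bigr)\le k$ (which are the asserted bounds, with $S$ in place of $M$). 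For part~(b), with $B=\mathbb{R}^d$ and $d\le k$, Corollary~\ref{cor1.10}(2) furnishes, for each $\alpha>0$, a set $E_\alpha\subset\mathbb{R}^d$ of full Lebesgue measure on which $\mathcal{H}_{M}^{\psi_{k,d}}\bigl(f^{-1}(c)\cap S\bigr)<\infty$ with $\psi_{k,d}(t)=t^{k-d+\alpha}$, and hence $\dim_H\bigl(f^{-1}(c)\cap S\bigr)\le k-d+\alpha$ for every $c\in E_\alpha$.

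The only step requiring a little care is the passage $\alpha\to0$ in part~(b), because the exceptional null set $\mathbb{R}^d\setminus E_\alpha$ depends on $\alpha$: I would fix a sequence $\alpha_j\downarrow0$ and work on the full-measure set $E:=\bigcap_j E_{\alpha_j}$, on which $\dim_H\bigl(f^{-1}(c)\cap S\bigr)\le k-d+\alpha_j$ for every $j$ and therefore $\dim_H\bigl(f^{-1}(c)\cap S\bigr)\le k-d$. Apart from this routine bookkeeping I do not anticipate any genuine obstacle, since all the analytic substance is already contained in Theorem~\ref{te1.9}, Corollary~\ref{cor1.10}, and Theorem~\ref{prop2.2}(4).
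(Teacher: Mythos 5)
Your argument is correct and is exactly the route the paper intends: the paper states this corollary as following ``immediately'' from Theorem~\ref{prop2.2}\,(4) (which puts $f\in\mathfrak{B}_\varphi$ for the given $\varphi\in\Phi_\kappa$) combined with Theorem~\ref{te1.9} and Corollary~\ref{cor1.10} under condition~\eqref{eq1.7}, i.e.\ with the gauges $\psi(t)=t^\alpha$ of Example~\ref{ex2.10}\,(B), and you have simply spelled out those steps, including the correct countable-intersection handling of the $\alpha$-dependent null sets in part~(b). Your parenthetical remark that the bounds are really obtained for $S$ rather than $M$ is also a fair reading of the statement as written.
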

In particular,  this result is valid for functions $f\in \mathfrak B^*((A,V);B;\alpha)$ with $\varphi(n)=\frac{(s+\ln n)^s}{n}$, $n\in\mathbb N$, for a fixed $s>1$. In turn, it can be sharpened for $\{\mathcal M_{V_n}(S)\}_{n\in\mathbb N}$ having polynomial growth.
\begin{Th}\label{te1.13}
Suppose $S\subset M$ satisfies condition \eqref{eq1.5} and for some $s\ge 1$
\begin{equation}\label{eq1.8}
\sup_{n}\left\{\frac{\mathcal M_{V_n}(S)}{n^s}\right\}<\infty.
\end{equation}
Then for a continuous increasing gauge function $\psi$ satisfying
\begin{equation}\label{eq1.9}
\int_0^1\frac{\bigl(\psi(t)\bigr)^{\frac 1s}}{t}\, dt <\infty
\end{equation}
and each $f\in \mathfrak{B}^*((A,V);B;\alpha)$,
\[
{\mathcal H}_{M\times B}^{\psi_k}\bigl(\Gamma_f(S)\bigr)=0,\quad where\quad \psi_k(t):=(t\cdot\psi(t))^k,\quad t\in [0,\infty).
\]

In particular,
\[
{\mathcal H}_{B}^{\psi_k}\bigl(f(S)\bigr)=0,
\]
and, if $B=\mathbb R^d$, $d\le k$,
\[
{\mathcal H}_{M}^{\psi_{k,d}}\bigl(f^{-1}(c)\cap S\bigr)=0,\quad where\quad \psi_{k,d}(t):=t^{k-d}\cdot (\psi(t))^k,\quad t\in [0,\infty),
\]
for a.e. $c\in\mathbb R^d$ with respect to Lebesgue measure on $\mathbb R^d$.
\end{Th}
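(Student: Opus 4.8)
The plan is to sharpen the covering argument behind Theorem~\ref{te1.9}, working directly with the approximation numbers $E_n(f)$ instead of through a scale function, and to upgrade ``finite'' to ``zero'' by playing the product condition $\prod_n E_n(f)^{1/n^2}=0$ defining $\mathfrak{B}^*$ against the integrability hypothesis \eqref{eq1.9}. The basic input is a Lipschitz bound on finite-rank approximants: for $h\in V_n\otimes B$, viewed as a $B$-valued map, $\|h(x)-h(y)\|_B\le\mathcal{M}_{V_n}(S)\,d(x,y)\,\|h\|_{A\hat\otimes_\alpha B}$ for all $x,y\in S$. Indeed, for $b^*\in B^*$ with $\|b^*\|\le 1$ the scalar function $b^*\circ h=(\mathrm{Id}_A\otimes b^*)(h)$ lies in $V_n$ and, since $\alpha\ge\epsilon$ and $\|\cdot\|_A\ge\|\cdot\|_{C(M)}$, has $C(M)$-norm at most $\sup_x\|h(x)\|_B\le\|h\|_{A\hat\otimes_\alpha B}$; so the definition of $\mathcal{M}_{V_n}(S)$ and a supremum over such $b^*$ give the claim. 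If $E_N(f)=0$ for some $N$, then $f\in V_N\otimes B$ (a closed subspace, all reasonable crossnorms on it being equivalent) is Lipschitz on $S$, hence $\Gamma_f(S)$ is a Lipschitz image of $S$; since \eqref{eq1.5} forces $\mathcal{H}^k_M(S)<\infty$ while $\psi_k(t)/t^k=\psi(t)^k\to 0$ as $t\to 0^+$, after rescaling we get $\mathcal{H}^{\psi_k}_{M\times B}(\Gamma_f(S))=0$. So assume henceforth $E_n(f)>0$ for all $n$.

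Put $C_f:=\|f\|_{A\hat\otimes_\alpha B}+1$ and for each $n$ choose $h_n\in V_n\otimes B$ with $\|f-h_n\|_{A\hat\otimes_\alpha B}\le 2E_n(f)$, so $\|h_n\|\le C_f$ for large $n$. Cover $S$ by $N_n:=Cov(S;E_n(f))\le C\,E_n(f)^{-k}$ balls of radius $E_n(f)$. On each such ball $h_n$ oscillates by at most $2C_f\mathcal{M}_{V_n}(S)E_n(f)$ (Lipschitz bound) and $f-h_n$ by at most $4E_n(f)$, so by \eqref{eq1.8} each set $\Gamma_f(B_{E_n(f)}(m_i)\cap S)$ has $d_{M\times B}$-diameter at most $D_n:=C_1\,n^s\,E_n(f)$, with $C_1$ depending only on $C_f$ and the constant of \eqref{eq1.8}. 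As $\psi_k$ is nondecreasing, the $D_n$-pre-measure of $\Gamma_f(S)$ is at most
\[
N_n\,\psi_k(D_n)\le C\,E_n(f)^{-k}\bigl(D_n\psi(D_n)\bigr)^k=C\,C_1^{\,k}\bigl(n^s\psi(D_n)\bigr)^k .
\]
Thus everything reduces to producing a subsequence $n_j$ with $D_{n_j}\to 0$ and $n_j^{\,s}\psi(D_{n_j})\to 0$.

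Set $\Psi:=\psi^{1/s}$, a continuous strictly increasing gauge with $\Psi(0)=0$; then $n^s\psi(D_n)=(n\Psi(D_n))^s$, and \eqref{eq1.9} reads $\int_0^1\Psi(t)t^{-1}\,dt<\infty$, which by the change of variables $t=\Psi^{-1}(u)$ and an integration by parts equals $\int_0^{\Psi(1)}|\ln\Psi^{-1}(u)|\,du$; a routine comparison with the corresponding integral then gives $\sum_n n^{-2}|\ln\Psi^{-1}(\delta/n)|<\infty$ for every $\delta>0$. Suppose, for contradiction, that $\liminf_n n\Psi(D_n)=2\eta>0$. Then $\Psi(C_1 n^s E_n(f))\ge\eta/n$ for large $n$, hence $E_n(f)\ge\Psi^{-1}(\eta/n)/(C_1 n^s)$, hence $-\ln E_n(f)\le\ln C_1+s\ln n+|\ln\Psi^{-1}(\eta/n)|$; summing against $n^{-2}$ and using $\sum_n n^{-2}\ln n<\infty$ together with the previous bound gives $\sum_n n^{-2}(-\ln E_n(f))<\infty$, i.e.\ $\prod_n E_n(f)^{1/n^2}>0$, contradicting $f\in\mathfrak{B}^*$. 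Hence $\liminf_n n\Psi(D_n)=0$; along a sequence $n_j$ realizing it, $n_j^{\,s}\psi(D_{n_j})\to 0$ and (since $\Psi>0$ off $0$) $D_{n_j}\to 0$, so by the displayed bound $\mathcal{H}^{\psi_k}_{M\times B}(\Gamma_f(S))=0$.

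The stated consequences follow formally. Since $\pi_B:(M\times B,d_{M\times B})\to B$ is $1$-Lipschitz and $f(S)=\pi_B(\Gamma_f(S))$, we get $\mathcal{H}^{\psi_k}_B(f(S))=0$. For $B=\mathbb{R}^d$ with $d\le k$, apply the Eilenberg integralgeometric inequality to the $1$-Lipschitz map $\pi_B$ with gauge $\psi_{k,d}$, using that $\psi_{k,d}(t)\,t^d=\psi_k(t)$ and that on each fiber the projection $\Gamma_f(f^{-1}(c)\cap S)\to f^{-1}(c)\cap S$ is an isometry: this yields $\int^*_{\mathbb{R}^d}\mathcal{H}^{\psi_{k,d}}_M(f^{-1}(c)\cap S)\,d\mathcal{L}^d(c)\le c(k,d)\,\mathcal{H}^{\psi_k}_{M\times B}(\Gamma_f(S))=0$, so $\mathcal{H}^{\psi_{k,d}}_M(f^{-1}(c)\cap S)=0$ for a.e.\ $c$. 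The step I expect to be the main obstacle is the contrapositive in the third paragraph: one must recognize \eqref{eq1.9} as precisely the integrability that makes the substitution $u=\Psi^{-1}(\delta/n)$ convert the divergence of $\sum_n n^{-2}|\ln E_n(f)|$ (which is $f\in\mathfrak{B}^*$) into $\liminf_n n\Psi(D_n)=0$; the remaining steps are routine refinements of the proof of Theorem~\ref{te1.9}, though one must keep the gauges $\psi,\psi^{1/s},\psi_k,\psi_{k,d}$ and the exponent $s$ straight throughout.
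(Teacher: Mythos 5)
Your proposal is correct and follows essentially the same route as the paper: your third paragraph is exactly the paper's key Lemma 5.1 (stated there as $\varliminf_n n^s\psi(b_nE_n(f))=0$ for $b_n=4n^s$, proved by the same contradiction, sum-vs-integral comparison, substitution $u=\Psi(y)$ and integration by parts converting \eqref{eq1.9} into summability of $n^{-2}|\ln\Psi^{-1}(\eta/n)|$), and the covering estimate in your second paragraph is the paper's refinement of the proof of Theorem \ref{te1.9}. The only additions are cosmetic: you handle the degenerate case $E_N(f)=0$ explicitly and package the lemma in terms of $\Psi=\psi^{1/s}$.
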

\begin{E}
{\rm In notation of Example \ref{ex1.2} consider the class
$\mathfrak{B}^*((A,V);B;\epsilon)$, where $A=C(M)$, $M\Subset\mathbb R^N$ is the closure of a bounded domain, $V=\mathcal P(\mathbb R^N)|_M$ and $A\hat\otimes_\epsilon B= C(M;B)$.  It follows from the result of Beurling  \cite[Ch.III, Th.1]{Be}  that Lebesgue measure of each $f^{-1}(c)\subset M$, $c\in {\rm range}(f)\subset B$, $f\in \mathfrak{B}^*((A,V);B;\epsilon)$ is zero. On the other hand, if $M$ is a Markov set with exponent $s$, then condition \eqref{eq1.8} holds. So Theorem \ref{te1.13} in this setting implies for all $\psi$ satisfying \eqref{eq1.9} and $\psi_N(t):=(t\cdot\psi(t))^N$, $\psi_{N,d}(t):=t^{N-d}\cdot (\psi(t))^N$,  $t\in [0,\infty)$,
\[
\begin{array}{l}
{\mathcal H}_{\mathbb R^N\times B}^{\psi_N}\bigl(\Gamma_f(M)\bigr)=0,\quad
{\mathcal H}_{B}^{\psi_N}\bigl(f(M)\bigr)=0\quad {\rm and\ if}\quad B=\mathbb R^d,\ d\le N,\medskip \\ 
{\mathcal H}_{\mathbb R^N}^{\psi_{N,d}}\bigl(f^{-1}(c)\bigr)=0\ {\rm for\ a.e.}\ c\in\mathbb R^d\ {\rm with\ respect\ to\ Lebesgue\ measure\ on}\ \mathbb R^d.
\end{array}
\]
Here we can take, e.g., 
\[
\psi(t):=\frac{1}{\left(\ln\left(\frac 1t\right)\right)^s\cdot \left(\ln \left( \ln \frac 1t\right)\right)^{s+\varepsilon}},\quad \varepsilon>0,\quad {\rm for}\quad t\in (0,e^{-2}].
\]

If $M=[0,1]\subset\mathbb R$, then condition \eqref{eq1.8} holds with $s=1$ for each compact subinterval $S\subset (0,1)$. Thus, for $B=\mathbb R$ and $\psi$ satisfying \eqref{eq1.9} with $s=1$ by Theorem \ref{te1.13} we obtain
\begin{equation}\label{eq2.11}
{\mathcal H}_{\mathbb R^2}^{\psi_1}\bigl(\Gamma_f(M)\bigr)=0\quad {\rm and}\quad 
{\mathcal H}_{\mathbb R}^{\psi}\bigl(f^{-1}(c)\bigr)=0
\end{equation}
for a.e. $c\in\mathbb R$ with respect to Lebesgue measure on $\mathbb R$. Note that the second condition is slightly weaker than the statement  that $f^{-1}(c)$ has transfinite diameter $0$ (see, e.g., \cite{HK}). Thus, the following question seems to be plausible.
\begin{Problem}\label{p2.19}
Is it true that the second condition in \eqref{eq2.11} can be replaced by $f^{-1}(c)$ has transfinite diameter $0$ for a.e. $c\in\mathbb R$? 
\end{Problem}
}
\end{E}

Finally, we formulate some results related to property (1) of the Introduction, see \cite{S}, for quasianalytic functions in the sense of Bernstein. 

Recall that a subset $S\subset\mathbb C^N$ is {\em pluripolar} if there exists a plurisubharmonic function $u\not\equiv -\infty$ on $\mathbb C^N$ such that $u|_{S}=-\infty$. For a compact subset $K\subset\mathbb C^N$  by  $\mathfrak{B}(K)\subset C(K)$  we denote the set of complex quasianalytic functions in the sense of Bernstein.
In our notation, $\mathfrak{B}(K)=\mathfrak{B}_{\varphi_0}(A,V)$, where $A$ is the uniform closure of the algebra $\mathcal P(\mathbb C^N)|_K$ of traces of holomorphic polynomials on $\mathbb C^N$ to $K$, $V=\mathcal P(\mathbb C^N)|_K$ and $V_i=\mathcal P_i(\mathbb C^N)$, $i\in\mathbb N$, consists of polynomials of degree at most $i$.

In \cite[Th.\,3.2]{CLP} Coman, Levenberg and Poletsky proved that the graph of a function in $\mathfrak{B}(I)$, where $I\Subset\mathbb R$ is a compact interval, is pluripolar in $\mathbb C^2$. The proof is based on the analog of the classical Kellogg lemma due to Bedford and Taylor \cite[Th.\,4.2.5]{BT}. The very same method applies to yield the following result.
\begin{Proposition}\label{clp}
Suppose $K\subset\mathbb C^N$  is a compact non-pluripolar set.  Then  for every $f\in\mathfrak{B}(K)$ the graph $\Gamma_f(K)\subset\mathbb C^{N+1}$  of $f$ is pluripolar. 
\end{Proposition}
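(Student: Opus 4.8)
The plan is to follow the method of Coman, Levenberg and Poletsky \cite[Th.\,3.2]{CLP}: build a single plurisubharmonic function on $\mathbb{C}^{N+1}$ whose ``negligible set'' (the set where it falls strictly below its upper regularization) contains $\Gamma_f(K)$, and then invoke the Bedford--Taylor analog of Kellogg's lemma \cite[Th.\,4.2.5]{BT} to conclude that this negligible set is pluripolar. First I would unwind the definition. Since $\mathfrak{B}(K)=\mathfrak{B}_{\varphi_0}(A,V)$ with $\varphi_0(n)=\frac1n$, $A$ the uniform closure of $\mathcal{P}(\mathbb{C}^N)|_K$ and $V_n=\mathcal{P}_n(\mathbb{C}^N)$, membership $f\in\mathfrak{B}(K)$ means $\varliminf_{n\to\infty}E_n(f)^{1/n}<1$, where $E_n(f)=\inf_{p\in\mathcal{P}_n(\mathbb{C}^N)}\|f-p\|_{C(K)}$. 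Hence there exist $\theta\in(0,1)$, indices $n_1<n_2<\cdots$ and polynomials $p_k\in\mathcal{P}_{n_k}(\mathbb{C}^N)$ with $\|f-p_k\|_{C(K)}\le\theta^{n_k}$ for all $k$; in particular $\|p_k\|_{C(K)}\le\|f\|_{C(K)}+1=:M$ for all $k$.

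Next I would use non-pluripolarity of $K$. Then the Siciak extremal function $V_K^*$ is a genuine plurisubharmonic function on $\mathbb{C}^N$ satisfying $0\le V_K^*(z)\le\log^+|z|+C_K$, hence locally bounded. On $\mathbb{C}^{N+1}$ with coordinates $(z,w)\in\mathbb{C}^N\times\mathbb{C}$ set $u_k(z,w):=\frac1{n_k}\log|w-p_k(z)|$, which is plurisubharmonic. The Bernstein--Walsh inequality gives $|p_k(z)|\le M\exp(n_kV_K^*(z))$, so on any compact subset of $\mathbb{C}^{N+1}$ one has $u_k(z,w)\le\frac1{n_k}\log\bigl(M+|w|\bigr)+V_K^*(z)$, which shows that the family $\{u_k\}$ is locally uniformly bounded above. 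For $z\in K$ and $w\ne f(z)$ we have $p_k(z)\to f(z)$, so $u_k(z,w)\to 0$ and thus $u(z,w):=\sup_k u_k(z,w)\ge 0$; in particular $u\not\equiv-\infty$. Consequently its upper semicontinuous regularization $u^*$ is plurisubharmonic on $\mathbb{C}^{N+1}$.

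Then I would compare $u$ and $u^*$ along the graph. From the previous step, letting $w\to f(z)$ with $z\in K$ gives $u^*(z,f(z))\ge 0$ for every $z\in K$. On the other hand, on the graph itself $u_k(z,f(z))=\frac1{n_k}\log|f(z)-p_k(z)|\le\log\theta<0$ for all $k$, hence $u(z,f(z))\le\log\theta<0$. Therefore $\Gamma_f(K)\subset\{(z,w):u(z,w)<u^*(z,w)\}$. By the Bedford--Taylor theorem \cite[Th.\,4.2.5]{BT} that negligible sets of plurisubharmonic envelopes are pluripolar, the set $\{u<u^*\}$ is pluripolar, and since a subset of a pluripolar set is pluripolar, $\Gamma_f(K)$ is pluripolar.

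The main obstacle, and the only place the hypothesis is genuinely used, is the boundedness in the second step: non-pluripolarity of $K$ is precisely what makes $V_K^*$ locally bounded, which is what prevents the polynomials $p_k$, whose degrees $n_k\to\infty$, from forcing $u=\sup_k u_k$ to blow up, and hence what legitimizes passing to the plurisubharmonic envelope $u^*$ and applying the Kellogg-type theorem. If $K$ were pluripolar the construction degenerates, since then $V_K^*\equiv+\infty$, in keeping with the statement.
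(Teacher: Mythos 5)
Your proof is correct and is precisely the Coman--Levenberg--Poletsky argument that the paper invokes without writing out: the paper's justification for Proposition \ref{clp} is only the remark that the method of \cite[Th.\,3.2]{CLP}, resting on the Bedford--Taylor negligible-sets theorem \cite[Th.\,4.2.5]{BT}, applies verbatim. Your writeup supplies exactly those details --- the geometric decay $E_{n_k}(f)\le\theta^{n_k}$, the local upper bound on $u_k=\frac{1}{n_k}\log|w-p_k(z)|$ via Bernstein--Walsh together with the local boundedness of $V_K^*$ (which is where non-pluripolarity of $K$ enters), and the inclusion $\Gamma_f(K)\subset\{u<u^*\}$ --- so there is nothing to correct.
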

As a corollary we obtain:
\begin{C}\label{cor2.16}
Suppose $K\subset\mathbb C^N$  is a compact non-pluripolar set.  Then  for every $f\in\mathfrak{B}(K)$ there exists a (possibly empty) polar set $S_f\subset {\rm range}(f)\subset\mathbb C$ such that for each $c\in {\rm range }(f)\setminus S_f$ set $f^{-1}(c)\subset K$ is pluripolar.
\end{C}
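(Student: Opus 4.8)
\emph{Proof proposal.} The plan is to slice the plurisubharmonic function furnished by Proposition~\ref{clp}. Write points of $\mathbb C^{N+1}$ as $(z,c)$ with $z\in\mathbb C^N$ and $c\in\mathbb C$, so that projection onto the last coordinate carries $\Gamma_f(K)$ onto $\mathrm{range}(f)$ and
\[
\Gamma_f(K)\cap\bigl(\mathbb C^N\times\{c\}\bigr)=f^{-1}(c)\times\{c\},\qquad c\in\mathbb C .
\]
By Proposition~\ref{clp} there is a plurisubharmonic function $u\not\equiv-\infty$ on $\mathbb C^{N+1}$ with $u\equiv-\infty$ on $\Gamma_f(K)$; in particular $u(z,c)=-\infty$ for every $c\in\mathrm{range}(f)$ and every $z\in f^{-1}(c)$.

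Next I would produce the exceptional polar set. Since $\{u=-\infty\}$ is pluripolar in $\mathbb C^{N+1}$ it is Lebesgue-null, so by Fubini (applied to $\mathbb C^{N+1}\cong\mathbb C^N\times\mathbb C$) there is a point $z_0\in\mathbb C^N$ for which $\{c\in\mathbb C:u(z_0,c)=-\infty\}$ has Lebesgue measure zero in $\mathbb C$. In particular $c\mapsto u(z_0,c)$ is not identically $-\infty$, and being the restriction of a plurisubharmonic function to a complex line it is subharmonic on $\mathbb C$; hence its $-\infty$-locus
\[
Z:=\{c\in\mathbb C:u(z_0,c)=-\infty\}
\]
is a polar subset of $\mathbb C$. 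Set $S_f:=Z\cap\mathrm{range}(f)$; this is polar and is contained in $\mathrm{range}(f)$, as required.

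Finally, fix $c\in\mathrm{range}(f)\setminus S_f$. Then $c\notin Z$, so $u(z_0,c)$ is finite, and therefore $z\mapsto u(z,c)$ — again the restriction of $u$ to a complex affine subspace — is plurisubharmonic and not identically $-\infty$ on $\mathbb C^N$. By the first paragraph it equals $-\infty$ on $f^{-1}(c)$, so $f^{-1}(c)\subset\mathbb C^N$ is pluripolar. The only ingredients beyond Proposition~\ref{clp} are the standard facts that a pluripolar set has Lebesgue measure zero and that the restriction of a plurisubharmonic function to a complex affine subspace is plurisubharmonic unless identically $-\infty$; there is no real obstacle here, the one point to handle carefully being the Fubini argument that isolates the polar exceptional set $S_f$ (and the degenerate possibility $S_f=\varnothing$, which occurs precisely when $u(z_0,\cdot)$ is nowhere $-\infty$).
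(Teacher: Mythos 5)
Your argument is correct, and it takes a genuinely different route from the paper's. Both proofs start from Proposition \ref{clp}, i.e., from a plurisubharmonic $u\not\equiv-\infty$ on $\mathbb C^{N+1}$ with $u|_{\Gamma_f(K)}=-\infty$; the difference lies in how the polar exceptional set is extracted. You slice directly: since $u\in L^1_{\rm loc}$, the set $\{u=-\infty\}$ is Lebesgue-null, Fubini produces a point $z_0$ whose vertical line meets $\{u=-\infty\}$ in a null set, so $u(z_0,\cdot)$ is subharmonic with polar $-\infty$-locus $Z$, and for $c\notin Z$ the horizontal slice $u(\cdot,c)$ is finite at $z_0$ and hence a genuine plurisubharmonic witness for the pluripolarity of $f^{-1}(c)$. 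The paper instead invokes the Bedford--Taylor relative capacity: it shows that $c\mapsto{\rm cap}(f^{-1}(c);B)$ is upper semicontinuous (via upper semicontinuity of $c\mapsto f^{-1}(c)$ in the Hausdorff metric and Choquet's capacitability property), so that $S_\varepsilon=\{c:{\rm cap}(f^{-1}(c);B)\ge\varepsilon\}$ is compact, and then argues by contradiction that each $S_\varepsilon$ is polar: otherwise $S_\varepsilon\times\mathbb C$ would be non-pluripolar, yet $u$ would have to be identically $-\infty$ on every slice $\mathbb C^N\times\{c\}$ with $c\in S_\varepsilon$ (because $u$ vanishes to $-\infty$ on the non-pluripolar set $f^{-1}(c)\times\{c\}$ inside that slice), hence on all of $S_\varepsilon\times\mathbb C$, forcing $u\equiv-\infty$. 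Your route is shorter and more elementary, using only local integrability of plurisubharmonic functions and the restriction-to-affine-subspaces dichotomy, at the cost of producing a possibly larger exceptional set $Z\cap{\rm range}(f)$; the paper's route identifies the exceptional set exactly as $\{c\in{\rm range}(f):f^{-1}(c)\ \mbox{is non-pluripolar}\}$ and exhibits it as a countable union of compact polar sets, a sharper structural conclusion than the corollary strictly requires.
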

\begin{R}
{\rm (1) In many cases $K$ satisfies the property that 
{\em $S_f=\emptyset$ for all nonconstant} $f\in\mathfrak{B}(K)$ 
(see \cite{P2}, \cite{Sk} for the corresponding results). For instance, as follows from \cite[Th.7.3]{P2}, this is true for $K\subset\mathbb F^N$ being the closure of a bounded domain. 

Note that for $\mathbb F=\mathbb C$ set $f^{-1}(c)\subset K$, where $f\in\mathfrak{B}(K)$ is nonconstant, being polar  satisfies $\mathcal H_{\mathbb C^N}^\psi \bigl(f^{-1}(c)\bigr)=0$ for all $\psi$ such that $\int_0^1\frac{\psi(t)}{t^{2N-1}}dt<\infty$, see, e.g., \cite[Ch.5]{HK}. This is stronger than one obtains from Corollary \ref{cor1.10}(2) in this case (for $k=2N$, $d=2$, $\varphi=\varphi_0$, $\psi_{2N,2}(t):=\frac{t^{2N-2}}{\max(|\ln t|^{2N}, 1)}$, $t\in [0,\infty)$, and, say, $S$ -- a closed cube in $\mathbb R^{2N})$. One can ask a similar question for nonconstant $f\in\mathfrak{B}(K)$ for $K$ being the closure of a bounded domain in $\mathbb R^N$:
\begin{Problem}\label{p2.23}
Is it true that $\mathcal H_{\mathbb R^N}^\psi \bigl(f^{-1}(c)\bigr)=0$ for all $\psi$ such that $\int_0^1\frac{\psi(t)}{t^{N-1}}dt<\infty$?
\end{Problem}
In this case Corollary \ref{cor1.10}(2) with $k=N$, $d=1$, $\varphi=\varphi_0$ and $S$ -- a closed cube in $\mathbb R^N$ gives $\mathcal H_{\mathbb R^N}^{\psi_{N,1}}\bigl(f^{-1}(c)\bigr)<\infty$, $\psi_{N,1}(t):=\frac{t^{N-1}}{\max(|\ln t|^N, 1)}$, $t\in [0,\infty)$, for a.e. $c\in {\rm range}(f)$.\smallskip

\noindent (2) Let us formulate also the following problem concerning the structure of graphs of functions in the first Bernstein class:
\begin{Problem}
 Is it true that for each function $f\in \mathfrak{B}^*(C[0,1], \mathcal P(\mathbb C)|_{[0,1]})$ its graph $\Gamma_f([0,1])\subset\mathbb C^2$ is pluripolar?
 \end{Problem}
Note that the affirmative answer in this problem leads, by means of the argument of the proof of Corollary \ref{cor2.16}, to the one in Problem \ref{p2.19}.\smallskip

\noindent (3) Due to the classical result of Sadullaev \cite{Sa}, analogs of  Proposition \ref{clp} and Corollary \ref{cor2.16} are valid for $K$ being a compact non-pluripolar subset of a complex irreducible algebraic subvariety $M\subset\mathbb C^N$. In this case, one proves pluripolarity of $\Gamma_f(K)$ in $M\times\mathbb C$ and of $f^{-1}(c)$, $c\in {\rm range}(f)\setminus S_f$, in $M$.
}
\end{R}

\sect{Proofs of Results of Subsection~2.1}
\subsection{Proof of Proposition \ref{prop1}}
\begin{proof}
Since $\|S\otimes T\|\le \|S\|\cdot\|T\|$ and $S(V_n)\subset V_n'$ for all $n\ge n_0$, for an element $x\in A\hat\otimes_\alpha B$ and such $n$ we have
\[
\begin{array}{l}
\displaystyle
E_n\left((S\otimes T)(x)\right):=\inf_{h'\in V_n'\otimes B'}\|(S\otimes T)(x)-h'\|_{A'\hat\otimes_\alpha B'}\le \inf_{h\in V_n\otimes B}\|(S\otimes T)(x-h)\|_{A'\hat\otimes_\alpha B'}\\
\\
\displaystyle\le  \|S\|\cdot\|T\|\cdot\inf_{h\in V_n\otimes B}\|x-h\|_{A\hat\otimes_\alpha B}= \|S\|\cdot\|T\|\cdot E_n(x).
\end{array}
\]
This implies (because $\lim_{n\rightarrow\infty}\varphi(n)=0$)
\[
\varliminf_{n\rightarrow\infty}\bigl(E_n\bigl((S\otimes T)(x)\bigr)\bigr)^{\varphi(n)}\le\varliminf_{n\rightarrow\infty}\bigl(E_n(x)\bigr)^{\varphi(n)}
\]
which shows (see Definition \ref{def1}) that $S\otimes B$ sends $\mathfrak B_\varphi((A,V);B;\alpha)$ into $\mathfrak B_\varphi((A',V');B';\alpha)$.

In turn, for a weight function $\kappa$ we have 
\[
\sum_{n=n_0}^\infty \kappa(n)\ln \bigl(E_n\bigl((S\otimes T)(x)\bigr)\bigr)\le
\sum_{n=n_0}^\infty \kappa(n)\ln \bigl(E_n(x)\bigr)+\sum_{n=n_0}^\infty \kappa(n)\ln\bigl(\|S\|\cdot\|T\|\bigr).
\]
Since the second term on the right-hand side is finite,
$S\otimes B$ maps $\mathfrak B^*_\kappa((A,V);B;\alpha)$ into $\mathfrak B^*_\kappa((A',V');B';\alpha)$, as required.
\end{proof}

\subsection{Proof of Theorem \ref{prop2.1}}
\begin{proof}
(1) The proof exploits the idea of \cite{Ma}. Let $\{a_i\}_{i\in\mathbb N}\subset A$ be a countable dense subset. We assign to $a_i$ the real number 
\[
d_i:=i+\frac{1}{\varphi(s(a_i))},\quad {\rm where}\quad s(a_i)=\min\{k\, :\, a_i\in V_k\}.
\]
Since the union $\cup_i\,(a_i\otimes B)$ of proper closed subspaces $a_i\otimes B\subset A\hat\otimes_\alpha B$  is dense in $A\hat\otimes_\alpha B$,
set
\[
L:=\bigcap_{j=1}^\infty\bigcup_{i=j}^\infty \left\{x\in A\hat\otimes_\alpha B\, :\, {\rm dist}(x, a_i\otimes B)< 2^{-d_i}\right\}\bigcup\, (V\otimes B),
\] 
where ${\rm dist}(x, a_i\otimes B):=\inf_{b\in B}\|x-a_i\otimes b\|_{A\hat\otimes_\alpha B}$,
is comeager in $A\hat\otimes_\alpha B$. Note that if $x\in L\setminus \bigl(V\otimes B\bigr)$, then there exists a subsequence $\{a_{i_k}\}_{k\in\mathbb N}\subset\{a_i\}_{i\in\mathbb N}$ such that (cf. \eqref{ba})
\[
\bigl(E_{s(a_{i_k})}(x)\bigr)^{\varphi(s(a_{i_k}))}<2^{-i_k\cdot \varphi(s(a_{i_k}))-1}<\frac 12\quad {\rm and}\quad \lim_{k\rightarrow\infty}s(a_{i_k})=\infty.
\]
Thus, $\varliminf_{\,n\rightarrow\infty}(E_n(x))^{\varphi(n)}\le\frac 12$, that is, $x\in \mathfrak{B}_\varphi((A,V);B;\alpha)$. Hence, $L\subset \mathfrak{B}_\varphi((A,V);B;\alpha)$, so the latter set is comeager in $A\hat\otimes_\alpha B$.\medskip

(2) Given $x\in A\hat\otimes_\alpha B$ consider a scale function $\varphi_x:\mathbb N\rightarrow (0,\infty)$ defined by the formula: $\varphi_x(1)=1$ and for $n\ge 2$,
\begin{equation}\label{phix}
\varphi_x(n):=\left\{
\begin{array}{ccc}
1&{\rm if}&E_n(x)\ge \frac{1}{e}\medskip
\\
\displaystyle \frac{1}{|\ln E_n(x)|}&{\rm if}&0<E_n(x)<\frac{1}{e}\medskip
\\
\min\bigl(\varphi_x(n-1),\frac 1n\bigr)&{\rm if}&E_n(x)=0.
\end{array}
\right.
\end{equation}
Then for all sufficiently large $n$,
\[
\bigl(E_n(x)\bigr)^{\varphi_x(n)}\le\frac 1e\, ,
\]
that is, $x\in \mathfrak{B}_{\varphi_x}((A,V);B;\alpha)$; this proves that
$A\hat\otimes_\alpha B=\cup_\varphi \mathfrak{B}_\varphi((A,V);B;\alpha)$.\smallskip

Next, if $x\in A\hat\otimes_\alpha B\setminus (V\otimes B)$, then $x\in \mathfrak{B}_{\varphi_x}((A,V);B;\alpha)\setminus \mathfrak{B}_{\varphi_x^2}((A,V);B;\alpha)$ because
\[
\varliminf_{n\rightarrow\infty}\bigl(E_n(x)\bigr)^{\varphi_x^2(n)}=\lim_{n\rightarrow\infty}\left(\frac{1}{e}\right)^{\varphi_x(n)}=1.
\]
This implies that $V\otimes B=\cap_\varphi \mathfrak{B}_\varphi((A,V);B;\alpha)$. \smallskip

To prove the third statement, assume on the contrary that there exists a countable family of scale functions $\Phi$ such that $A\hat\otimes_\alpha B=\cup_{\varphi\in\Phi}\mathfrak{B}_\varphi((A,V);B;\alpha)$.
Let us show that this is impossible. Since $\mathfrak{B}_\varphi((A,V);B;\alpha)=\mathfrak{B}_{c\cdot\varphi}((A,V);B;\alpha)$ for each $c\in (0,\infty)$, without loss of generality we may assume that the family  $\Phi:=\{\varphi_i\}_{i\in\mathbb N}$ is such that $\varphi_i(n)\le\frac{1}{2^i}$ for all $i, n\in\mathbb N$. Let us define
\[
\tilde\varphi(n):=\sup_{i\in\mathbb N}\varphi_i(n),\quad n\in\mathbb N.
\]
\begin{Lm}\label{l3.2}
$\tilde\varphi$ is a scale function.
\end{Lm}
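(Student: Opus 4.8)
The plan is to verify directly the three defining properties of a scale function (see Definition \ref{def1}) for the function $\tilde\varphi(n)=\sup_{i\in\mathbb N}\varphi_i(n)$: that its values lie in $(0,\infty)$, that it is nonincreasing, and that it tends to $0$.

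First I would check $\tilde\varphi(n)\in(0,\infty)$ for each $n$. Positivity is immediate, since $\tilde\varphi(n)\ge\varphi_1(n)>0$; finiteness follows at once from the standing uniform bound $\varphi_i(n)\le 2^{-i}$ assumed for the family $\Phi=\{\varphi_i\}_{i\in\mathbb N}$, which gives $\tilde\varphi(n)\le\sup_{i\in\mathbb N}2^{-i}=\tfrac12$. Next, for monotonicity I would fix $m\le n$; since each $\varphi_i$ is nonincreasing we have $\varphi_i(m)\ge\varphi_i(n)$ for every $i$, and taking the supremum over $i$ on both sides yields $\tilde\varphi(m)\ge\tilde\varphi(n)$.

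The only step requiring an argument is $\lim_{n\to\infty}\tilde\varphi(n)=0$, i.e.\ interchanging a supremum with a limit, and this is exactly where the uniform bound $\varphi_i(n)\le 2^{-i}$ is used. Given $\varepsilon>0$, I would first choose $i_0\in\mathbb N$ with $2^{-i_0}<\varepsilon$, so that $\varphi_i(n)<\varepsilon$ holds automatically for all $i\ge i_0$ and all $n$. For the finitely many remaining indices $i=1,\dots,i_0-1$, I would use $\lim_{n\to\infty}\varphi_i(n)=0$ to pick $N_i$ with $\varphi_i(n)<\varepsilon$ for $n\ge N_i$, and set $N:=\max\{N_1,\dots,N_{i_0-1}\}$. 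Then for $n\ge N$ every term $\varphi_i(n)$ is $<\varepsilon$, whence $\tilde\varphi(n)=\sup_{i\in\mathbb N}\varphi_i(n)\le\varepsilon$; as $\varepsilon>0$ was arbitrary, $\tilde\varphi(n)\to 0$.

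No step here presents a genuine obstacle; the essential structural observation is simply that the tail of the family is dominated uniformly (by $2^{-i}$), which reduces the limit computation to controlling a finite maximum of the functions $\varphi_1,\dots,\varphi_{i_0-1}$. This completes the verification that $\tilde\varphi$ is a scale function.
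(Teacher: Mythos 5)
Your proof is correct and rests on exactly the same two facts as the paper's: the uniform tail bound $\varphi_i(n)\le 2^{-i}$ handles all large indices $i$ at once, and the individual convergence $\varphi_i(n)\to 0$ handles the remaining finitely many. The only difference is presentational — you give a direct $\varepsilon$--$N$ argument, while the paper argues by contradiction via a subsequence on which the supremum is attained — so the content is essentially identical (and your version is arguably cleaner).
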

\begin{proof}
Clearly $\tilde\varphi$ is a bounded positive nonincreasing function. We must show that $\lim_{n\rightarrow\infty}\tilde\varphi(n)=0$. Assume, on the contrary, that there exists a subsequence $\{n_j\}_{j\in\mathbb N}$ such that $\lim_{j\rightarrow\infty}\tilde\varphi(n_j)>0$. By the definition of $\tilde\varphi$, for each $n\in\mathbb N$ there exists some $i_n\in\mathbb N$ such that
\[
\tilde\varphi(n)=\varphi_{i_n}(n).
\]
We must consider two cases.

(a) $\{i_{n_j}\}_{j\in\mathbb N}$ contains an infinite constant subfamily, i.e., $i_{n_j}$ coincides with some $i_0$ for infinitely many $j$. Then $\tilde\varphi(n_j)=\varphi_{i_0}(n_j)$ for infinitely many $j$. But the latter sequence tends to zero as $j\rightarrow\infty$. Thus, $\tilde\varphi$ tends to $0$ along a subsequence of $\{n_j\}$, a contradiction.\smallskip

(b)  $\{i_{n_j}\}_{j\in\mathbb N}$ does not contain an infinite constant subfamily, i.e., $\lim_{j\rightarrow\infty}i_{n_j}=\infty$.  Then we have
\[
\lim_{j\rightarrow\infty}\tilde\varphi(n_j)=\lim_{j\rightarrow\infty}\varphi_{i_{n_j}}(n_j)\le \lim_{j\rightarrow\infty}\frac{1}{2^{i_{n_j}}}=0,
\] 
a contradiction proving the lemma.
\end{proof}

Next, we consider the scale function $\bar{\varphi}:=\sqrt{\tilde\varphi}$.
Since $\varphi_i\le\tilde\varphi$ for all $i$ and $\tilde\varphi\le \bar{\varphi}$,  
\[
\bigcup_{i}\mathfrak{B}_{\varphi_i}((A,V);B;\alpha)\subset\mathfrak{B}_{\tilde\varphi}((A,V);B;\alpha)\subset \mathfrak{B}_{\bar{\varphi}}((A,V);B;\alpha).
\]
On the other hand, 
\[
\varlimsup_{n\rightarrow\infty}\frac{\bar{\varphi}(n)}{\tilde\varphi(n)}=\infty.
\]
Hence, due to property (4) (proved below), $ \mathfrak{B}_{\bar{\varphi}}((A,V);B;\alpha)\setminus \mathfrak{B}_{\tilde\varphi}((A,V);B;\alpha)\ne\emptyset$. This shows that
$A\hat\otimes_\alpha B\setminus\cup_{i}\mathfrak{B}_{\varphi_i}((A,V);B;\alpha)\ne\emptyset$, a contradiction completing the proof of the third statement of (2).\medskip

(3) Construction of $\varphi_{ub}$ is the same as of $\tilde\varphi$ above. In turn, if $\Phi=\{\varphi_i\}_{i\in\mathbb N}$, then 
we define
\[
\varphi_{lb}(n):=\min_{1\le i\le n}\varphi_i(n),\quad n\in\mathbb N.
\]
Clearly, $\varphi_{lb}$ is a scale function and for each $i\in\mathbb N$, $\varphi_{lb}(n)\le\varphi_i(n)$ for all $n\ge i$. This implies that $\mathfrak{B}_{\varphi_{lb}}((A,V);B;\alpha)\subset \mathfrak{B}_{\varphi_i}((A,V);B;\alpha)$ for all $i\in\mathbb N$, as required.\medskip

(4) We begin with some auxiliary results. Fix $b\in B$ of norm one and a linear functional $b^*\in B^*$ of norm one such that $b^*(b)=1$. Consider linear operators:
$I_b: A\rightarrow A\otimes B$, $I_b(a):=a\otimes b$ and ${\rm Id}\otimes b^*: A\otimes B\rightarrow A$, $({\rm Id}\otimes b^*)\bigl(\sum_{j}a_j\otimes b_j\bigr):=\sum_j b^*(b_j)\cdot a_j$.
\begin{Lm}\label{l3.1}
For a reasonable crossnorm $\alpha$ on $A\otimes B$ the operator $I_b$ is an isometric embedding and the operator ${\rm Id}\otimes b^*$ is bounded.
\end{Lm}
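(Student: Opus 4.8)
The plan is to read both assertions directly off the two defining clauses of a reasonable crossnorm recalled at the beginning of this subsection. For $I_b$: since $\|b\|_B=1$, the crossnorm identity gives, for every $a\in A$,
\[
\alpha(I_b(a))=\alpha(a\otimes b)=\|a\|_A\cdot\|b\|_B=\|a\|_A,
\]
so $I_b:A\to (A\otimes B,\alpha)\subset A\hat\otimes_\alpha B$ is a linear isometry; being isometric it is in particular injective, hence an isometric embedding, and no passage to the completion is needed for this part.

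For ${\rm Id}\otimes b^*$ I would argue by duality. Let $u=\sum_j a_j\otimes b_j\in A\otimes B$ be a finite tensor and let $a^*\in A^*$ be arbitrary. Bilinearity gives
\[
a^*\bigl(({\rm Id}\otimes b^*)(u)\bigr)=\sum_j b^*(b_j)\,a^*(a_j)=\langle u,\,a^*\otimes b^*\rangle,
\]
where $\langle\,\cdot\,,\cdot\,\rangle$ is the canonical pairing between $A\otimes B$ and $A^*\otimes B^*\subset (A\otimes B)^*$. By the definition of the dual norm $\alpha'$ and the second clause of the notion of reasonable crossnorm,
\[
\bigl|\langle u,a^*\otimes b^*\rangle\bigr|\le \alpha(u)\cdot\alpha'(a^*\otimes b^*)=\alpha(u)\cdot\|a^*\|_{A^*}\cdot\|b^*\|_{B^*}=\alpha(u)\cdot\|a^*\|_{A^*}.
\]
Taking the supremum over $a^*$ in the closed unit ball of $A^*$ yields $\|({\rm Id}\otimes b^*)(u)\|_A\le\alpha(u)$ for all $u\in A\otimes B$; since $({\rm Id}\otimes b^*)(a\otimes b)=b^*(b)\,a=a$ while $\alpha(a\otimes b)=\|a\|_A$, the operator has norm exactly $1$ on the dense subspace $A\otimes B$ and therefore extends uniquely to a norm-one bounded operator $A\hat\otimes_\alpha B\to A$.

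There is no genuine obstacle here: the whole argument is an unwinding of the definition of a reasonable crossnorm. The only points requiring a word of care are the elementary identity $a^*\circ({\rm Id}\otimes b^*)=\langle\,\cdot\,,a^*\otimes b^*\rangle$, which is mere bilinearity, and the standard fact that an isometry (respectively, a bounded operator) defined on the dense subspace $A\otimes B$ of $A\hat\otimes_\alpha B$ passes to the completion with unchanged norm.
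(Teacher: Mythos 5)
Your proof is correct and follows essentially the same route as the paper: the isometry of $I_b$ is read off from the crossnorm identity $\alpha(a\otimes b)=\|a\|_A\|b\|_B$, and the boundedness of ${\rm Id}\otimes b^*$ comes from the identity $a^*\circ({\rm Id}\otimes b^*)=a^*\otimes b^*$ together with $\alpha'(a^*\otimes b^*)=\|a^*\|_{A^*}\|b^*\|_{B^*}$ and a supremum over the unit ball of $A^*$. The extra remarks about exactness of the norm and extension to the completion are consistent with what the paper states immediately after the lemma.
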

\begin{proof}
By the definition of a reasonable crossnorm,
\[
\alpha(I_b(a))=\alpha(a\otimes b)=\|a\|_A\cdot\|b\|_B=\|a\|_A.
\]
This proves the first statement.
 
Next, for a linear functional $a^*\in A^*$ we clearly have $a^*\circ ({\rm Id}\otimes b^*)=a^*\otimes b^*$. By the definition of a reasonable crossnorm, $\alpha'(a^*\otimes b^*)=\|a^*\|_{A^*}\cdot\|b^*\|_{B^*}$, see, e.g., \cite{R}; here $\alpha'$ is the dual norm of $\alpha$. Hence, for all $x\in A\otimes B$,
\[
\begin{array}{l}
\displaystyle
\|({\rm Id}\otimes b^*)(x)\|_A=\sup_{a^*\in A^*\, ,\,\|a^*\|_{A^*}=1}|a^*\bigl(({\rm Id}\otimes b^*)(x)\bigr)|=\sup_{a^*\in A^*\, ,\,\|a^*\|_{A^*}=1}|(a^*\otimes b^*)(x)|\\
\\
\displaystyle \le \sup_{a^*\in A^*\, ,\,\|a^*\|_{A^*}=1}\alpha'(a^*\otimes b^*)\cdot\|x\|_{A\otimes B}=\|b\|_{B*}\cdot\|x\|_{A\otimes B}=\|x\|_{A\otimes B},
\end{array}
\]
as required.
\end{proof}
The lemma implies that operator ${\rm Id}\otimes b^*$ is extended by continuity to a bounded linear operator $A\hat\otimes_\alpha B\rightarrow A$ of norm $\le 1$ (denoted by the same symbol). 
\begin{Lm}\label{l3.3}
For each $a\in A$, 
\[
E_n(a):=\inf_{h\in V_n}\|a-h\|_A=E_n(I_b(a)):=\inf_{h\in V_n\otimes B}\alpha(I_b(a)-h).
\]
\end{Lm}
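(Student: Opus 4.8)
The plan is to establish the two inequalities $E_n(I_b(a))\le E_n(a)$ and $E_n(a)\le E_n(I_b(a))$ separately, using the two operators $I_b$ and ${\rm Id}\otimes b^*$ introduced before Lemma \ref{l3.1}.

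For the first inequality I would use that $I_b$ is an isometric embedding (Lemma \ref{l3.1}) together with the obvious inclusion $I_b(V_n)\subset V_n\otimes B$. Indeed, for every $h\in V_n$ we have $I_b(h)=h\otimes b\in V_n\otimes B$ and $\alpha\bigl(I_b(a)-I_b(h)\bigr)=\alpha\bigl(I_b(a-h)\bigr)=\|a-h\|_A$; taking the infimum over $h\in V_n$ yields $E_n(I_b(a))\le E_n(a)$.

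For the reverse inequality I would use the bounded extension ${\rm Id}\otimes b^*:A\hat\otimes_\alpha B\rightarrow A$ of norm $\le 1$ (obtained right after Lemma \ref{l3.1}). Two observations suffice: first, $({\rm Id}\otimes b^*)\bigl(I_b(a)\bigr)=({\rm Id}\otimes b^*)(a\otimes b)=b^*(b)\cdot a=a$; second, ${\rm Id}\otimes b^*$ maps $V_n\otimes B$ into $V_n$, since for $h=\sum_j a_j\otimes b_j$ with $a_j\in V_n$ one gets $({\rm Id}\otimes b^*)(h)=\sum_j b^*(b_j)\cdot a_j\in V_n$. Consequently, for each $h\in V_n\otimes B$,
\[
\|a-({\rm Id}\otimes b^*)(h)\|_A=\|({\rm Id}\otimes b^*)\bigl(I_b(a)-h\bigr)\|_A\le\alpha\bigl(I_b(a)-h\bigr),
\]
and since $({\rm Id}\otimes b^*)(h)$ runs over a subset of $V_n$ as $h$ runs over $V_n\otimes B$, taking the infimum gives $E_n(a)\le E_n(I_b(a))$. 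Combining the two inequalities proves the lemma.

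I do not anticipate a genuine obstacle here; the only points requiring a modicum of care are that $V_n\otimes B$ in \eqref{ba} denotes the algebraic tensor product, so that ${\rm Id}\otimes b^*$ is literally defined on it and lands in $V_n$, and the elementary bookkeeping that the composition $({\rm Id}\otimes b^*)\circ I_b$ equals ${\rm Id}_A$ — both of which are immediate from the definitions.
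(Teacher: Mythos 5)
Your proof is correct and follows essentially the same route as the paper: one inequality from the isometry of $I_b$ (plus $I_b(V_n)\subset V_n\otimes B$), the other from the norm-one left inverse ${\rm Id}\otimes b^*$ (plus $({\rm Id}\otimes b^*)(V_n\otimes B)\subset V_n$). You merely make explicit the two containments of approximating subspaces that the paper's two-line argument leaves implicit.
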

\begin{proof}
Since $I_b$ is an isometric embedding, $E_n(I_b(a))\le \|I_b\|\cdot E_n(a)=E_n(a)$. Conversely, since $({\rm Id}\otimes b^*)\circ I_b={\rm Id}$ and $\|{\rm Id}\otimes b^*\|\le 1$, $E_n(a)\le \|{\rm Id}\otimes b^*\|\cdot E_n(I_b(a))=E_n(I_b(a))$.
\end{proof}
We are ready to prove property (4). By Definition \ref{def1}, if $\varphi'\le c\cdot\varphi$ for some $c\in (0,\infty)$, then $\mathfrak{B}_{\varphi'}((A,V);B;\alpha)\subset \mathfrak{B}_{\varphi}((A,V);B;\alpha)$. Now assume that the latter inclusion holds but there exists a subsequence $\{n_i\}_{i\in\mathbb N}\subset\mathbb N$ such that 
\begin{equation}\label{e3.1}
\lim_{i\rightarrow\infty}\frac{\varphi'(n_i)}{\varphi(n_i)}=\infty.
\end{equation}
Let us define a sequence $\{c_n\}_{n\in\mathbb N}\subset (0,\infty)$ by the formula 
\[
c_n:=e^{-\frac{1}{\varphi'(n_i)}},\quad n_i\le n<n_{i+1},\ n\in\mathbb N.
\]
Clearly, $c_{n+1}\le c_n$  for all $n$, and $\lim_{n\rightarrow\infty} c_n=0$. As follows from the Bernstein theorem (adapted to the case of Banach spaces by A. Timan, see \cite[Sect.\,2.5]{T} and also \cite[Th.\,5.1]{P2}), there exists an element $a\in A$ such that $E_n(a)=c_n$ for all $n$. Then, due to Lemma \ref{l3.3}, the element $\tilde a:=I_b(a)$ satisfies $E_n(\tilde a)=c_n$ for all $n$. Note that $\tilde a\in \mathfrak{B}_{\varphi'}((A,V);B;\alpha)$. Indeed,
\[
\varliminf_{i\rightarrow\infty}(E_{n_i}(\tilde a))^{\varphi'(n_i)}=\varliminf_{i\rightarrow\infty}\left(e^{-\frac{1}{\varphi'(n_i)}}\right)^{\varphi'(n_i)}=\frac{1}{e}.
\]
Let us show that $\tilde a\not\in \mathfrak{B}_{\varphi}((A,V);B;\alpha)$. In fact, observe that 
\[
c_n^{\,\varphi(n)}=e^{-\frac{\varphi(n)}{\varphi'(n_i)}}\geq e^{-\frac{\varphi(n_i)}{\varphi'(n_i)}}\quad {\rm for}\ n_i\le n<n_{i+1}.
\]
Hence, due to \eqref{e3.1},
\[
\varliminf_{n\rightarrow\infty}c_n^{\,\varphi(n)}=1.
\]
This implies
\[
\varliminf_{n\rightarrow\infty}(E_n(\tilde a))^{\varphi(n)}=\varliminf_{n\rightarrow\infty}c_n^{\,\varphi(n)}=1.
\]
Thus, $\tilde a\not\in \mathfrak{B}_{\varphi}((A,V);B;\alpha)$ which contradicts our assumption and shows that \eqref{e3.1} is false. Therefore
\[
\varlimsup_{n\rightarrow\infty}\frac{\varphi'(n)}{\varphi(n)}<\infty,
\]
as required.
\end{proof}

\subsection{Proof of Corollary \ref{lat}}
\begin{proof}
Let $Q=((A,V);B;\alpha)\in\mathscr C$.
By definition, $\mathfrak B_{\{\varphi_1\}}(Q)\vee \mathfrak B_{\{\varphi_2\}}(Q)$ is the minimal element of $(\mathscr{B}(Q),\subseteq)$ containing both of these sets, in particular,
\begin{equation}\label{e3.2}
\mathfrak B_{\{\varphi_1\}}(Q)\cup \mathfrak B_{\{\varphi_2\}}(Q)\subseteq \mathfrak B_{\{\varphi_1\}}(Q)\vee \mathfrak B_{\{\varphi_2\}}(Q).
\end{equation}
Since $\varphi_i\le \max(\varphi_1,\varphi_2)$, $i=1,2$, \begin{equation}\label{e3.3}
\mathfrak B_{\{\varphi_1\}}(Q)\vee \mathfrak B_{\{\varphi_2\}}(Q)\subseteq \mathfrak B_{\{\max(\varphi_1,\varphi_2)\}}(Q).
\end{equation}

On the other hand, let $x\in \mathfrak B_{\{\max(\varphi_1,\varphi_2)\}}(Q)$. Then there exists a subsequence $\{n_k\}_{k\in\mathbb N}\subset\mathbb N$ such that for some $\rho\in (0,1)$ and all $k$,
\[
(E_{n_k}(x))^{\max(\varphi_1(n_k),\varphi_2(n_k))}\le \rho .
\]
Passing to a subsequence, if necessary,  we may assume that
$\max(\varphi_1(n_k),\varphi_2(n_k))=\varphi_i(n_k)$ for all $k$, where $i\in\{1,2\}$. Then the previous inequality implies that
$x\in \mathfrak B_{\{\varphi_i\}}(Q)$. This shows that
\begin{equation}\label{e3.4}
\mathfrak B_{\{\max(\varphi_1,\varphi_2)\}}(Q)\subseteq \mathfrak B_{\{\varphi_1\}}(Q)\cup \mathfrak B_{\{\varphi_2\}}(Q).
\end{equation}

Combining \eqref{e3.2}--\eqref{e3.4} we get the first statement of the corollary.\smallskip

Further, by definition, $\mathfrak B_{\{\varphi_1\}}(Q)\wedge \mathfrak B_{\{\varphi_2\}}(Q)$ is the maximal element of $(\mathscr{B}(Q),\subseteq)$ contained in both of these sets. In particular,
\[
\mathfrak B_{\{\varphi_1\}}(Q)\wedge \mathfrak B_{\{\varphi_2\}}(Q)\subseteq \mathfrak B_{\{\varphi_1\}}(Q)\cap \mathfrak B_{\{\varphi_2\}}(Q).
\]
Since $\varphi_i\ge \min(\varphi_1,\varphi_2)$, $i=1,2$, Theorem \ref{prop2.1}\,(4) implies that
\begin{equation}\label{e3.5}
\mathfrak B_{\{\min(\varphi_1,\varphi_2)\}}(Q)\subseteq \mathfrak B_{\{\varphi_1\}}(Q)\wedge \mathfrak B_{\{\varphi_2\}}(Q).\end{equation}
Further, since each $\mathfrak B_{\{\varphi\}}(Q)\subseteq \mathfrak B_{\{\varphi_1\}}(Q)\wedge \mathfrak B_{\{\varphi_2\}}(Q)$ belongs to $\mathfrak B_{\{\varphi_1\}}(Q)\cap \mathfrak B_{\{\varphi_2\}}(Q)$, Theorem \ref{prop2.1}\,(4) implies that there exists a constant $c>0$ such that $\varphi\le c\cdot\varphi_i$, $i=1,2$. That is,
$\varphi\le c\cdot \min(\varphi_1,\varphi_2)$ which yields
\[
\mathfrak B_{\{\varphi\}}(Q)\subseteq \mathfrak B_{\{\min(\varphi_1,\varphi_2)\}}(Q).
\]
This and \eqref{e3.5} show that
\[
\mathfrak B_{\{\min(\varphi_1,\varphi_2)\}}(Q)= \mathfrak B_{\{\varphi_1\}}(Q)\wedge \mathfrak B_{\{\varphi_2\}}(Q),
\]
as required.
\end{proof}

\subsection{Proof of Theorem \ref{prop2.2}}
\begin{proof}
(1) Let $Q=((A,V);B;\alpha)\in\mathscr C$. First, we show that
\begin{equation}\label{embed}
\mathfrak{B}_{\Sigma(\kappa)}(Q)\subset \mathfrak{B}^*_{\kappa}(Q).
\end{equation}
To avoid abuse of notation, we set 
\begin{equation}\label{abuse}
\varphi:=\Sigma(\kappa),\quad \varphi(n)=\Sigma(\kappa)(n):=\sum_{i=n}^\infty\kappa(i),\quad n\in\mathbb N.
\end{equation}
Now,
for $x\in \mathfrak{B}_{\varphi}(Q)\setminus (V\otimes B)$, let $\{n_k\}_{k\in\mathbb N}\subset\mathbb N$ be a subsequence such that
\[
\frac{\varphi(n_{k+1})}{\varphi(n_k)}\le\frac 12\quad {\rm and}\quad
(E_{n_k}(x))^{\varphi(n_k)}<\rho<1\quad {\rm for\ all}\quad k\in\mathbb N.
\]
By the definition of the best approximation, $0<E_n(x)\le E_{n_k}(x)$ for all $n_{k}\le n< n_{k+1}$. From here and the previous inequalities we obtain (since $\ln\rho<0$)
\[
\begin{array}{l}
\displaystyle
\sum_{n=n_1}^\infty \kappa(n)\cdot\ln (E_n(x))=\sum_{k=1}^\infty\sum_{n=n_k}^{n_{k+1}-1}\kappa(n)\cdot \ln(E_n(x))\le\sum_{k=1}^\infty\sum_{n=n_k}^{n_{k+1}-1}\kappa(n)\cdot\ln (E_{n_k}(x))\\
\\
\displaystyle \le \sum_{k=1}^\infty \frac{\ln\rho\cdot (\varphi(n_k)-\varphi(n_{k+1}) )}{\varphi(n_k)}\le \sum_{k=1}^\infty \frac{\ln\rho}{2}=-\infty.
\end{array}
\]
This implies that $x\in \mathfrak{B}_{\kappa}^*(Q)$, see \eqref{beurling}, and proves \eqref{embed}.\medskip

Next, we show that $\mathfrak{B}_{\Sigma(\kappa)}(Q)\ne \mathfrak{B}^*_{\kappa}(Q)$. The proof is based on
\begin{Lm}\label{lem3.4}
There exists an increasing to $\infty$ function $h:\mathbb N\rightarrow (0,\infty)$ such that function $h\cdot\varphi$  decreases to $0$, and
\[
\sum_{n=1}^\infty\frac{\kappa(n)}{h(n)\cdot\varphi(n)}=\infty .
\]
\end{Lm}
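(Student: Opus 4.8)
The plan is to deduce everything from one preliminary fact, namely that $\displaystyle\sum_{n=1}^\infty\frac{\kappa(n)}{\varphi(n)}=\infty$ (recall from \eqref{abuse} that $\varphi=\Sigma(\kappa)$, so $\kappa(n)=\varphi(n)-\varphi(n+1)$, and that $\varphi(n)\to 0$). Writing $b_n:=\frac{\kappa(n)}{\varphi(n)}=1-\frac{\varphi(n+1)}{\varphi(n)}\in(0,1)$, I would argue by a dichotomy. If $b_n\not\to 0$, there are $c>0$ and infinitely many $n$ with $b_n\ge c$, so $\sum_n b_n=\infty$ trivially. If $b_n\to 0$, then $b_n\le\frac12$ for all $n\ge N_0$, whence $-\ln(1-b_n)\le 2b_n$, and summing and telescoping gives
\[
2\sum_{n\ge N_0}b_n\ \ge\ \sum_{n\ge N_0}\bigl(\ln\varphi(n)-\ln\varphi(n+1)\bigr)=\ln\varphi(N_0)-\lim_{n\to\infty}\ln\varphi(n)=+\infty .
\]
So in all cases $\sum_n b_n=\infty$. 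I expect this dichotomy to be the only genuinely non-routine point of the proof; it is precisely the reason the inclusion \eqref{embed} is proper.

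Granting this, I would take the partial sums of the divergent series as the ``slow'' growth rate. Set $B_0:=0$, $B_n:=\sum_{k=1}^n b_k$, and define $h(n):=1+B_{n-1}$. Then $h:\mathbb N\to(0,\infty)$ is strictly increasing (since $b_k>0$) and $h(n)\to\infty$ by the previous paragraph. Monotonicity of $h\cdot\varphi$ I would get from the algebraic identity obtained by substituting $\varphi(n+1)=\varphi(n)(1-b_n)$ and $B_n=B_{n-1}+b_n$:
\[
h(n+1)\varphi(n+1)=(1+B_{n-1}+b_n)\,\varphi(n)(1-b_n)=h(n)\varphi(n)-\kappa(n)B_n ,
\]
so $h\cdot\varphi$ strictly decreases. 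That $h(n)\varphi(n)\to 0$ I would get from the elementary bound $b_k\le-\ln\frac{\varphi(k+1)}{\varphi(k)}$, which telescopes to $B_{n-1}\le\ln\frac{\varphi(1)}{\varphi(n)}$, hence $h(n)\varphi(n)\le\varphi(n)+\varphi(n)\ln\frac{\varphi(1)}{\varphi(n)}\to 0$ as $\varphi(n)\to 0$ (using $t\ln\frac1t\to 0$).

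Finally I would verify the divergence of the weighted series: $\displaystyle\sum_n\frac{\kappa(n)}{h(n)\varphi(n)}=\sum_n\frac{b_n}{1+B_{n-1}}$. Since $B_{n-1}\le B_n$ and $B_n\ge 1$ for all large $n$, one has $\frac{b_n}{1+B_{n-1}}\ge\frac{b_n}{2B_n}$ eventually, so it suffices to see $\sum_n\frac{b_n}{B_n}=\infty$; this is the Abel--Dini theorem, immediate here from
\[
\sum_{n=m+1}^N\frac{b_n}{B_n}\ \ge\ \frac{1}{B_N}\sum_{n=m+1}^N b_n=\frac{B_N-B_m}{B_N}\longrightarrow 1\qquad(N\to\infty),
\]
which shows the Cauchy criterion for convergence fails. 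Assembling these three points gives the lemma.
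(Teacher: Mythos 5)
Your proof is correct, and it takes a genuinely different (and considerably more economical) route than the paper's. Both arguments rest on the same preliminary divergence $\sum_n \kappa(n)/\varphi(n)=\infty$ with $\varphi=\Sigma(\kappa)$ --- the paper simply asserts this as an observation (its display (3.3)), whereas you actually prove it via the dichotomy on $b_n=1-\varphi(n+1)/\varphi(n)$, which is a worthwhile addition since it is exactly the Abel--Dini phenomenon driving the whole lemma. From there the paper builds $h$ by an elaborate induction, choosing cut points $N_k$ and decreasing exponents $\alpha_k$ and setting $h(n)=\bigl(\prod_{i<k}\varphi^{-\alpha_i}(N_i+1)\bigr)\varphi^{-\alpha_k}(n)$ on each block, with four separate conditions (i)--(iv) to be arranged; you instead take $h(n)=1+B_{n-1}$ with $B_n$ the partial sums of the divergent series, and verify the three required properties by direct computation: the identity $h(n+1)\varphi(n+1)=h(n)\varphi(n)-\kappa(n)B_n$ gives strict monotonicity, the telescoping bound $B_{n-1}\le\ln\bigl(\varphi(1)/\varphi(n)\bigr)$ together with $t\ln(1/t)\to 0$ gives $h\varphi\to 0$, and a second application of Abel--Dini gives $\sum_n b_n/(1+B_{n-1})=\infty$. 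All the individual estimates check out ($-\ln(1-x)\le 2x$ on $[0,\tfrac12]$, $1-x\le-\ln x$, $1+B_{n-1}\le 2B_n$ eventually, and the failure of the Cauchy criterion for $\sum b_n/B_n$). What your approach buys is an explicit closed-form $h$ and a proof about a third the length; what the paper's buys is nothing extra here as far as I can see --- its block construction is just a harder way to slow down the divergent series. One cosmetic remark: your $h$ is defined with $h(1)=1$ and the paper's statement asks for $h:\mathbb N\to(0,\infty)$ increasing to $\infty$ with $h\varphi$ decreasing to $0$, all of which your $h$ satisfies, so no adjustment is needed.
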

\begin{proof}

Without loss of generality we may assume that $\varphi(n)\le 1$ for all $n$. First, observe that the positive function $\frac{1}{\varphi}$ increases to $\infty$ and satisfies
\begin{equation}\label{equ3.3}
\sum_{n=1}^\infty\frac{\kappa(n)}{\varphi(n)}=\infty.
\end{equation}
Using this, we construct by induction an increasing subsequence 
$\{N_k\}_{k\in\mathbb Z_+}\subset\mathbb Z_+$ and a decreasing to $0$ sequence $\{\alpha_k\}_{k\in\mathbb Z_+}\subset (0,\infty)$ such that
\begin{itemize}
\item[(i)]
\[
\frac{\varphi^{1-\alpha_{k-1}}(N_{k-1}+1)}{\varphi^{\alpha_{k}}(N_{k-1}+1)\cdot\varphi^{1-\alpha_{k-1}}(N_{k-1})}<1,\qquad  k\ge 2;
\]
\item[(ii)]
\[
\sum_{n=N_{k-1}+1}^{N_{k}}\frac{\left(\prod_{i=0}^{k-1}\varphi^{\alpha_{i}}(N_{i}+1)\right)\cdot\kappa(n)}{\varphi^{1-\alpha_{k}}(n)}>1,\qquad  k\in\mathbb N;
\]
\item[(iii)]
\[
\frac{\varphi^{1-\alpha_k}(N_k)}{\prod_{i=0}^{k-1}\varphi^{\alpha_{i}}(N_{i}+1)}\le \frac 1k,\qquad  k\in\mathbb N;
\]
\item[(iv)]
\[ 
\left(\prod_{i=0}^{k-1}\varphi^{-\alpha_{i}}(N_{i}+1)\right)\cdot \varphi^{-\alpha_k}(N_k)\ge k,\qquad k\in\mathbb N.
\]
\end{itemize}
We set $N_0=0$, $\alpha_0=1$ and assume that the required $N_1,\dots, N_{k-1}$ and $\alpha_1,\dots, \alpha_{k-1}$ are chosen. Let us define $N_{k}$ and $\alpha_{k}$. To this end, due to \eqref{equ3.3} there exists some integer $N_{k}'>N_{k-1}$ such that
\[
\sum_{n=N_{k-1}+1}^{N_{k}'}\frac{\left(\prod_{i=0}^{k-1}\varphi^{\alpha_{i}}(N_{i}+1)\right)\cdot\kappa(n)}{\varphi(n)}>1.
\]
Thus we can choose a positive number $\alpha_{k}<\alpha_{k-1}$ such that
\[
\sum_{n=N_{k}+1}^{N_{k}'}\frac{\left(\prod_{i=0}^{k-1}\varphi^{\alpha_{i}}(N_{i}+1)\right)\cdot\kappa(n)}{\varphi^{1-\alpha_{k}}(n)}>1,
\]
and for $k\ge 2$ (because $\varphi$ is decreasing),
\[
 \frac{\varphi^{1-\alpha_{k-1}}(N_{k-1}+1)}{\varphi^{-\alpha_{k}}(N_{k-1}+1)\cdot \varphi^{1-\alpha_{k-1}}(N_{k-1})}<1.
\]
Further, since $\varphi^{1-\alpha_k}$ is decreasing to $0$ and $\varphi^{-\alpha_k}$ is increasing to $\infty$, we can choose an integer $N_{k}>N_{k}'$ such that 
\[
\frac{\varphi^{1-\alpha_k}(N_k)}{\prod_{i=0}^{k-1}\varphi^{\alpha_{i}}(N_{i}+1)}\le \frac 1k\quad {\rm and}\quad \left(\prod_{i=0}^{k-1}\varphi^{-\alpha_{i}}(N_{i}+1)\right)\cdot \varphi^{-\alpha_k}(N_k)\ge k,
\]
as required.

Now, let us define 
\begin{equation}\label{equ3.5}
h(n):=\left(\prod_{i=0}^{k-1}\varphi^{-\alpha_{i}}(N_{i}+1)\right)\cdot \varphi^{-\alpha_{k}}(n), \quad N_{k-1}+1\le n\le N_{k},\quad k\in\mathbb N.
\end{equation}
So $\{h(n)\}$ is increasing for $N_{k-1}+1\le n\le N_{k}$ and, because $\varphi$ is decreasing and $\le 1$,
\[
\frac{h(N_k+1)}{h(N_k)}=\frac{\varphi^{-\alpha_{k}}(N_{k}+1)\cdot\varphi^{-\alpha_{k+1}}(N_k+1)}{\varphi^{-\alpha_{k}}(N_{k})}\ge 1.
\]
Thus $h$ is increasing and since by (iv) $h(N_k)\ge k$, it is unbounded.

Next,
\[
h(n)\cdot\varphi(n)=\left(\prod_{i=0}^{k-1}\varphi^{-\alpha_{i}}(N_{i}+1)\right)\cdot \varphi^{1-\alpha_k}(n),\quad N_{k-1}+1\le n\le N_{k}.
\]
So $h\cdot\varphi$ is decreasing on each such an interval. Also, by (i),
\[
\frac{h(N_k+1)\cdot\varphi(N_k+1)}{h(N_k)\cdot\varphi(N_k)}=
\frac{\varphi^{1-\alpha_{k}}(N_{k}+1)\cdot\varphi^{-\alpha_{k+1}}(N_k+1)}{\varphi^{1-\alpha_k}(N_k)}<1,
\]
so $h\cdot\varphi$ is decreasing and since by (iii)
\[
h(N_k)\cdot\varphi(N_k)=\left(\prod_{i=0}^{k-1}\varphi^{-\alpha_{i}}(N_{i}+1)\right)\cdot \varphi^{1-\alpha_k}(N_k)\le\frac{1}{k},
\]
$\lim_{n\rightarrow\infty}h(n)\cdot\varphi(n)=0$.

Finally, by (ii),
\[
\sum_{j=1}^\infty\frac{\kappa(j)}{h(j)\cdot\varphi(j)}=\sum_{k=0}^\infty\sum_{n=N_{k-1}+1}^{N_{k}}\frac{\left(\prod_{i=0}^{k-1}\varphi^{\alpha_{i}}(N_{i}+1)\right)\cdot\kappa(n)}{\varphi^{1-\alpha_{k}}(n)}>\sum_{k=1}^\infty 1=\infty.
\]
This completes the proof of the lemma.
\end{proof}

Further, as in the proof of Theorem \ref{prop2.1}\,(4) (see the version of the Bernstein theorem), there exists an element $x\in A\hat\otimes_\alpha B$ such that
\[
E_n(x)=e^{-\frac{1}{h(n)\cdot\varphi(n)}},\quad n\in\mathbb N.
\]
(Here we used that $h\cdot\varphi$ decreases to $0$.) 
We have
\[
\sum_{k=1}^\infty \kappa(n)\cdot\ln (E_n(x))=-\sum_{k=1}^\infty \frac{\kappa(n)}{h(n)\cdot\varphi(n)}=-\infty,
\]
that is, $x\in\mathfrak{B}^*_{\kappa}(Q)$. On the other hand, by Lemma \ref{lem3.4},
\[
\varliminf_{n\rightarrow\infty}(E_n(x))^{\varphi(n)}=
\varliminf_{n\rightarrow\infty}e^{-\frac{1}{h(n)}}=1,
\]
that is, $x\not\in\mathfrak{B}_{\Sigma(\kappa)}(Q)$. Thus, we have proved that
\[
\mathfrak{B}_{\Sigma(\kappa)}(Q)\subsetneq \mathfrak{B}^*_{\kappa}(Q).
\]

To complete the proof of part (1) of the theorem, we need to show that
$\mathfrak{B}_{\Sigma(\kappa)}(Q)$ is maximal among all subsets in $\mathscr{B}(Q)$ containing in $\mathfrak{B}^*_{\kappa}(Q)$.

Assume, on the contrary, that there exists  a subset $\mathfrak{B}_{\varphi'}(Q)\supsetneq \mathfrak{B}_{\Sigma(\kappa)}(Q)$ of
$\mathfrak{B}^*_{\kappa}(Q)$. Then due to Theorem \ref{prop2.1}\,(4), there exists an increasing subsequence $\{n_k\}_{k\in\mathbb N}\subset\mathbb N$ such that
\[
\lim_{k\rightarrow\infty}\frac{\varphi(n_k)}{\varphi'(n_k)}=0,
\]
where $\varphi(n):=\sum_{i=n}^\infty \kappa(i)$, $n\in\mathbb N$.

Passing to another subsequence, if necessary, we may assume that 
\[
\sum_{k=1}^\infty \frac{\varphi(n_k)}{\varphi'(n_k)}<\infty.
\]
We set 
\[
\tilde\varphi'(n):=\varphi'(n_k),\quad {\rm for}\quad n_{k}\le n< n_{k+1},\quad k,\, n\in\mathbb N.
\]
Since $\tilde\varphi'$ is nonincreasing and tends to $0$, by the version of the Bernstein theorem (see the proof of Theorem \ref{prop2.1}\,(4) above) there exists an element $x\in A\hat\otimes_\alpha B$ such that
\[
E_n(x)=e^{-\frac{1}{\tilde\varphi'(n)}},\quad n\in\mathbb N.
\]
We have
\[
\varliminf_{n\rightarrow\infty} (E_n(x))^{\varphi'(n)}=\varliminf_{n\rightarrow\infty}e^{-\frac{\varphi'(n)}{\tilde\varphi'(n)}}=e^{-1}.
\]
Hence, $x\in \mathfrak{B}_{\varphi'}(Q)$. On the other hand,
\[
\begin{array}{l}
\displaystyle
\sum_{n=n_1}^\infty \kappa(n)\cdot\ln (E_n(x))=-\sum_{k=1}^\infty\sum_{n=n_k}^{n_{k+1}-1}\frac{\kappa(n)}{\tilde\varphi'(n)}=-\sum_{k=1}^\infty\sum_{n=n_k}^{n_{k+1}-1}\frac{\kappa(n)}{\varphi'(n_k)}\\
\\
\displaystyle =- \sum_{k=1}^\infty \frac{\varphi(n_k)-\varphi(n_{k+1}) }{\varphi'(n_k)}\ge -\sum_{k=1}^\infty \frac{\varphi(n_k)}{\varphi'(n_k)}>-\infty.
\end{array}
\]
Thus $x\not\in \mathfrak{B}^*_{\kappa}(Q)$, a contradiction proving the claim.

The proof of part (1) of the theorem is complete.\medskip

(2) Assume that
$\mathfrak B_{\Sigma(\kappa_1)}(Q)=\mathfrak B_{\Sigma(\kappa_2)}(Q)$.
We set for brevity $\varphi_i:=\Sigma(\kappa_i)$, $i=1,2$. Then, due to Theorem \ref{prop2.1}\,(4), for some $c_1,c_2\in (0,\infty)$,
\begin{equation}\label{equivalence}
c_1\cdot\varphi_2\le\varphi_1\le c_2\cdot\varphi_2.
\end{equation}
Let $x\in \mathfrak B^*_{\kappa_1}(Q)\setminus V\otimes B$, that is,
\[
\sum_{n=1}^\infty \kappa_1(n)\cdot\ln(E_n(x))=-\infty.
\]
By $S_{k,1}$ we denote the $k$th partial sum of the series. Since $\kappa_1(n)=\varphi_1(n)-\varphi_1(n+1)$ for all $n$, 
\[
\begin{array}{l}
\displaystyle
S_{k,1}=\sum_{n=1}^k \kappa_1(n)\cdot\ln(E_n(x))=\sum_{n=1}^k (\varphi_1(n)-\varphi_1(n+1))\cdot\ln(E_n(x))\\
\\
\displaystyle =\sum_{n=2}^{k}\varphi_1(n)\cdot\ln\left(\frac{E_{n}(x)}{E_{n-1}(x)}\right)+\varphi_1(1)\cdot\ln(E_1(x))-\varphi_1(k+1)\cdot\ln(E_k(x)).
\end{array}
\]
We consider two cases.

(a)
\[
\varliminf_{k\rightarrow\infty}\varphi_1(k+1)\cdot\ln(E_k(x))=-\infty.
\]
Since $\varphi_1(k+1)\le\varphi_1(k)$, this implies that
\[
0=\varliminf_{k\rightarrow\infty}(E_k(x))^{\varphi_1(k+1)}\ge \varliminf_{k\rightarrow\infty}(E_k(x))^{\varphi_1(k)}.
\]
That is, $x\in\mathfrak B_{\varphi_1}(Q)=\mathfrak B_{\varphi_2}(Q)$.
Hence, due to part (1) of the theorem, $x\in \mathfrak{B}^*_{\kappa_2}(Q)$.

(b)
\[
\varliminf_{k\rightarrow\infty}\varphi_1(k+1)\cdot\ln(E_k(x))>-\infty.
\]
Since $\lim_{k\rightarrow\infty} S_{k,1}=-\infty$, the latter condition, equivalence \eqref{equivalence} and the fact that $\{E_n\}_{n\in\mathbb N}$ is nonincreasing imply that
\[
-\infty=\lim_{k\rightarrow\infty}\left(\sum_{n=2}^{k}\varphi_1(n)\cdot\ln\left(\frac{E_{n}(x)}{E_{n-1}(x)}\right) \right)\ge \lim_{k\rightarrow\infty}\left(\sum_{n=2}^{k}c_2\cdot \varphi_2(n)\cdot\ln\left(\frac{E_{n}(x)}{E_{n-1}(x)}\right) \right)
\]
and
\[
\varliminf_{k\rightarrow\infty}c_1\cdot \varphi_2(k+1)\cdot\ln(E_k(x))>-\infty.
\]
Thus for
\[
\begin{array}{l}
\displaystyle
S_{k,2}=\sum_{n=1}^k \kappa_2(n)\cdot\ln(E_n(x))=\sum_{n=1}^k (\varphi_2(n)-\varphi_2(n+1))\cdot\ln(E_n(x))\\
\\
\displaystyle =\sum_{n=2}^{k}\varphi_2(n)\cdot\ln\left(\frac{E_{n}(x)}{E_{n-1}(x)}\right)+\varphi_2(1)\cdot\ln(E_1(x))-\varphi_2(k+1)\cdot\ln(E_k(x)),
\end{array}
\]
we get
\[
\sum_{n=1}^\infty \kappa_2(n)\cdot\ln(E_n(x))=\lim_{k\rightarrow\infty}S_{k,2}=-\infty,
\]
that is, $x\in \mathfrak B_{\kappa_2}^*(Q)$. This shows that $\mathfrak B_{\kappa_1}^*(Q)\subset \mathfrak B_{\kappa_2}^*(Q)$. By the symmetry of the above arguments, $\mathfrak B_{\kappa_2}^*(Q)\subset \mathfrak B_{\kappa_1}^*(Q)$, i.e., these two sets coincide.

Conversely, suppose $\mathfrak B_{\kappa_1}^*(Q)=\mathfrak B_{\kappa_2}^*(Q)=:U$. Then according to part (1) of the theorem
$\mathfrak B_{\Sigma(\kappa_i)}(Q)$, $i=1,2$, are maximal elements of $\mathscr B(Q)$ containing in $U$. Since by Corollary \ref{lat} $(\mathscr B(Q),\subseteq)$ is a lattice, these maximal elements must coincide, i.e. $\mathfrak B_{\Sigma(\kappa_1)}(Q)=\mathfrak B_{\Sigma(\kappa_2)}(Q)$. This completes the proof of part (2) of the theorem.\medskip

(3) Since each scale function $\varphi$ is equivalent to a decreasing one, say, $\tilde\varphi$,
for the weight function $\kappa(n):=\tilde\varphi(n)-\tilde\varphi(n+1)$, $n\in\mathbb N$, by \eqref{equ2.1} we have $\{\Sigma(\kappa)\}=\{\tilde\varphi\}=\{\varphi\}$. This shows that $\mathfrak B_{\Sigma(\kappa)}(Q)=\mathfrak B_\varphi(Q)$ and proves part (3).

\medskip

(4) Let $\varphi\in\Phi_\kappa$. We will show that for each $f\in \mathfrak{B}_\kappa^*(Q)$,
\[
\varliminf_{n\rightarrow\infty}\bigl(E_n(f)\bigr)^{\varphi(n)}=0.
\] 
This will imply the implication 
\[
\mathfrak{B}_\kappa^*(Q)\subset \bigcap_{\varphi\in\Phi_\kappa}\mathfrak{B}_\varphi(Q).
\]

Assume, on the contrary, that for some $x\in \mathfrak{B}_\kappa^*(Q)$ there exists a $\rho\in (0,1)$ such that
\[
E_n(x)\ge\rho^{\frac{1}{\varphi(n)}}\quad {\rm for\ all}\quad n\in\mathbb N.
\]
Then due to \eqref{beurling}
\[
-\infty=\sum_{n=1}^\infty \kappa(n)\cdot\ln (E_n(x))\ge \sum_{n=1}^\infty \frac{\kappa(n)\cdot \ln\rho}{\varphi(n)}>-\infty,
\]
a contradiction proving the required implication.

To prove the opposite implication, assume, on the contrary, that there exists
\[
x\in  \left(\bigcap_{\varphi\in\Phi_\kappa}\mathfrak{B}_\varphi(Q)\right)\setminus
\mathfrak{B}_\kappa^*(Q).
\]
Then, due to \eqref{beurling},
\[
\sum_{n=1}^\infty \kappa(n)\cdot\ln (E_n(x))>-\infty.
\]
This shows that the scale function $\varphi_x$ defined in the proof of part (2) of Theorem \ref{prop2.1} (see \eqref{phix}) belongs to the class $\Phi_\kappa$. A standard argument shows that there exists a decreasing to $0$ function $\mu:\mathbb N\rightarrow (0,\infty)$ such that $\mu\cdot\varphi_x$ still belongs to $\Phi_\kappa$. Thus we must have $x\in\mathfrak{B}_{\mu\cdot\varphi_x}(Q)$, that is,
\[
\varliminf_{n\rightarrow\infty}(E_n(x))^{\mu(n)\cdot\varphi(n)}< 1.
\]
However, by the definition of $\varphi_x$ (because $x\not\in V\otimes B$),
\[
\varliminf_{n\rightarrow\infty}(E_n(x))^{\mu(n)\cdot\varphi(n)}=\lim_{n\rightarrow\infty}\left(\frac{1}{e}\right)^{\mu(n)}=1,
\]
a contradiction showing that
\[
\mathfrak{B}_\kappa^*(Q)=\bigcap_{\varphi\in\Phi_\kappa}\mathfrak{B}_\varphi(Q).
\]

The proof of the theorem is complete.
\end{proof}

\subsection{Proof of Corollary \ref{cor2.4}}
\begin{proof}
(1) Since $\mathfrak{B}^*_{\kappa}(Q)\supset\mathfrak{B}_{\Sigma(\kappa)}(Q)$, $Q=((A,V);B;\alpha)\in\mathscr C$, and the latter set is comeager in $A\hat\otimes_\alpha B$, the former one is comeager in $A\hat\otimes_\alpha B$ as well.
\smallskip

\noindent (2) Since, due to Theorem \ref{prop2.2}\,(3), for each scale function $\varphi$ there exists a weight function $\kappa$ such that $\mathfrak{B}_{\Sigma(\kappa)}(Q)=\mathfrak{B}_{\varphi}(Q)$, Theorem \ref{prop2.1}\,(2) implies that
\[
A\hat\otimes_\alpha B\supseteq\bigcup\mathfrak B^*_\kappa(Q)\supseteq\bigcup\mathfrak B_\varphi(Q)=A\hat\otimes_\alpha B,
\]
where the unions are taken over all weight and scale functions. Thus, the union of all classes $\mathfrak B^*_\kappa(Q)$ coincides with $A\hat\otimes_\alpha B$.

Further, for a scale function $\varphi$ consider the weight function
\[
\kappa_{\varphi}(n):=\frac{\varphi(n)}{n^2},\quad n\in\mathbb N.
\]
Then
\[
\sum_{n=1}^\infty \frac{\kappa_\varphi(n)}{\varphi(n)}=\sum_{n=1}^\infty\frac{1}{n^2}<\infty.
\]
Hence, $\varphi\in \Phi_{\kappa_\varphi}$, see Theorem \ref{prop2.2}\,(4). Due to this theorem, 
\[
\mathfrak B^*_{\kappa_\varphi}(Q)\subset \mathfrak B_\varphi(Q).
\]
This and Theorem \ref{prop2.1}\,(2) imply that
\[
V\otimes B\subseteq\bigcap_{w\in\mathcal W}\mathfrak B^*_w(Q)\subseteq\bigcap_{s\in\mathcal S}\mathfrak B_s(Q)=V\otimes B.
\]
Thus, the intersection of all classes $\mathfrak B^*_\kappa(Q)$ coincides with $V\otimes B$.

Finally, $A\hat\otimes_\alpha B\setminus\cup_{\kappa\in K}\mathfrak{B}^*_\kappa(Q)\ne\emptyset$ for any countable subset $K$ of weight functions because due to Theorem \ref{prop2.2}\,(4) each $\mathfrak{B}^*_\kappa(Q)$ is a subset of some
$\mathfrak{B}_\varphi(Q)$ and a similar property is valid for the second Bernstein classes by Theorem \ref{prop2.1}\,(2).
\smallskip

\noindent (3) Let $K$ be a countable subset of weight functions. Then due to Theorem \ref{prop2.1}\,(3) there exists some scale function $\varphi_{lb}$
\[
\bigcap_{\kappa\in K}\mathfrak B^*_\kappa(Q)\supseteq \bigcap_{\kappa\in K}\mathfrak B_{\Sigma(\kappa)}(Q)\supseteq\mathfrak B_{\varphi_{lb}}(Q).
\]
Hence, see the argument in the proof of part (2) above,
\[
\bigcap_{\kappa\in K}\mathfrak B^*_\kappa(Q)\supseteq \mathfrak B^*_{\kappa_{\varphi_{lb}}}(Q).
\]
Similarly, for each $\kappa\in K$ choose some $\varphi_\kappa\in\Phi_\kappa$ (see Theorem \ref{prop2.2}\,(4)). Then due to Theorem \ref{prop2.1}\,(3) there exists some scale function $\varphi_{ub}$ such that
\[
\bigcup_{\kappa\in K}\mathfrak B^*_\kappa(Q)\subseteq \bigcup_{\kappa\in K}\mathfrak B_{\varphi_\kappa}(Q)\subseteq \mathfrak B_{\varphi_{ub}}(Q).
\]
Also, due to Theorem \ref{prop2.2}\,(3), there exists some $\kappa_{ub}$ such that 
\[
\mathfrak B_{\varphi_{ub}}(Q)\subseteq \mathfrak B^*_{\kappa_{ub}}(Q).
\]
Hence,
\[
\bigcup_{\kappa\in K}\mathfrak B^*_\kappa(Q)\subseteq  \mathfrak B^*_{\kappa_{ub}}(Q).
\]

This completes the proof of analogs of properties (1)--(3) of Theorem \ref{prop2.1} for the first Bernstein classes.
\end{proof}

\subsection{Proof of Proposition \ref{prop2.5}}
\begin{proof}
As in the proof of Corollary \ref{lat} we get
\begin{equation}\label{e3.6}
\begin{array}{l}
\displaystyle
\mathfrak B^*_{\kappa_1}(Q)\cup \mathfrak B^*_{\kappa_2}(Q)\subseteq \mathfrak B^*_{\kappa_1}(Q)\vee \mathfrak B^*_{\kappa_2}(Q);\\
\\
\displaystyle
 \mathfrak B^*_{\kappa_1}(Q)\vee \mathfrak B^*_{\kappa_2}(Q)\subseteq \mathfrak B^*_{\max(\kappa_1,\kappa_2)}(Q).
\end{array}
\end{equation}
Also, if $x\not\in \mathfrak B^*_{\kappa_1}(Q)\cup \mathfrak B^*_{\kappa_2}(Q)$, then for $i=1,2$,
\[
\sum_{n=1}^\infty \kappa_i(n)\cdot\ln(E_n(x))>-\infty.
\]
Adding these inequalities term by term (which is possible because the series converge absolutely) we obtain
\[
\sum_{n=1}^\infty (\kappa_1(n)+\kappa_2(n))\cdot\ln(E_n(x))>-\infty.
\]
Hence, $x\not\in \mathfrak B^*_{\kappa_1+\kappa_2}(Q)=\mathfrak B^*_{\max(\kappa_1,\kappa_2)}(Q)$. This shows that
\begin{equation}\label{e3.7}
\mathfrak B^*_{\max(\kappa_1,\kappa_2)}(Q)\subseteq \mathfrak B^*_{\kappa_1}(Q)\cup \mathfrak B^*_{\kappa_2}(Q).
\end{equation}

Equations \eqref{e3.6}, \eqref{e3.7} imply the required statement.
\end{proof}

\subsection{Proof of Corollary \ref{cor2.10}}
\begin{proof}
Map $\Sigma_*:(\mathscr{B}^*(Q),\cup,\subseteq)\rightarrow  (\mathscr B(Q),\cup,\subseteq)$ is given by the formula
\[
\Sigma_*(U):=\mathfrak B_{\Sigma(\kappa)}(Q),\qquad U=\mathfrak B^*_\kappa(Q)\in \mathscr{B}^*(Q).
\]
It is well defined due to Theorem \ref{prop2.2}\,(1). It is bijective due to Theorem \ref{prop2.2}\,(2),(3). Since map $\Sigma$ is additive (see \eqref{equ2.1}), $\Sigma_*$ is the homomorphism of abelian semigroups.

Finally, $\Sigma_*$ preserves partial orders because of the maximality property of $\mathfrak{B}_{\Sigma(\kappa)}(Q)$ in $\mathfrak{B}^*_{\kappa}(Q)$ proved in Theorem \ref{prop2.2}\,(1). Indeed, if 
$\mathfrak{B}_{\kappa_1}^*(Q)\subseteq \mathfrak{B}_{\kappa_2}^*(Q)$, then $\mathfrak{B}_{\Sigma(\kappa_1)}(Q)\subseteq  \mathfrak{B}_{\kappa_2}^*(Q)$ and therefore by that property $\mathfrak{B}_{\Sigma(\kappa_1)}(Q)\subseteq \mathfrak{B}_{\Sigma(\kappa_2)}(Q)$. If, in addition, $\mathfrak{B}_{\kappa_1}^*(Q)\neq \mathfrak{B}_{\kappa_2}^*(Q)$, then $\mathfrak{B}_{\Sigma(\kappa_1)}(Q)\neq  \mathfrak{B}_{\Sigma(\kappa_2)}(Q)$ by Theorem \ref{prop2.2}\,(2), as required.
\end{proof}

\sect{Proofs of Theorems \ref{te1}, \ref{mazur2} and \ref{mar2}}
\subsection{Proof of Theorem \ref{te1}}
\begin{proof}
We prove part (2) only; the proof of part (1) is similar. 

The proof is a simple corollary of Theorem \ref{prop2.1}\,(1).  Indeed, since  $(A\hat{\otimes}_\alpha B)^{-1}$ is an open subset of $A\hat\otimes_\alpha B$ and $\mathfrak B_{\varphi}(Q)$, $Q=((A,V);B;\alpha)$, is comeager in  $A\hat\otimes_\alpha B$,  set
$S:=\mathfrak B_{\varphi}(Q)\cap (A\hat{\otimes}_\alpha B)^{-1}$  is comeager in $(A\hat{\otimes}_\alpha B)^{-1}$.  Next, since map $^{-1}: (A\hat{\otimes}_\alpha B)^{-1}\rightarrow (A\hat{\otimes}_\alpha B)^{-1}$, $g\mapsto g^{-1}$, is a homeomorphism, set $S':=\{g\in (A\hat{\otimes}_\alpha B)^{-1}\, :\, g^{-1}\in S\}$ is comeager in $(A\hat{\otimes}_\alpha B)^{-1}$ . Hence, $S\cap S'$ is comeager in $(A\hat{\otimes}_\alpha B)^{-1}$ as well. Similarly, for each $g\in (A\hat{\otimes}_\alpha B)^{-1}$ set $g\cdot (S\cap S')$ is comeager in $(A\hat{\otimes}_\alpha B)^{-1}$. Therefore,
$\bigl(g\cdot (S\cap S')\bigr)\cap (S\cap S')$ is comeager in $(A\hat{\otimes}_\alpha B)^{-1}$ also. This implies the required statement.
\end{proof}
\subsection{Proof of Theorem \ref{mazur2}}
\begin{proof}
First, we will prove the theorem for $\mathbb F=\mathbb C$. In this case, the hypotheses imply that $U_\mathbb C\subset\mathbb C^N$ is a Stein domain and $X_\mathbb C$ is its Stein submanifold.  Therefore there exists a holomorphic retraction $r:N\rightarrow X_\mathbb C$ of an open neighbourhood $N\subset U_\mathbb C$ of $X_\mathbb C$ (see, e.g., \cite{GR} for basic results of the theory of Stein spaces).
Also, $X_\mathbb C$ is defined as the set of common zeros of a family of holomorphic functions on $U_\mathbb C$. Then, since $U_\mathbb C$ admits an exhaustion by compact polynomially convex subsets, the Runge approximation theorem implies that $X_\mathbb C$
admits an exhaustion by compact polynomially convex subsets as well. In particular, for each bounded subset of $X_\mathbb C$ its polynomially convex hull belongs to $X_\mathbb C$.  

Let $O\Subset X_\mathbb C$ be an open  bounded subset and $\widehat O$ its polynomially convex hull. Then there exists a {\em Weil polynomial polyhedron} 
\[
W=\bigl\{z\in\mathbb C^N\, :\, \max_{1\le i\le k}|p_i(z)|<1\ {\rm all}\ p_i\in\mathcal P(\mathbb C^N)\bigr\}
\]
such that ${\rm cl}(W)\Subset N$ and $\widehat O\subset W$. 

Further, since $W$ is open, set $W(A)$ of maps $g: M_A\rightarrow \mathbb C^N$ whose coordinate functions are in $A\, (:=A_\mathbb C)$ and images are in $W$ is an open subset of $A\otimes\mathbb C^N$ (we use here that the Gelfand transform is a nonincreasing norm morphism of algebras). In particular, due to the argument of the proof of Theorem \ref{te1} (cf. \cite{Ma}), sets $W(A)\cap\mathfrak{B}_{\varphi}((A,V);\mathbb C^N)$ are comeager in $W(A)$ for all scale functions $\varphi$. 

Next, by $V_i'\subset C(M_A;r(W))$ we denote set $\{r\circ g\, :\,  g\in (V_i\otimes\mathbb C^N)\cap W(A)\}$. Using functional calculus for commutative Banach algebras (see, e.g., \cite[Ch.\,3.4]{G}), one obtains that $V_i'\subset X_\mathbb C(A)$.
Then maps from $V':=\cup_i V_i'$ are dense in $O(A)\, (:= X_\mathbb C(A)\cap C(M_A;O))$. 
\begin{Lm}\label{lem1}
There exist numbers $t\in (0,1)$ and $n_*\in\mathbb N$ such that $E_{n\cdot j}(h)\le t^n$ for all $h\in V_j'$, $j\in\mathbb N$, and $n\ge n_*$.
\end{Lm}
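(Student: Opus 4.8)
The plan is to approximate the components of the holomorphic retraction $r$ by polynomials on the polyhedron with a \emph{geometric} rate, turn these polynomials into elements of the filtered algebra by substituting $g$, and transfer the resulting uniform‑norm estimates into $\|\cdot\|_A$‑estimates via the holomorphic functional calculus. First I would enlarge $W$ to a bounded Weil polynomial polyhedron $W_1=\{\max_{1\le i\le k'}|p_i'|<1\}$ (keeping the $p_i$ among the $p_i'$ and adjoining the coordinate functions if needed to guarantee boundedness) with $\overline W\subset W_1$ and $\overline{W_1}\Subset N$. The map $z\mapsto(z,p_1'(z),\dots,p_{k'}'(z))$ realizes $W_1$ as a closed complex submanifold of a bounded polydisc $\Delta\subset\mathbb C^N\times\mathbb C^{k'}$; since $\Delta$ is Stein, each $r_l$ extends to $\tilde r_l\in\mathcal O(\Delta)$. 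Taylor‑expanding $\tilde r_l$ about the origin and substituting $w=p'(z)$ gives
\[
r_l(z)=\sum_{\alpha,\beta}c^{(l)}_{\alpha,\beta}\,z^{\alpha}\,p'(z)^{\beta},
\]
and the Cauchy estimates on $\Delta$ give $|c^{(l)}_{\alpha,\beta}|\cdot\sup_{\overline W}|z^{\alpha}p'(z)^{\beta}|\le M\sigma^{|\alpha|+|\beta|}$ for constants $M\ge1$, $\sigma\in(0,1)$ depending only on $W_1,\Delta,r$. Writing $D:=\max_i\deg p_i'$ and letting $q_{l,n}$ be the partial sum over $\{|\alpha|+D|\beta|\le n\}$, we get $\sup_{\overline W}|r_l-q_{l,n}|\le C\theta_0^{\,n}$ with $\theta_0\in(0,1)$.

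Next, fix $h=r\circ g\in V_j'$ with $g=(g_1,\dots,g_N)\in(V_j\otimes\mathbb C^N)\cap W(A)$. Since each $g_i\in V_j$ and $V$ is a filtered unital algebra, $g^{\alpha}p'(g)^{\beta}\in V_{j(|\alpha|+D|\beta|)}$, so $\tilde h:=(q_{1,n}(g),\dots,q_{N,n}(g))\in V_{nj}\otimes\mathbb C^N$ and $E_{nj}(h)\le\|h-\tilde h\|_{A\otimes\mathbb C^N}$. The joint spectrum of $(g,p'(g))$ is contained in the compact set $\{(z,p'(z)):z\in\overline W\}\subset\Delta$, so by the holomorphic functional calculus for commutative Banach algebras — here using that $A$ is semisimple, so elements with equal Gelfand transforms coincide — one has $r_l(g)=\tilde r_l(g,p'(g))=\sum_{\alpha,\beta}c^{(l)}_{\alpha,\beta}g^{\alpha}p'(g)^{\beta}$ as a series convergent in $A$, whence $h_l-\tilde h_l=\sum_{|\alpha|+D|\beta|>n}c^{(l)}_{\alpha,\beta}g^{\alpha}p'(g)^{\beta}$.

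It remains to estimate $\|g^{\alpha}p'(g)^{\beta}\|_A$, and this is where the main obstacle lies: $\|\cdot\|_A$ only dominates, and need not equal, the uniform norm on $M_A$, so the crude bound $\|g^{\alpha}p'(g)^{\beta}\|_A\le\|g_1\|_A^{\alpha_1}\cdots$ is worthless because $\|g_i\|_A$ is not controlled by the requirement $g(M_A)\subset W$. The point is that $g^{\alpha}p'(g)^{\beta}$ is the trace of a polynomial of bounded degree evaluated at $g$, hence lies in the \emph{finite‑dimensional} subspace $V_{j(|\alpha|+D|\beta|)}$ of $A$, on which $\|\cdot\|_A$ is equivalent to the uniform norm, while its uniform norm over $M_A$ is at most $(\sup_W|z|)^{|\alpha|}$ since $g(M_A)\subset W$ and $\sup_W|p_i'|\le1$. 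Combining this with the geometric decay of the $c^{(l)}_{\alpha,\beta}$ (using $|\alpha|+|\beta|>n/D$ on the tail and absorbing the polynomial counting factors and the growth of the norm‑equivalence constants) yields $\|h_l-\tilde h_l\|_A\le C'\theta_1^{\,n}$ for some $\theta_1\in(0,1)$, so that $E_{nj}(h)\le\sqrt N\,C'\theta_1^{\,n}\le t^{\,n}$ with, e.g., $t:=\theta_1^{1/2}$, for all $n\ge n_*$. The hard part, as indicated, is exactly this final transfer from the uniform norm to $\|\cdot\|_A$: one must keep the functional‑calculus estimate geometric and independent of the particular admissible $g$, and the mechanism for this is never leaving the graded finite‑dimensional pieces of $V$ together with the fact that the spectral radii of the $p_i'(g)$ are $\le1$.
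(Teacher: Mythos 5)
Your overall architecture is the same as the paper's: expand the retraction on a Weil polyhedron into a series of polynomials in $z$ and in the defining polynomials, truncate, substitute $g$ using the filtration $V_i\cdot V_j\subset V_{i+j}$, and convert a uniform-norm estimate into a $\|\cdot\|_A$-estimate. (The paper obtains the expansion from the Weil integral representation formula; your Oka-type extension to a polydisc plus Taylor series is a legitimate alternative for the uniform-norm part.) The gap is in the decisive last step. You bound $\|g^{\alpha}p'(g)^{\beta}\|_A$ by placing this element in $V_{j(|\alpha|+D|\beta|)}$, invoking the equivalence of $\|\cdot\|_A$ with the sup norm there, and then proposing to ``absorb the growth of the norm-equivalence constants.'' But the equivalence constant $C_m$ on $V_m$ is completely uncontrolled by the hypotheses: nothing prevents $C_m$ from growing faster than any geometric sequence, and your tail sum runs over all $m=|\alpha|+D|\beta|>n$, so the factors $C_{jm}$ can swamp the Cauchy decay $M\sigma^{m/D}$ and no bound of the form $t^{\,n}$ results. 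For the same reason your functional-calculus identity ``$r_l(g)=\sum c^{(l)}_{\alpha,\beta}g^{\alpha}p'(g)^{\beta}$ as a series convergent in $A$'' is not yet available: without those norm bounds one only knows that the series of Gelfand transforms converges uniformly on $M_A$, not that the series converges in $\|\cdot\|_A$.

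The paper's proof is organized precisely to avoid this. In the Weil expansion $r_i=\sum_\alpha p_{i\alpha}\cdot p_1^{\alpha_1}\cdots p_k^{\alpha_k}$ the coefficient polynomials $p_{i\alpha}$ have degree at most $N(d-1)$ \emph{uniformly in} $\alpha$, so the norm equivalence is used only once, on the single fixed finite-dimensional space $V_{d(N-1)j}$; the factors of unbounded degree are pure powers of the finitely many fixed elements $p_l\circ g$, and these are controlled by the spectral radius formula $\lim_{n}\|a^n\|_A^{1/n}=\sup_{M_A}|\hat a|$, which holds in any commutative Banach algebra and needs no norm equivalence, together with $\sup_{g(M_A)}|p_l|<1$. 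You allude at the very end to ``the spectral radii of the $p_i'(g)$ are $\le 1$,'' but this is never used in your estimate, and even if it were you would still have to control the powers $g^{\alpha}$ of the coordinate functions themselves, whose spectral radii are $\sup_{g(M_A)}|z_i|$ and need not be $\le 1$. To repair the argument you would have to replace the norm-equivalence step by spectral-radius bounds for each of $g_1,\dots,g_N,p_1'(g),\dots,p_{k'}'(g)$ and check that the resulting geometric rates are beaten by the Cauchy decay $\sigma^{|\alpha|+|\beta|}$ coming from a sufficiently large polydisc --- at which point you have essentially reconstructed the paper's mechanism.
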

\begin{proof}
Let $d:=\max_{1\le i\le k}{\rm deg}(p_i)$.
As follows from the Weil integral  representation formula \cite{W} for the coordinate functions of the map 
$r|_{W}=(r_1,\dots, r_N):W\rightarrow\mathbb C^N$,  
\begin{equation}\label{weil}
r_i(z)=\sum_{\alpha}p_{i\alpha}(z)\cdot p_1^{\alpha_1}(z)\cdots p_k^{\alpha_k}(z),\quad z\in W,\quad \alpha=(\alpha_1,\dots,\alpha_k)\in\mathbb Z_+^k,
\end{equation}
where all $p_{i\alpha}\in\mathcal P(\mathbb C^N)$,   $ {\rm deg}(p_{i\alpha})\le N\cdot (d-1)$, $\sup_{i,\alpha}\bigl(\sup_W |p_{i\alpha}|\bigr)\le C$,
and the series converges uniformly on compact subsets of $W$. \smallskip

If $g=(g_1,\dots, g_N)\in (V_j\otimes\mathbb C^N)\cap W(A)$, then (recall that $V\subset A$ is a filtered subalgebra)  $p_l\circ g\in V_{d\cdot j}$, $1\le l\le k$, so that $(p_1\circ g)^{\alpha_1}\cdots (p_k\circ g)^{\alpha_k}\in V_{|\alpha|\cdot d\cdot j}$ (here $|\alpha|:=\alpha_1+\cdots +\alpha_k)$. Also, $p_{i\alpha}\circ g\in V_{d\cdot (N-1)\cdot j}$ for all $i,\alpha$.  Note that $g(M_A)$ is a compact subset of $W$; hence,
\begin{equation}\label{maximum}
\max_{1\le l\le k}\sup_{g(M_A)}|p_l |<1.
\end{equation}
Since the {\em spectral radius} of $p_l\circ g\in A$ is
\[
\lim_{n\rightarrow\infty}\sqrt[n]{\|(p_l\circ g)^n\|_A}=\sup_{g(M_A)}|p_l |,
\]
\eqref{maximum} implies that there exist constants $c>0$ and $\rho\in (0,1)$ such that for all $n\in\mathbb N$, $1\le l\le k$,
\begin{equation}\label{crho}
\|(p_l\circ g)^n\|_A\le c\rho^n.
\end{equation}
Also, since all norms on the finite-dimensional Banach space $V_{d\cdot (N-1)\cdot j}$ are equivalent, there exists some $C'>0$ such that
\begin{equation}\label{equiv}
\sup_{i,\alpha}\|p_{i\alpha}\circ g\|_A\leq C'\sup_{i,\alpha}\left(\sup_{M_A}|p_{i,\alpha}\circ g|\right)\le C'\sup_{i,\alpha}\left(\sup_W |p_{i\alpha}|\right)\le C'\cdot C.
\end{equation}
Inequalities \eqref{crho} and  \eqref{equiv} imply that series
\[
\sum_{\alpha}(p_{i\alpha}\circ g)\cdot (p_1\circ g)^{\alpha_1}\cdots (p_k\circ g)^{\alpha_k}
\]
converges absolutely in $A$ to element $r_i\circ g=:h_i$, $1\le i\le N$. Moreover, for
$h=(h_1,\dots, h_N):=r\circ g\in V_j'$
there exist 
numbers $t\in (0,1)$ and $n_*\in\mathbb N$ such that for all $n\ge n_*$,
\[
\begin{array}{l}
\displaystyle
E_{n\cdot j}(h):=\sqrt{ \sum_{i=1}^N \bigl(E_{n\cdot j}(h_i)\bigr)^2}\le\sum_{i=1}^N\bigl\| h_i-\sum_{|\alpha|=0}^{\lfloor\frac nd\rfloor- (N-1)} (p_{i\alpha}\circ g) \cdot  (p_1\circ g)^{\alpha_1}\cdots (p_k\circ g)^{\alpha_k}  \bigr\|_A \\
\\
\displaystyle \le \sum_{i=1}^N\sum_{|\alpha|=\lfloor\frac nd\rfloor- (N-2)}^\infty  C'\cdot C\cdot c^k\cdot\rho^{|\alpha|}= \sum_{i=1}^N\sum_{l=\lfloor\frac nd\rfloor- (N-2)}^\infty C'\cdot C\cdot c^k \cdot {l \choose k-1 }\cdot \rho^{l}<t^n,
\end{array}
\]
as required.
\end{proof}

In the subsequent proof we use the following result.

\begin{Lm}\label{lem3.2}
Given a nonincreasing function $\varphi:\mathbb N\rightarrow (0,\infty)$ such that $\lim_{n\rightarrow\infty}\varphi(n)=0$ and $\lim_{n\rightarrow\infty}n\cdot\varphi(n)=\infty$ there exists a nondecreasing function $\xi:\mathbb N\rightarrow \mathbb N$, $\lim_{n\rightarrow\infty}\xi(n)=\infty$, such that for all sufficiently large $n$
\[
\varphi(1)\le \xi(n)\cdot\varphi(n\cdot\xi(n))\le 2\varphi(1).
\]
\end{Lm}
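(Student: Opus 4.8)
The plan is to take for $\xi(n)$ the first integer $k$ at which the sequence $g_n(k):=k\cdot\varphi(nk)$ first reaches the level $\varphi(1)$; that is, I would set
\[
\xi(n):=\min\bigl\{k\in\mathbb N\;:\;k\cdot\varphi(nk)\ge\varphi(1)\bigr\}.
\]
Then the three things to check are: $\xi$ is well defined with $\xi(n)\to\infty$; $\xi$ is nondecreasing; and $\varphi(1)\le\xi(n)\cdot\varphi(n\cdot\xi(n))\le 2\varphi(1)$ for large $n$. Each of these follows from elementary monotonicity considerations, with no real computation.

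First I would note that the defining set is nonempty, so $\xi(n)\in\mathbb N$ is legitimate: writing $g_n(k)=\tfrac1n\,(nk)\varphi(nk)$ and using the hypothesis $m\varphi(m)\to\infty$, we get $g_n(k)\to\infty$ as $k\to\infty$ for each fixed $n$. For $\xi(n)\to\infty$, fix $K\in\mathbb N$; for $1\le j\le K$ one has $g_n(j)=j\varphi(nj)\le K\varphi(n)$ since $\varphi$ is nonincreasing and $nj\ge n$, and because $\varphi(n)\to0$ this is $<\varphi(1)$ for all large $n$, which forces $\xi(n)>K$.

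The step requiring the right viewpoint — and the one I would flag as the only subtlety — is the monotonicity of $\xi$. The point is that $g_n(\cdot)$ need not be monotone in $k$ (a genuine possibility), but $g_n(k)$ is monotone in $n$: since $\varphi$ is nonincreasing, for fixed $k$ the value $g_n(k)=k\varphi(nk)$ is nonincreasing in $n$. Hence the sets $\{k:g_n(k)\ge\varphi(1)\}$ shrink as $n$ increases, so their minima can only increase, i.e.\ $\xi(n)\le\xi(n+1)$.

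For the two-sided estimate, the lower bound $\varphi(1)\le\xi(n)\varphi(n\xi(n))=g_n(\xi(n))$ is exactly the defining property of $\xi(n)$. For the upper bound I would use that the upward jumps of $g_n$ are small: $g_n(k+1)=(k+1)\varphi(n(k+1))\le(k+1)\varphi(nk)=g_n(k)+\varphi(nk)$. Once $n$ is large enough that $\varphi(n)<\varphi(1)$ we have $g_n(1)=\varphi(n)<\varphi(1)$, so $\xi(n)\ge2$, and minimality gives $g_n(\xi(n)-1)<\varphi(1)$; therefore
\[
g_n(\xi(n))\le g_n(\xi(n)-1)+\varphi\bigl(n(\xi(n)-1)\bigr)<\varphi(1)+\varphi(n)\le 2\varphi(1).
\]
Combined with the lower bound this is precisely the asserted inequality, valid for all sufficiently large $n$. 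I do not anticipate any genuine obstacle: the construction is essentially forced, and all verifications are one line apiece once monotonicity is phrased via the dependence on $n$ rather than on $k$.
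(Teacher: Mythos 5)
Your proposal is correct and is essentially the paper's proof: the paper defines $\xi(n)=\min\{m:\theta(nm)/n\ge 1\}$ with $\theta(m)=m\varphi(m)$ and $\varphi(1)$ normalized to $1$, which after unwinding is exactly your $\min\{k: k\varphi(nk)\ge\varphi(1)\}$, and the well-definedness, divergence, and monotonicity-in-$n$ arguments coincide. The only cosmetic difference is in the upper bound, where the paper deduces $\varphi(n\xi(n))<\tfrac{1}{\xi(n)-1}$ from minimality and concludes $\xi(n)\varphi(n\xi(n))\le\tfrac{\xi(n)}{\xi(n)-1}\le 2$, whereas you bound the jump $g_n(\xi(n))-g_n(\xi(n)-1)\le\varphi(n(\xi(n)-1))$; both yield the same factor $2$.
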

\begin{proof}
Without loss of generality we may assume that $\varphi(1)=1$.
Set $\theta(n):=n\cdot\varphi(n)$, $n\in\mathbb N$. By the definition, function $y\mapsto \frac{\theta(n\cdot y)}{n}$, $y\in\mathbb N$, tends to $\infty$ as $y\rightarrow\infty$ and $\frac{\theta(n\cdot 1)}{n}\le 1$. We define
\[
\xi(n):=\min\left\{m\in\mathbb N\, :\, \frac{\theta(n\cdot m)}{n}\ge 1\right\}.
\]
Then $\lim_{n\rightarrow\infty} \xi(n)=\infty$.  (For otherwise, $1\le \lim_{n\rightarrow\infty}\frac{\theta(n\cdot \xi(n))}{n}=0$, a contradiction.) Moreover, $\xi$ is a nondecreasing function. Indeed, since $\varphi$ is nonincreasing,
\[
1\le \frac{\theta\bigl((n+1)\cdot \xi(n+1)\bigr)}{n+1}\le \frac{\theta(n\cdot \xi(n+1))}{n}.
\]
Then $\xi(n)\le \xi(n+1)$ by the definition of $\xi(n)$. Finally, if $\xi(n)>1$, then
\[
\frac{1}{\xi(n)}\le \frac{\theta(n\cdot \xi(n))}{n\cdot \xi(n)}\le \frac{\theta(n\cdot (\xi(n)-1))}{n\cdot (\xi(n)-1)}<\frac{1}{\xi(n)-1}.
\]
This implies
\[
1\le \xi(n)\cdot\varphi(n\cdot\xi(n))\le \frac{\xi(n)}{\xi(n)-1}\le 2,
\]
as required.
\end{proof}

Further, let $\psi:\mathbb N\rightarrow (0,\infty)$ be a nonincreasing function such that $\lim_{n\rightarrow\infty}\psi(n)=0$.
By $\mathfrak{B}_{\psi}((A, V');X_\mathbb C)$ we denote the set of  
$g\in X_\mathbb C(A)$ such that  
\[
\varliminf_{n\rightarrow\infty}(E_n'(g))^{\psi(n)}<1,
\]
where $E_n'(g):=\inf_{h\in V_n'}\|g-h\|_{A\otimes\mathbb C^N}$ is the distance from $g\in X_\mathbb C(A)$ to $V_n'$.

\noindent As in the proof of Theorem \ref{te1} (cf. \cite{Ma}) one shows that $\mathfrak{B}_{\psi}((A, V');X_\mathbb C)$ is comeager in $X_\mathbb C(A)$ (recall that the latter is a closed subset of  Banach space $A\otimes\mathbb C^N$, and therefore is complete in the induced metric).

By definition, for $g\in \mathfrak{B}_{\psi}((A,V');X_\mathbb C)\cap C(M_A;O)$ there exist $\{n_l\}\subset \mathbb N$ and $g_{l}\in V_{n_l}'\cap C(M_A;O)$ such that $\|g-g_l\|_{A\otimes\mathbb C^N}\le c^{\frac{1}{\psi(n_l)}}$ for some $c\in (0,1)$. Then Lemma \ref{lem1} yields
\begin{equation}\label{compos}
E_{n\cdot n_l}(g)\le \|g-g_l\|_{A\otimes\mathbb C^N}+ E_{n\cdot n_l}(g_l)\le c^{\frac{1}{\psi(n_l)}}+t ^n\quad {\rm for\ all}\quad n\ge n_*.
\end{equation}

Taking here $\psi:=\frac{1}{\xi}$ with $\xi$ as in Lemma \ref{lem3.2} and
$n:=\xi(n_l)$ (for all sufficiently large $l$ such that 
$\xi(n_l)\ge n_*$) we obtain from this lemma for some $\tilde c\in (0,1)$,
\[
E_{n_l\cdot\xi(n_l)}(g)\le 2\cdot\bigl(\max(c,t)\bigr)^{\,\xi(n_l)}\le  2\cdot\bigl(\max(c,t)\bigr)^{\frac{\varphi(1)}{\varphi(n_l\cdot\xi(n_l))}}\le \tilde c^{\frac{1}{\varphi(n_l\cdot\xi(n_l))}}.
\]
The latter implies that
\[
\varliminf_{k\rightarrow\infty}\bigl(E_k(g)\bigr)^{\varphi(k)}\le \tilde c,
\]
that is, $g\in \mathfrak{B}_{\varphi}((A,V);\mathbb C^N)$. 

Thus, we have proved that $\mathfrak{B}_{\psi}((A, V');X_{\mathbb C})\cap C(M_A;O)\subset\mathfrak{B}_{\varphi} ((A,V);\mathbb C^N)\cap X_\mathbb C(A)$ for an arbitrary open $O\Subset X_{\mathbb C}$.

Now, choosing an exhaustion $O_1\subsetneq O_2\subsetneq\cdots$ of $X_{\mathbb C}$ by open bounded sets and taking into account that
\[
\mathfrak{B}_{\psi}((A, V');X_{\mathbb C})=\bigcup_{i=1}^\infty \mathfrak{B}_{\psi}((A, V');X_{\mathbb C})\cap C(M_A;O_i),
\]
we obtain that $\mathfrak{B}_{\psi}((A, V');X_{\mathbb C})\subset \mathfrak{B}_{\varphi} ((A,V);\mathbb C^N)\cap X_\mathbb C(A)$. 

Hence, set
$\mathfrak{B}_{\varphi} ((A,V);\mathbb C^N)\cap X_\mathbb C(A)$ is comeager in $X_{\mathbb C}(A)$. This completes the proof of part (a) of the theorem for $\mathbb F=\mathbb C$.\smallskip

To prove part (b),  we choose $\psi(n):=\frac{1}{2^n}$. Then \eqref{compos} with $n:=2^{n_l}$ implies, for some $\tilde c\in (0,1)$ and all sufficiently large $l$,
\begin{equation}\label{eq3.6}
E_{n_l\cdot 2^{n_l}}(g)\le \tilde c^{2^{n_l}}.
\end{equation}
Observe that for function 
$\tilde\varphi(n):=\frac{1+\ln n}{n}$, $n\in\mathbb N$,
\[
\frac{\ln 2}{2^{n_l}}\le \tilde\varphi(n_l\cdot 2^{n_l})\le \frac{1+\ln 2}{2^{n_l}}.
\]
From here and \eqref{eq3.6} we obtain, for $\bar c:=\tilde c^{\,\ln 2}$ and all sufficiently large $l$,
\[
E_{n_l\cdot 2^{n_l}}(g)\le \bar c^{\frac{1}{\tilde\varphi(n_l\cdot 2^{n_l})}}.
\]
This implies that $g\in \mathfrak{B}_{\tilde\varphi}((A,V);\mathbb C^N)$. In addition, let us prove that $g\in \mathfrak{B}^*((A,V);\mathbb C^N)$. Indeed, 
(\ref{compos})  leads to the inequality
\[
E_{n\cdot n_l}(g)\le 2\cdot C^{\min(2^{n_l},\,n)}, \quad {\rm where}\quad C:=\max(c,t).
\]
Thus, since $\{E_k(g)\}_{k\in\mathbb N}$ is a nonincreasing sequence and $\ln E_{n\cdot n_l}(g)<0$ for all sufficiently large $l$, for such $l$ and all  $1\le n\le 2^{n_l}$ we obtain
\[
\begin{array}{l}
\displaystyle
\sum_{i=0}^{n_l-1}\frac{\ln E_{n\cdot n_l+i}(g)}{(n\cdot n_l+i)^2}\le \ln E_{n\cdot n_l}(g)\cdot\sum_{i=0}^{n_l-1}\frac{1}{(n\cdot n_l+i)^2}\le \ln E_{n\cdot n_l}(g)\cdot\left(\frac{1}{n\cdot n_l}-\frac{1}{(n+1)\cdot n_l}\right)\\
\\
\displaystyle = \frac{\ln E_{n\cdot n_l}(g)}{n\cdot (n+1)\cdot n_l}\le \frac{\ln 2}{n\cdot (n+1)\cdot n_l}+\frac{\ln C}{(n+1)\cdot n_l}.
\end{array}
\]
Summing these inequalities over $n$ we get
\[
\begin{array}{l}
\displaystyle
\sum_{j=n_l}^{2^{n_l}\cdot n_l-1}\frac{\ln E_j(g)}{j^2}=\sum_{n=1}^{2^{n_l}-1}\sum_{i=0}^{n_l-1}\frac{\ln E_{n\cdot n_l+i}(g)}{(n\cdot n_l+i)^2}\le \sum_{n=1}^{2^{n_l}-1}\frac{\ln 2}{n\cdot (n+1)\cdot n_l}+\sum_{n=1}^{2^{n_l}-1}\frac{\ln C}{(n+1)\cdot n_l}\\
\\
\displaystyle
\le \frac{\ln 2}{n_l}+\ln 2\cdot\ln C.
\end{array}
\]
Hence,  there exists $L\in\mathbb N$ such that for all $l\ge L$ the right-hand side of the previous inequality does not exceed $\frac{\ln 2\cdot\ln C}{2}$. Passing to a subsequence, if necessary, we may assume that $2^{n_l}\cdot n_l<n_{l+1}$ for all $l$. Then we have
\[
\sum_{j=1}^\infty\frac{\ln E_j(g)}{j^2}\le \sum_{l=L}^\infty\sum_{j=n_l}^{2^{n_l}\cdot n_l-1}\frac{\ln E_j(g)}{j^2}\le \sum_{l=L}^\infty \frac{\ln 2\cdot\ln C}{2}=-\infty.
\] 
This shows that $g\in \mathfrak{B}^*((A,V);\mathbb C^N)$, as required.

Thus we have proved that  
\[
\mathfrak{B}_{\psi}((A, V');X_{\mathbb C})\subset \bigl(\mathfrak{B}_{\tilde\varphi} ((A,V);\mathbb C^N)\cap \mathfrak{B}^*((A,V);\mathbb C^N)\bigr)\cap X_\mathbb C(A).
\]
In particular, the latter set is comeager in $X_\mathbb C(A)$ which completes the proof of part (b) for $\mathbb F=\mathbb C$.\smallskip

Now, let us discuss the case of $\mathbb F=\mathbb R$. Here $X_\mathbb R$ is a closed analytic submanifold of the domain  $U_\mathbb R\subset\mathbb R^N$.
Then there exists an open neighbourhood $N\subset U_\mathbb R$ of $X_\mathbb R$ and an analytic retraction $r: N\rightarrow X_\mathbb R$. Due to analyticity, $r$ admits an extension $r': N'\rightarrow\mathbb C^N$, where $N'\subset\mathbb C^N$ is an open neighbourhood of $X_\mathbb R$ in $\mathbb C^N$ containing $N$ and $r'$ is holomorphic on $N'$. Further, since $X_\mathbb R\subset\mathbb R^N$, each compact subset of $X_\mathbb R$ is polynomially convex in $\mathbb C^N$. Hence, for an open bounded subset $O$ of $X_\mathbb R$ its polynomially convex hull belongs to $X_\mathbb R$ and  therefore there exists a Weil polynomial polyhedron $W$ such that ${\rm cl}(W)\Subset N'$ and $\widehat O\subset W$.
The rest of the proof repeats literally the corresponding proof for the case $\mathbb F=\mathbb C$ (the only difference is that now, in the proof of the analog of Lemma \ref{lem1}, one applies the Weil integral representation formula to coordinate functions of map $r'|_W$).
\end{proof}

\subsection{Proof of Theorem \ref{mar2}}
\begin{proof}
It is sufficient to show that $X_\mathbb F(A_\mathbb F)$ is closed under group operations on $C(M_A;X_\mathbb F)$. Then the rest of the proof is based on Theorem \ref{mazur2} and repeats arguments of the proof of Theorem \ref{te1}. 

We prove the required result for $\mathbb F=\mathbb C$. For $\mathbb F=\mathbb R$ the proof is similar (cf. the end of the proof of Theorem \ref{mazur2}). 

We retain notation of the proof of Theorem \ref{mazur2}. So, let $r:N\rightarrow X_\mathbb C$ be the  holomorphic retraction of the open neighbourhood $N\subset U_\mathbb C$.  By definition, multiplication on $X_\mathbb C$ is given by a holomorphic map $S: X_\mathbb C\times X_\mathbb C \rightarrow X_\mathbb C$.  Consider holomorphic map $\tilde S: N\times N\rightarrow \mathbb C^N$, $\tilde S(z,w):=S(r(z), r(w))$. If $g,h\in X_\mathbb C(A)$, then 
$(g,h):M_A\rightarrow  N\times N\subset \mathbb C^{2N}$ is a continuous map whose coordinate functions belong to $A$. Since for each compact subset of $X_\mathbb C\times X_\mathbb C$ its polynomially convex hull in $\mathbb C^{2N}$ belongs to $X_\mathbb C\times X_\mathbb C$, applying functional calculus for $A$ (see, e.g., \cite[Ch.\,3.4]{G}) to $\tilde S$ and $g,h$ we obtain
that $\tilde S(g,h)=S(g,w)\in X_\mathbb C(A)$ for all $g,h\in X_\mathbb C(A)$. Thus, $X_\mathbb C(A)$ is closed under multiplication on  $C(M_A;X_\mathbb C)$. A similar argument shows that $X_\mathbb C(A)$ is closed under the operation of taking the inverse. 
\end{proof}

\sect{Proofs of Results of Subsection~2.3}
\subsection{Proof of Theorem \ref{te1.9}}
\begin{proof}
By definition and since $\|\cdot\|_A\ge\|\cdot\|_{C(M)}$ and the injective crossnorm $\epsilon$ does not exceed the uniform crossnorm $\alpha$ on $A\hat\otimes_\alpha B$, for an element $f\in\mathfrak{B}_\varphi((A,V);B;\alpha)$ there exist a number $\rho\in (0,1)$ and a sequence $\{f_{n_i}\}_{i\in\mathbb N}\subset V$, $f_{n_i}\in V_i\otimes B$, such that 
\[
\gamma_{n_i}:=\sup_{s\in S}\|f(s)-f_{n_i}(s)\|_B\le  \|f-f_{n_i}\|_{A\hat\otimes_\alpha B}\le \rho^{\frac{1}{\varphi(n_i)}}.
\]
Without loss of generality we may assume that
\begin{equation}\label{eq3.1}
\max\left(\sup_{m\in M}\|f_{n_i}(m)\|_B, \sup_{m\in M}\|f(m)\|_B\right)=1.
\end{equation}
Next, according to \eqref{eq1.5}, 
\[
c_i:=Cov(S,\gamma_{n_i})\le C\cdot\left(\frac{1}{\gamma_{n_i}}\right)^k.
\]
Let $\bigl(B_{ij}\bigr)_{1\le j\le c_i}$, $B_{ij}:=B_{\gamma_{n_i}}(m_{ij})\subset M$, be an open cover of $S$ corresponding to $c_i$. Then, due to \eqref{eq3.1} and by the definition of Markov constants, for each $j$ and all $m_1,m_2\in B_{ij}$,
\begin{equation}\label{eq3.2}
\begin{array}{l}
\displaystyle
\|f_{n_i}(m_1)-f_{n_i}(m_2)\|_B=\sup_{b^*\in B^*,\, \|b^*\|_{B^*}\le 1}|b^*(f_{n_i}(m_1))-b^*(f_{n_i}(m_2))|\\
\\
\displaystyle
\le \mathcal M_{V_{n_i}}(S)\cdot \sup_{m\in M}\|f_{n_i}(m)\|_B\cdot d(m_1,m_2)\le 2\mathcal M_{V_{n_i}}(S)\cdot\gamma_{n_i} .
\end{array}
\end{equation}
In particular, since 
\[
{\rm dist}\bigl(\Gamma_f(S),\Gamma_{f_{n_i}}(S)\bigr):=\inf\left\{d_{M\times B}(m_1,m_2)\, :\, m_1\in \Gamma_f(S),\, m_2\in \Gamma_{f_{n_i}}(S)\right\}
\le \gamma_{n_i},
\]
inequality \eqref{eq3.2} implies that $\Gamma_f(S)$ can be covered by sets $\{W_{ij}\}_{1\le j\le c_i}$, $W_{ij}:=\bigl\{(m,b)\in B_{ij}\times B\, :\, \|b-f_{n_i}(m)\|_B\le \gamma_{n_i} \bigr\}$,
of diameter at most $2\mathcal M_{V_{n_i}}(S)\cdot\gamma_{n_i}$.
Thus, for $\psi_k(t):=t^k\cdot\psi(t)$, $t\in [0,\infty)$, we have
\begin{equation}\label{long}
\begin{array}{lr}
\displaystyle
\sum_{j=1}^{c_i} \psi_k\bigl({\rm diam}(W_{ij})\bigr)\\
\medskip
\displaystyle \le c_i\cdot\psi_k(2\mathcal M_{V_{n_i}}(S)\cdot\gamma_{n_i})\le C\cdot\left(\frac{1}{\gamma_{n_i}}\right)^k\cdot (2\mathcal M_{V_{n_i}}(S)\cdot\gamma_{n_i})^k\cdot\psi(2\mathcal M_{V_{n_i}}(S)\cdot\gamma_{n_i})
\\
\medskip
\displaystyle 
\le C\cdot (2\mathcal M_{V_{n_i}}(S))^k\cdot \psi\left(2\mathcal M_{V_n}(S)\cdot\rho^{\frac{1}{\varphi(n)}}\right).
\end{array}
\end{equation}
From here and hypothesis \eqref{eq1.6} we obtain
\begin{equation}\label{eq3.3}
\varliminf_{i\rightarrow\infty}\sum_{j=1}^{c_i} \psi_k\bigl({\rm diam}(W_{ij})\bigr)\le C\cdot 2^k\cdot L(\rho')\quad {\rm for\ all}\quad \rho'\in (\rho,1).
\end{equation} 
Recall that 
\[
\begin{array}{l}
\displaystyle
\mathcal H^{\psi_k}_{M\times B}\bigl(\Gamma_f(S)\bigr)=\lim_{\delta\rightarrow 0^+} \mathcal H_{\delta\, M\times B}^{\psi_k}\bigl(\Gamma_f(S)\bigr),\quad {\rm where}\\
\\
\displaystyle  \mathcal H_{\delta\, M\times B}^{\psi_k}\bigl(\Gamma_f(S)\bigr)=\inf\left\{\sum_{j=1}^\infty\psi_k({\rm diam}(U_i))\, :\, \Gamma_f(S)\subset\bigcup_{i=1}^\infty U_i,\, \forall i\ {\rm diam}(U_i)\le\delta\right\}.
\end{array}
\]
Since, due to \eqref{eq1.6}, 
\[
\lim_{i\rightarrow\infty}\max_{1\le j\le c_i}\bigl\{{\rm diam}(W_{ij})\bigr\}\le\lim_{i\rightarrow\infty}2\mathcal M_{V_{n_i}}(S)\cdot\gamma_{n_i}\le \lim_{i\rightarrow\infty}2\mathcal M_{V_{n_i}}(S)\cdot\rho^{\frac{1}{\varphi(n_i)}}=0,
\]
inequality \eqref{eq3.3} implies that
\[
\mathcal H^{\psi_k}_{M\times B}\bigl(\Gamma_f(S)\bigr)\le C\cdot 2^k\cdot L(\rho')\quad {\rm for\ all}\quad \rho'\in (\rho,1).
\]
This completes the proof of the theorem.
\end{proof}

\subsection{Proof of Corollary \ref{cor1.10}}
\begin{proof}
(1) Since $f(S)$ is the image of $\Gamma_f(S)$ under the Lipschitz map $\pi_B: M\times B\rightarrow B$, $\pi_B(m,b)= b$, with the {\em Lipschitz constant} one, Theorem \ref{te1.9} implies
\[
\mathcal H^{\psi_k}_{B}\bigl(f(S)\bigr)=\mathcal H^{\psi_k}_{B}\bigl(\pi_B(\Gamma_f(S))\bigr)\le \mathcal H^{\psi_k}_{M\times B}\bigl(\Gamma_f(S)\bigr)<\infty.
\]
(2) The required statement follows straightforwardly from the next general result.

{\em Let $X$ be a compact metric space and let $d\in\mathbb N$. Let $\psi, \sigma$ be continuous gauge functions such that $\sigma(t)=\psi(t)\cdot t^d$, $t\in (0,\infty)$. If $g:X\rightarrow\mathbb R^d$ is Lipschitz with the Lipschitz constant $Lip(g)$, then
\begin{equation}\label{eq3.4}
\int_{\mathbb R^d}\mathcal H_X^{\psi}\bigl(g^{-1}(y)\bigr)d\lambda^d(y)\le c(d)\cdot\bigl(Lip(g)\bigr)^d\cdot\mathcal H_X^\sigma (X);
\end{equation}
here $c(d)$ is a constant depending on $d$ only and $\lambda^d$ is Lebesgue measure on $\mathbb R^d$.}

(For the proof, see, e.g., \cite[Th.\,7.7]{Mat} or 
\cite[Prop.\,7.9]{F} with natural modifications.) 

Now, to obtain our result we choose $X=\Gamma_f(S)$, $\psi=\psi_{k,d}$, $\sigma=\psi_{k}$ and $g=\pi_B$.  Then \eqref{eq3.4} and Theorem \ref{te1.9} imply that
\[
\int_{\mathbb R^d}\mathcal H_{M\times B}^{\psi_{k,d}}\bigl(g^{-1}(y)\bigr)d\lambda^d(y)<\infty,
\]
i.e., $\mathcal H_{M\times B}^{\psi_{k,d}}\bigl(g^{-1}(y)\bigr)<\infty$ a.e. $y\in\mathbb R^d$. Finally, $f^{-1}(y)\cap S$ is the image of $g^{-1}(y)$ under the Lipschitz map $\pi_S:\Gamma_f(S)\rightarrow S$, $\pi_S(m,b)=m$, with the Lipschitz constant $\le 1$. Thus,
\[
\mathcal H_{M}^{\psi_{k,d}}\bigl(f^{-1}(y)\cap S\bigr)=\mathcal H_{M}^{\psi_{k,d}}\bigl(\pi_M(g^{-1}(y))\bigr)\le \mathcal H_{M\times B}^{\psi_{k,d}}\bigl(g^{-1}(y)\bigr)<\infty\quad  a.e.\quad y\in\mathbb R^d,
\]
as required.
\end{proof}
\subsection{Proof of Theorem \ref{te1.13}}
\begin{proof}
The proof is based on the following result.
\begin{Lm}\label{lem3.1}
Suppose $f\in \mathfrak{B}^*((A,V);B;\alpha)$. Then for $\psi$ satisfying \eqref{eq1.9} and each sequence $\{b_n\}_{n\in\mathbb N}\subset (0,\infty)$ such that series $\sum_{n=1}^\infty \frac{\ln b_n}{n^2}$ converges,  
\[
\varliminf_{n\rightarrow\infty} n^s\cdot \psi\bigl(b_n\cdot E_n(f)\bigr)=0.
\]
\end{Lm}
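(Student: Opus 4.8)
The plan is to argue by contradiction, reducing everything to the one divergence that defines membership in $\mathfrak{B}^*$. Put $t_n:=b_n\cdot E_n(f)$. We may assume $f\notin V\otimes B$ (otherwise $E_n(f)=0$ for large $n$, so $t_n=0$ and the assertion is trivial since $\psi(0)=0$), so all $t_n>0$. Since $f\in\mathfrak{B}^*$ means $\prod_{n=1}^\infty (E_n(f))^{1/n^2}=0$, equivalently $\sum_{n=1}^N\frac{\ln E_n(f)}{n^2}\to-\infty$, and since the partial sums of the convergent series $\sum_n\frac{\ln b_n}{n^2}$ are bounded, we get $\sum_{n=1}^N\frac{\ln t_n}{n^2}\to-\infty$. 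Now suppose, contrary to the assertion, that $\varliminf_{n\to\infty}n^s\psi(t_n)>0$; then $\psi(t_n)\ge c\,n^{-s}$ for some $c>0$ and all $n\ge N_0$. As $\psi$ is continuous and nondecreasing with $\psi(0)=0$, one has $\psi(t)\ge y\iff t\ge\psi^{-1}(y)$ for $y>0$, where $\psi^{-1}(y):=\inf\{t\ge0:\psi(t)\ge y\}$; hence $t_n\ge\psi^{-1}(c\,n^{-s})=:\tau_n$ and $\ln t_n\ge\ln\tau_n$ for $n\ge N_0$.

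The crux is the estimate $\sum_{n}\frac{-\ln\tau_n}{n^2}<\infty$. Granting it, $\sum_{n=N_0}^N\frac{\ln\tau_n}{n^2}$ is bounded below uniformly in $N$, hence so is $\sum_{n=1}^N\frac{\ln t_n}{n^2}=\sum_{n=1}^{N_0-1}\frac{\ln t_n}{n^2}+\sum_{n=N_0}^N\frac{\ln t_n}{n^2}\ge\text{const}+\sum_{n=N_0}^N\frac{\ln\tau_n}{n^2}$, contradicting $\sum_{n=1}^N\frac{\ln t_n}{n^2}\to-\infty$; this forces $\varliminf_n n^s\psi(t_n)=0$. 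To prove the estimate I would pass to the variable $t=e^{-u}$: condition \eqref{eq1.9} becomes $\int_0^\infty g(u)\,du<\infty$, where $g(u):=\psi(e^{-u})^{1/s}$ is nonincreasing, nonnegative, and tends to $0$ (we may assume $\psi>0$ on $(0,\infty)$, the degenerate case being covered by the boundedness argument above). A direct check using the equivalence above shows $\{u\ge0:g(u)\ge c^{1/s}n^{-1}\}=[0,w_n]$ with $w_n:=-\ln\tau_n$, so $\{w_n\}$ is nondecreasing, $w_n\to\infty$, and $g\ge c^{1/s}(n+1)^{-1}$ on $[w_n,w_{n+1})$. Integrating $g$ over the disjoint intervals $[w_n,w_{n+1})$, whose union is $[w_{N_0},\infty)$, gives $\sum_{n\ge N_0}\frac{w_{n+1}-w_n}{n+1}\le c^{-1/s}\int_0^\infty g<\infty$. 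Finally, writing $w_n=w_{N_0}+\sum_{k=N_0}^{n-1}(w_{k+1}-w_k)$, interchanging the order of summation, and using $\sum_{m>k}m^{-2}<k^{-1}\le 2(k+1)^{-1}$, one obtains $\sum_{n\ge N_0}\frac{w_n}{n^2}\le\frac{2w_{N_0}}{N_0}+2\sum_{k\ge N_0}\frac{w_{k+1}-w_k}{k+1}<\infty$, which is the estimate since $w_n=-\ln\tau_n$.

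The only genuine obstacle is this last estimate, i.e. converting the integrability of $\psi^{1/s}$ near $0$ afforded by \eqref{eq1.9} into the summability of $\frac{-\ln\psi^{-1}(c\,n^{-s})}{n^2}$: the logarithmic substitution and the generalized-inverse bookkeeping must be arranged so that no regularity of $\psi$ beyond continuity and monotonicity is used, with care that $\psi^{-1}$ be interpreted correctly on any interval where $\psi$ is constant. Everything else — the passage to $t_n$, the divergence of $\sum\frac{\ln t_n}{n^2}$, and the final summation by parts — is routine.
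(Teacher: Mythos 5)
Your proof is correct, and its overall skeleton coincides with the paper's: argue by contradiction from $\varliminf_n n^s\psi(b_nE_n(f))>0$, invert $\psi$ to get the lower bound $E_n(f)\ge b_n^{-1}\psi^{-1}(c\,n^{-s})$, and show that \eqref{eq1.9} forces $\sum_n n^{-2}\bigl|\ln\psi^{-1}(c\,n^{-s})\bigr|<\infty$, contradicting the divergence \eqref{beurling} defining $\mathfrak{B}^*$. Where you diverge is in how that central summability estimate is extracted from \eqref{eq1.9}. The paper compares the sum to the integral $\int_0^1\ln\bigl(\psi^{-1}(ct^s)\bigr)\,dt$ via $n^{-2}\le\frac{1}{n-1}-\frac1n$, then substitutes $t=c^{-1}(\psi(y))^{1/s}$ and integrates by parts, landing on $\int_0^{\cdot}\frac{(\psi(y))^{1/s}}{y}\,dy$ directly; this is shorter but tacitly writes $\bigl(\psi^{1/s}\bigr)'(y)\,dy$ and uses a genuine inverse, so it leans on $\psi$ being strictly increasing (as in the hypotheses of Theorem \ref{te1.13}) and on the substitution/integration by parts being legitimate for a merely continuous monotone $\psi$ (a Stieltjes reading fixes this). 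Your route --- the logarithmic change of variable $t=e^{-u}$, the level-set identity $\{u:g(u)\ge c^{1/s}n^{-1}\}=[0,w_n]$ with $w_n=-\ln\psi^{-1}(c\,n^{-s})$, and Abel summation of $\sum w_n/n^2$ against $\sum(w_{k+1}-w_k)/(k+1)\le c^{-1/s}\int_0^\infty g$ --- avoids differentiating $\psi^{1/s}$ altogether and works with the generalized inverse, so it is marginally more robust under the bare hypotheses ``continuous, nondecreasing, $\psi(0)=0$''; the price is the extra bookkeeping you already flag (discarding the finitely many $n$ with $w_n<0$, and the degenerate case $\psi\equiv0$ near $0$, which you correctly dispose of since $\psi(t_n)>0$ then forces $t_n$ bounded away from $0$). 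Both computations yield the same bound and either completes the lemma.
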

\begin{proof}
Assuming the contrary we obtain for some $c>0$,
\[
n^s\cdot \psi\bigl(b_n\cdot E_n(f)\bigr)\ge c.
\]
Since $\psi$ is continuous and increasing, the previous inequality implies
\[
\frac{\ln\bigl(E_n(f)\bigr)}{n^2}\ge \frac{\ln\left(\psi^{-1}\left(\frac{c}{n^s}\right)\right)-\ln b_n}{n^2}.
\]
Without loss of generality we may assume that $c$ is so small that $\psi^{-1}\left(\frac{c}{n^s}\right)<1$ for all $n\ge 1$; then since $\psi^{-1}$ increases,
\[
\begin{array}{l}
\displaystyle
\sum_{n=2}^\infty\frac{\ln\left(\psi^{-1}\left(\frac{c}{n^s}\right)\right)}{n^2}\ge \sum_{n=2}^\infty\ln\left(\psi^{-1}\left(\frac{c}{n^s}\right)\right)\cdot\left(\frac{1}{n-1}-\frac{1}{n}\right)\ge\int_0^1\ln\bigl(\psi^{-1}(ct^s)\bigr)\, dt\\
\\
\displaystyle =\frac{1}{c}\cdot \int_0^{\psi^{-1}(c^s)} \ln y\cdot \bigl(\psi^{\frac 1s}\bigr)'(y) dy=\ln\bigl(\psi^{-1}(c^s)\bigr)-\frac{1}{c}\cdot\int_0^{\psi^{-1}(c^s)}\frac{\bigl(\psi(y)\bigr)^{\frac 1s}}{y}\, dy>-\infty .
\end{array}
\]
In this formula we used the substitution $t=\frac{(\psi(y))^{\frac 1s}}{c}$ and then integration by parts where $\lim_{y\rightarrow 0^+}\ln y\cdot (\psi(y))^{\frac 1s}=0$ due to condition \eqref{eq1.9}.

Therefore we obtain
\[
\sum_{n=1}^\infty\frac{\ln\bigl(E_n(f)\bigr)}{n^2}>-\infty.
\]
This contradicts the definition of class $\mathfrak B^*((A,V);B;\alpha)$, see \eqref{beurling}.
\end{proof}

Now, for $f\in \mathfrak{B}^*((A,V);B;\alpha)$ we choose a sequence $\{f_n\}_{i\in\mathbb N}$, $f_n\in V_n\otimes B$, such that $\gamma_n:=\sup_{s\in S}\|f(s)-f_n(s)\|_B\le 2 E_n(f)$ (cf. the proof of Theorem \ref{te1.9}). Then we apply Lemma \ref{lem3.1} with $b_n=4 n^s$, $n\in\mathbb N$, to find a subsequence $\{n_i\}_{i\in\mathbb N}\subset\mathbb N$ such that 
\begin{equation}\label{eq3.7}
\lim_{i\rightarrow\infty} n_i^s\cdot \psi\bigl(4 n_i^s\cdot E_{n_i}(f)\bigr)=0.
\end{equation}
The rest of the proof repeats word-for-word the proofs of Theorem \ref{te1.9} and Corollary \ref{cor1.10} starting from formula \eqref{eq3.1}, where instead of \eqref{long} we have, in notation of these results,
\[
\begin{array}{l}
\sum_{j=1}^{c_i} \tilde\psi_k\bigl({\rm diam}(W_{ij})\bigr)\\
\medskip
\displaystyle \le c_i\cdot\tilde \psi_k(2\mathcal M_{V_{n_i}}(S)\cdot\gamma_{n_i})\le C\cdot\left(\frac{1}{\gamma_{n_i}}\right)^k\cdot \bigl((2 n_i^s\cdot\gamma_{n_i})\cdot\psi\bigl(4n_i^s\cdot E_{n_i}(f)\bigl)\bigr)^{k}\\
\medskip
\displaystyle 
=C\cdot \bigl(2 n_i^s\cdot\psi\bigl(4n_i^s\cdot E_{n_i}(f)\bigl)\bigr)^{k}\rightarrow 0\quad {\rm as}\quad i\rightarrow\infty.
\end{array}
\]
We leave details to the reader.
\end{proof}

\subsection{Proof of Corollary \ref{cor2.16}}
\begin{proof}
Let $B\Subset\mathbb C^{N}$ be an open Euclidean ball containing $K$. For a compact set $C\subset B$ by ${\rm cap}(C;B)$ we denote the capacity of $S$ relative to $B$ determined in terms of the Monge-Ampere operator (see \cite{BT}). Since $f\in C(K)$, the set function ${\rm range}(f)\ni c\mapsto f^{-1}(c)$ is upper-semicontinuous with respect to convergence in the Hausdorff metric on compact subsets of $\mathbb C^{N}$ meaning that if a sequence $\{c_n\}\subset {\rm range}(f)$ converges to $c$, then any convergent subsequence of the sequence $\{f^{-1}(c_n)\}$ converges to a compact subset of $f^{-1}(c)$. This and the Choquet capacitary property for a decreasing sequence of compact sets imply that function $C_f(z):={\rm cap}(f^{-1}(z);B)$, $z\in {\rm range}(f)$, is upper-semicontinuous. In particular, for every $\varepsilon>0$, set $S_\varepsilon :=\{z\in {\rm range}(f)\, :\, C_f(z)\ge\varepsilon\}$ is compact, and each  $f^{-1}(z)\subset K$, $z\in S_\varepsilon$, is non-pluripolar. Suppose that $S_\varepsilon$ is non-polar. Then set $S_\varepsilon\times\mathbb C\subset\mathbb C^{N+1}$ is non-pluripolar. According to Proposition \ref{clp}, there exists a non-identically $-\infty$ plurisubharmonic function $u$ on $\mathbb C^{N+1}$ such that $u|_{\Gamma_f}=-\infty$. Since each set $f^{-1}(z)\times\{z\}$, $z\in S_\varepsilon$, is non-pluripolar in $\mathbb C^n\times\{z\}$, $u|_{\mathbb C^N\times\{z\}}=-\infty$ for all $z\in S_\varepsilon$. Hence, $u=-\infty$ on non-pluripolar set $S_\varepsilon\times\mathbb C$ and therefore it must be $-\infty$ everywhere, a contradiction showing that $S_\varepsilon$ is polar for each $\varepsilon>0$. We set 
\[
S_f:=\bigcup_{n=1}^\infty S_{\frac 1n}.
\]
Then $S_f\subset\mathbb C$ is a polar set and for each $c\in {\rm range}(f)\setminus S_f$,
${\rm cap}(f^{-1}(c);B)=0$, that is, $f^{-1}(c)$ is pluripolar, as required.
\end{proof}

\end{document}